\newtheorem{theorem}{Theorem}[section]
\newtheorem{lemma}[theorem]{Lemma}
\newtheorem{prop}[theorem]{Proposition}
\newtheorem{remark}[theorem]{Remark}
\numberwithin{equation}{section}
\newcommand{\set}[1]{\{ #1 \}} 
\newcommand{\setst}[2]{\{ #1 \mid #2 \}} 
\newcommand{\abs}[1]{| #1 |}
\newcommand{\val}[1]{\mathop{\rm val}\nolimits ( #1 )}
\newcommand{\supp}[1]{\mathop{\textrm{supp}}\nolimits (#1)}
\providecommand{\R}{\mathbb{R}}
\providecommand{\Z}{\mathbb{Z}}
\providecommand{\Q}{\mathbb{Q}}
\renewcommand{\vec}[1]{\overrightarrow{#1}}
\renewcommand{\phi}{\varphi}
\renewcommand{\tilde}{\widetilde}
\renewcommand{\hat}{\widehat}
\renewcommand{\bar}{\overline}
\renewcommand{\div}{\mathop{\rm div}\nolimits}
\newcommand{\eps}{\varepsilon}
\newcommand{\calB}{\mathcal{B}}
\newcommand{\calP}{\mathcal{P}}
\newcommand{\dist}{{\rm dist}}
\newcommand{\deltain}{\delta^{\rm in}}
\newcommand{\deltaout}{\delta^{\rm out}}
\newcommand{\qed}{\hfill$\square$}
\newcommand{\bvec}{\overleftarrow}
\newenvironment{proof}{\par\noindent%
{\bf Proof.~\nopagebreak}}{\nobreak\enskip\qed\par\medskip}
\newenvironment{numitem}{%
   \refstepcounter{equation}%
   \begin{enumerate} \compact%
      \item[(\theequation)]
}{%
   \end{enumerate}
}
\newenvironment{nameditem}[1]{%
   \begin{itemize}%
      \rm \item[(#1)] \em%
}{%
   \end{itemize}%
}
\newcommand{\refeq}[1]{(\ref{eq:#1})} 
\newcommand{\reffig}[1]{Fig.~\ref{fig:#1}} 
\newcommand{\refprop}[1]{Proposition~\ref{prop:#1}} 
\newcommand{\refth}[1]{Theorem~\ref{th:#1}} 
\newcommand{\reflm}[1]{Lemma~\ref{lm:#1}} 
\newcommand{\refsec}[1]{Section~\ref{sec:#1}} 
\newcommand{\refssec}[1]{Subsection~\ref{ssec:#1}} 
\newcommand{\refrem}[1]{Remark~\ref{rem:#1}} 
\newcommand{\ncprob}{$N$}
\newcommand{\ncpprob}{$N_\lambda$}
\newcommand{\ecprob}{$E$}
\newcommand{\dualpprob}{$D_\lambda$}
\newcommand{\Xcomment}[1]{}
\begin{document}

\title
{
    Min-Cost Multiflows in \\
    Node-Capacitated Undirected Networks
}

\author
{
    Maxim A. Babenko
    \thanks {
        Department of Mechanics and Mathematics, Moscow State University;
        Leninskie Gory, 119991 Moscow, Russia;
        email: \texttt{max@adde.math.msu.su}.
    }
    \and
    Alexander V. Karzanov
    \thanks {
        Institute for System Analysis of the RAS;
        9, Prospect 60 Let Oktyabrya, 117312 Moscow, Russia;
        email: \texttt{sasha@cs.isa.ru}
    }
}

\date{\today}

\maketitle

 \begin{abstract}
We consider an undirected graph $G = (VG, EG)$ with a set $T \subseteq VG$ of
terminals, and with nonnegative integer capacities $c(v)$ and costs $a(v)$ of
nodes $v\in VG$. A path in $G$ is a \emph{$T$-path} if its ends are distinct
terminals. By a \emph{multiflow} we mean a function $F$ assigning to each
$T$-path $P$ a nonnegative rational \emph{weight} $F(P)$, and a multiflow is
called \emph{feasible} if the sum of weights of $T$-paths through each node $v$
does not exceed $c(v)$. The \emph{value} of $F$ is the sum of weights $F(P)$,
and the \emph{cost} of $F$ is the sum of $F(P)$ times the cost of $P$ w.r.t.
$a$, over all $T$-paths $P$.

Generalizing known results on edge-capacitated multiflows, we show that the
problem of finding a minimum cost multiflow among the feasible multiflows
of maximum possible value admits \emph{half-integer} optimal primal and dual
solutions. Moreover, we devise a strongly polynomial algorithm for finding such
optimal solutions.
 \end{abstract}

\bigskip
\noindent \emph{Keywords}: minimum cost multiflow, bidirected graph,
skew-symmetric graph

\newpage

\section{Introduction}

\subsection{Multiflows}

For a function $\phi \colon X \to \R_+$ and a subset $A \subseteq X$, we write
$\phi(A)$ to denote $\sum_{x \in A} \phi(x)$. The \emph{incidence vector} of
$A$ in $\R^X$ is denoted by~$\chi^A$, i.e. $\chi^A(e)$ is 1 for $e \in A$ and 0
for $e\in X - A$ (usually $X$ will be clear from the context). When $A$ is a
multiset, $\chi^A(e)$ denotes the number of occurrences of $e$ in $A$.

In an undirected graph $G$, the sets of nodes and edges are denoted by $VG$ and
$EG$, respectively. When $G$ is a directed graph, we speak of arcs rather than
edges and write $AG$ instead of $EG$. A similar notation is used for paths,
cycles, and etc.

A \emph{walk} in $G$ is meant to be a sequence $(v_0,e_1,v_1, \ldots,e_k,v_k)$,
where each $e_i$ is an edge (or arc) and $v_{i-1},v_i$ are its endnodes; when
$G$ is a digraph, $e_i$ is directed from $v_{i-1}$ to $v_i$. Edge-simple (or
arc-simple) walks are called \emph{paths}.

We consider an undirected graph~$G$ and a distinguished subset $T \subseteq VG$
of nodes, called \emph{terminals}. Nodes in $VG - T$ are called \emph{inner}. A
\emph{$T$-path} is a path $P$ in $G$ whose endnodes are distinct terminals; we
usually assume that all the other nodes of $P$ are inner. The set of $T$-paths
is denoted by $\calP$. A \emph{multiflow} is a function $F \colon \calP \to
\Q_+$. Equivalently, one may think of $F$ as a collection
 \begin{equation}
\label{eq:mf_coll}
    \set{(\alpha_1, P_1), \ldots, (\alpha_n, P_m)}
\end{equation}
(for some $m$), where the $P_i$ are $T$-paths and the $\alpha_i$ are
nonnegative rationals, called \emph{weights} of paths. Sometimes (e.g.,
in~\cite{IKN-98}) such a multiflow $F$ is called \emph{free} to emphasize that
\emph{all} pairs of distinct terminals are allowed to be connected by flows.
The \emph{value} $\val{F}$ of $F$ is $\sum_P F(P)$. For a node $v$, define
   \begin{equation}
\label{eq:node_load}
    \hat F(v) := \sum \left( F(P) \colon P \in \calP, v \in VP \right);
 \end{equation}
the function $\hat F$ on $VG$ is called the \emph{(node) load function}. Let $c
\colon VG \to \Z_+$ be a nonnegative integer function of \emph{node
capacities}. We say that $F$ is \emph{feasible} if $\hat F(v) \le c(v)$ for all
$v \in VG$.

Suppose we are given, in addition, a function $a \colon VG \to \Z_+$ of
\emph{node costs}. Then the \emph{cost} $a(F)$ of a multiflow $F$ is the sum
$\sum_P a(P)F(P)$, where $a(P)$ stands for the cost $a(VP)$ of a path $P$.

In this paper we consider the following problem:
 \begin{nameditem}{\ncprob}
Given $G, T, c, a$ as above, find a multiflow~$F$ of minimum possible cost
$a(F)$ among all feasible multiflows of maximum possible value.
 \end{nameditem}

\subsection{Previous results}

When $\abs{T} = 2$, ~(\ncprob) turns into the undirected min-cost max-flow
problem under node capacities and costs, having a variety of applications; see,
e.g.,~\cite{FF-62,law-76}. It admits integer optimal primal and dual
solutions~\cite{FF-62}.

In the special case $a\equiv 0$, we are looking simply for a feasible
multiflow of maximum value. Such a problem has half-integer optimal primal and
dual solutions, due to results of Pap~\cite{pap-07} and Vazirani~\cite{vaz-01},
respectively. Also it is shown in~\cite{pap-07} that the problem is solvable in
strongly polynomial time by using the ellipsoid method.

An edge-capacitated version of (\ncprob) has been well studied. In this
version, denoted by~(\ecprob), $c$ and $a$ are functions on $EG$ rather than $VG$.
For a multiflow~$F$, its \emph{edge load function} is defined similarly to \refeq{node_load}:
 \begin{equation}
\label{eq:edge_load}
    \hat F(e) := \sum \left( F(P) \colon P \in \calP, e \in EP \right)
    \quad \mbox{for all $e \in EG$},
 \end{equation}
and its cost is defined to be $\sum_P a(EP)F(P)$. Problem~(\ecprob) is reduced
to (\ncprob) by adding an auxiliary node on each edge, but no converse
reduction is known.

An old result is that (\ecprob) has a half-integer optimal
solution~\cite{kar-79}. Also it is shown in~\cite{kar-94} that~(\ecprob) has a
half-integer optimal dual solution and that half-integer primal and dual
optimal solutions can be found in strongly polynomial time by using the
ellipsoid method. A ``purely combinatorial'' weakly polynomial algorithm, based
on cost and capacity scaling, is devised in~\cite{GK-97}.

In the special case of (\ecprob) with $a\equiv 0$, the half-integrality results
are due to Lov\'asz~\cite{lov-76} and Cherkassky~\cite{cher-77}, and a strongly
polynomial combinatorial algorithm is given in~\cite{cher-77} (see
also~\cite{IKN-98} for faster algorithms).

\subsection{New results}

In this paper we prove that (\ncprob) always admits a half-integer optimal
primal and dual solutions. In particular, this implies all half-integrality
results mentioned in the previous subsection.

Similar to \cite{kar-94}, we introduce a \emph{parametric} generalization of
(\ncprob), study properties of geodesics (shortest $T$-paths with respect to some
length function), and reduce the parametric problem to a certain
single-commodity flow problem. However, the details of this construction are
more involved. In particular, the reduced problem concerns integer flows in a
\emph{bidirected} graph.

The second goal is to explore the complexity of (\ncprob). We show that
half-integer optimal primal and dual solutions to the parametric problem (and
therefore to~(\ncprob)) can be found in strongly polynomial time by using the
ellipsoid method.

\section{Preliminaries}
\label{sec:prelim}

\subsection{Parametric problem and its dual}

Instead of (\ncprob), it is convenient to consider a more general problem,
namely:
\begin{nameditem}{\ncpprob}
    Given $G, T, c, a$ as in (\ncprob) and, in addition, $\lambda \in \Z_+$,
    find a feasible multiflow~$F$ maximizing the
    objective function $\Phi(F,a,\lambda) := \lambda \cdot \val{F} - a(F)$.
 \end{nameditem}
We will prove the following
 \begin{theorem}
\label{th:primal_halfint}
    For any $\lambda \in \Z_+$, problem (\ncpprob) has a half-integer optimal solution.
 \end{theorem}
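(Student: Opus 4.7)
The plan is to proceed in four steps: set up the LP dual of (\ncpprob), use it to extract a length function $\ell$ whose geodesics carry any optimum, reduce the resulting restricted problem to a single-commodity integer flow problem in an auxiliary bidirected graph, and finally invoke the classical integrality theorem for bidirected flows.

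First, I would rewrite (\ncpprob) as the linear program $\max \sum_{P \in \calP} (\lambda - a(VP)) F(P)$ over $F \geq 0$ subject to $\sum_{P \ni v} F(P) \leq c(v)$ for $v \in VG$. Its dual has nonnegative variables $y(v)$ with constraints $\sum_{v \in VP} (y(v) + a(v)) \geq \lambda$ for every $P \in \calP$ and objective $\min \sum_v c(v) y(v)$. Setting $\ell := y + a$, the dual requires $\ell(VP) \geq \lambda$ for every $T$-path, and complementary slackness forces any optimal primal $F$ to be supported on $T$-paths with $\ell(VP) = \lambda$, i.e.\ on $\ell$-geodesics.

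Next, I would fix an optimal dual $y^*$ (which exists since the LP is solvable) and set $\ell^* := y^* + a$. The problem becomes: maximize $\val{F}$ over feasible multiflows $F$ supported on $\ell^*$-geodesic $T$-paths of length exactly $\lambda$. The crucial geometric observation, to be established via a careful analysis of shortest $T$-paths, is that the set $G^*$ of edges lying on such geodesics carries a natural orientation induced by the level function $v \mapsto \dist_{\ell^*}(v, T)$; at each inner node the incident $G^*$-edges split into ``descending'' and ``ascending'' families through which every geodesic must pass coherently.

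Using this local structure, I would build an auxiliary bidirected (equivalently, skew-symmetric) graph $H$ by a node-splitting construction: each inner $v \in VG$ is replaced by a gadget that enforces the capacity $c(v)$ and respects the ascending/descending partition, while each terminal is attached via a bidirected half-edge, so that a $T$-path corresponds to a single-commodity flow running between two ``sources''. The encoding is designed so that half-integer feasible multiflows in $G$ supported on $\ell^*$-geodesics correspond bijectively to nonnegative integer feasible flows in $H$, with the objective preserved after rescaling; the factor $\tfrac12$ arises because each $T$-path is accounted for at both of its terminal endpoints in the bidirected model.

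Finally, by the classical Edmonds--Johnson integrality theorem for flows in bidirected graphs, the auxiliary problem in $H$ admits an integer optimum; lifting it back to $G$ yields the desired half-integer optimal solution of (\ncpprob). I expect the principal obstacle to be in the third step: constructing $H$ so that the correspondence with geodesic multiflows is genuinely bijective and preserves both feasibility and value exactly. This rests on the rigid local structure of geodesics at inner nodes and on the correct handling of the two-sided behaviour of $T$-paths at terminals, both of which must be verified in detail before the bidirected reduction can be invoked.
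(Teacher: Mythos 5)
Your strategy matches the paper's at a high level: move to the LP dual, extract the geodesic length $\ell$, reduce to a single-commodity problem in a bidirected graph built from the geodesic structure, and invoke integer flow theory for bidirected graphs. But there is a genuine gap in the last two steps, and it is precisely where the difficulty of the theorem lives.

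First, after fixing the optimal dual $l$, the restricted problem is \emph{not} ``maximize $\val F$ over feasible geodesic multiflows''. Complementary slackness says the optimal primal solutions are exactly the feasible geodesic multiflows that \emph{saturate every node $v$ with $l(v)>0$}; the value $\val F$ plays no role after $l$ is fixed. This saturation requirement turns into lower bounds (the ``locked'' edges) in the auxiliary bidirected graph $H$, and that is what makes the flow problem nontrivial. Second, geodesic multiflows do not correspond \emph{bijectively} to bidirected $q$-flows: at the central nodes the gadget must carry a ``good'' flow (condition \refeq{good}), and one has to arrange the construction — the paper replicates gadgets into unit-capacity copies — so that every integer feasible flow is automatically good. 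Third, and most importantly, there is no off-the-shelf ``classical integrality theorem'' that closes the argument. Bidirected flow polyhedra are only half-integral in general, and after doubling the node capacities the expensive graph still has odd (unit) capacities on the gadget edges, so the Edmonds--Johnson parity condition does not simply kick in. The paper's \refprop{aux_bidir_flow} — that an \emph{integer} good $q$-flow exists saturating the locked edges — is proved by invoking Tutte's max IBD-flow / min odd-barrier theorem, constructing the canonical odd barrier, and showing (Lemma~\ref{lm:b_sets}) that each barrier component $B_p$ is a single replicated gadget node with exactly one leg leaving $A$; only then does the goodness of the known fractional flow yield the counting inequality $\val f \le \val g$. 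None of this is subsumed by the cited black box, so as written the proposal stops short exactly where the theorem needs work.
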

(Note that $\Phi(F, qa, q\lambda)= q \cdot \Phi(F,a,\lambda)$ for any multiflow
$F$ and $q\in\Q_+$. Therefore, the optimality of a multiflow in the parametric
problem preserves when both $a$ and $\lambda$ are multiplied by the same
positive factor $q$. This implies that the theorem is generalized to arbitrary
$a:VG\to\Q_+$ and $\lambda\in\Q_+$ (but keeping the integrality of $c$).
However, we prefer to deal with integer-valued $a$ and $\lambda$ in what
follows.)

By standard linear programming arguments, (\ncprob) and (\ncpprob) become
equivalent when $\lambda$ is large enough (moreover, the existence of a
half-integer optimal solution for (\ncpprob) easily implies that taking
$\lambda := 2c(VG)a(VG) + 1$ is sufficient).

Problem (\ncpprob) can be viewed as a linear program with variables $F(P) \in
\Q_+$ assigned to $T$-paths $P$. Assign to a node $v \in VG$ a variable $l(v)
\in \Q_+$. Then the linear program dual to (\ncpprob) is:
\begin{nameditem}{\dualpprob}
    Minimize $c \cdot l$ provided that the following
    holds for every $T$-path~$P$:
    \begin{equation}
    \label{eq:dual_ineq}
        l(P) \ge \lambda - a(P).
    \end{equation}
\end{nameditem}

\subsection{Translating to edge lengths}

The above dual problem (\dualpprob) involves lengths of paths (namely, $l(P)$
and $a(P)$) determined by ``lengths'' of nodes ($l$ and $a$, respectively). It
is useful to transform lengths of nodes into lengths of edges. To do so, for $w
\colon VG \to \Q_+$, we define the function $\bar w$ on $EG$ by
  \begin{equation}  \label{eq:wbar}
  \bar w(e):=\alpha_u w(u)+\alpha_v w(v)\qquad \mbox{for $e=uv\in EG$},
   \end{equation}
where $\alpha_x:=\frac12$ if $x \in VG-T$, and $\alpha_x := 1$ if $x \in T$. This provides
the correspondence
\begin{equation}
    \label{eq:node_to_edge}
    w(P) = \bar w(P) \qquad \mbox{for each $T$-path $P$}
\end{equation}
(where $w(P)$ stands for $w(VP)$, and $\bar w(P)$ for $\bar w(EP)$). For $a,l$
as above, define
  \begin{equation} \label{eq:ell}
  \ell :=\bar l + \bar a.
  \end{equation}
Let $\dist_\ell(u,v)$ denote the $\ell$-distance between vertices $u$ and $v$,
i.e. the minimum $\ell$-length $\ell(P)$ of a $u$--$v$ path $P$ in $G$. Then,
in view of~\refeq{node_to_edge} and~\refeq{ell}, the constraints in
(\dualpprob) can be rewritten as
\begin{equation}
    \label{eq:dual_dist}
    \dist_\ell(s,t) \ge \lambda \qquad \mbox{for all $s, t \in T$, $s \ne t$.}
\end{equation}

By the linear programming duality theorem applied to (\ncpprob) and (\dualpprob), a
feasible multiflow $F$ and a function $l \colon VG \to \Q_+$ satisfying
\refeq{dual_dist} are optimal solutions to (\ncpprob) and (\dualpprob),
respectively, if and only if the following (complementary slackness conditions)
hold:
\begin{numitem}
    \label{eq:cs_geodesic}
    if $P$ is a $T$-path and $F(P) > 0$, then $\ell(P) = \lambda$; in particular,
    $P$ is $\ell$-shortest;
\end{numitem}
\begin{numitem}
    \label{eq:cs_saturated}
    if $v \in VG$ and $l(v) > 0$, then $v$ is \emph{saturated} by $F$, i.e. $\hat F(v) = c(v)$.
\end{numitem}

In the rest of the paper, to simplify technical details, we will always assume
that the input costs~$a$ of all nodes are \emph{strictly positive}. Then the
edge lengths $\ell$ defined by~\refeq{ell} are strictly positive as well. This
assumption will not lead to loss of generality in essence, since the desired
results for a nonnegative input cost function~$a$ can be obtained by applying a
perturbation technique in spirit of~\cite[pp.~320--321]{kar-94} (by replacing
$a$ by an appropriate strictly positive cost function).

\subsection{Geodesics}
\label{ssec:geodesics}

Condition \refeq{cs_geodesic} motivates the study of the structure of
$\ell$-shortest $T$-paths in~$G$. To this aim,
set $p := \min \setst{\dist_\ell(s,t)}{s, t \in T, s \ne t}$.
A $T$-path $P$ such that $\ell(P) = p$ is called an \emph{$\ell$-geodesic} (or just \emph{geodesic} if $\ell$ is clear form the context).
When a multiflow $F$ in $G$ is given
as a collection~\refeq{mf_coll} in which all paths $P_i$ are $\ell$-geodesics,
we say that $F$ is an \emph{$\ell$-geodesic} multiflow.

Next we utilize one construction from \cite{kar-79,kar-94}, with minor changes.
Consider a node $v \in VG$. Define the \emph{potential} $\pi(v)$ to be the
$\ell$-distance from $v$ to the nearest terminal, i.e. $\pi(v) := \min
\setst{\dist_\ell(v,t)}{t \in T}$. Set $VG_\ell := \setst{v \in VG}{\pi(v) \le
\frac12 p}$ (in particular, $T \subseteq VG_\ell$). For $s \in T$, define $V^s
:= \setst{v \in VG}{\dist_\ell(s,v) < \frac12 p}$. Also define $V^\natural :=
\setst{v \in VG}{\pi(v) = \frac12 p}$. We refer to $V^s$ as the \emph{zone} of
a terminal $s\in T$, and to $V^\natural$ as the set of \emph{central} nodes
(w.r.t. $\ell$). The sets $V^s$ ($s \in T$) and $V^\natural$ are pairwise
disjoint and give a partition of~$VG_\ell$.

The following subset of edges is of importance:
\begin{align*}
    EG_\ell :=
    \setst{uv \in EG}{ \exists\, s \in T\,\colon\, u \in V^s, v \in V^s \cup V^\natural,
    \abs{\pi(u) - \pi(v)} = \ell(uv)}  \\
    \cup \setst{uv \in EG}{ \exists\, s, t \in T, s \ne t\,\colon\,u \in V^s, v \in V^t,
    \pi(u) + \pi(v) + \ell(uv) = p}.
\end{align*}

One can see that the subgraph $G_\ell := (VG_\ell, EG_\ell)$ of $G$ contains
all $\ell$-geodesics. Moreover, a straightforward examination shows that the
structure of $\ell$-geodesics possesses the properties as expressed in the
following lemma (which is, in fact, a summary of Claims~1--3 from \cite{kar-94}
and uses the strict positivity of $\ell$).
\begin{lemma}
\label{lm:geodesics}
    Let $P$ be an $\ell$-geodesic running from $s\in T$ to $t\in T$.
    Then $P$ is contained in $G_\ell$ and exactly one of the following takes place:
    \begin{enumerate} \compact
        \item
        $P$ contains no central nodes and can be represented as the concatenation
        $P_1 \circ (u, e, v) \circ P_2$, where $u \in V^s$, $v \in V^t$, $s \ne t$, and
        $e \in EG_\ell$.

        \item
        $P$ contains exactly one central node $w \in V^\natural$ and can be
        represented as the concatenation
        $P = P_1 \circ (u, e_1, w, e_2, v) \circ P_2$, where $u\in V^s$, $v\in V^t$, $s \ne t$,
        and $e_1, e_2 \in EG_\ell$.
    \end{enumerate}
    In both cases, parts $P_1$ and $P_2$ are contained in the induced subgraphs $G_\ell[V^s]$
    and $G_\ell[V^t]$, respectively. The potentials $\pi$ are strictly increasing
    as we traverse $P_1$ from $s$ to $u$, and strictly decreasing as we traverse
    $P_2$ from $v$ to $t$.

    Conversely, any $T$-path in $G_\ell$ obeying the above properties is an
    $\ell$-geodesic.
\end{lemma}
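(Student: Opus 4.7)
The plan is to use the fact that $\ell > 0$ together with the global definition $p = \min_{s \ne t} \dist_\ell(s,t)$ to pin down, for each vertex on an $\ell$-geodesic $P$ from $s$ to $t$, exactly which zone it belongs to, via the value $\ell(P[s,v])$ of the prefix length. The basic tool is the triangle inequality and the observation that if a subpath of $P$ were replaceable by a strictly shorter walk, then $P$ would not be $\ell$-shortest, contradicting $\ell(P) = p$.

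First I would show that every vertex $v \in VP$ satisfies $\pi(v) \le p/2$. Since $\ell(P[s,v]) + \ell(P[v,t]) = \ell(P) = p$, one of the two summands is at most $p/2$; together with $\pi(v) \le \min(\dist_\ell(s,v), \dist_\ell(v,t))$ this gives $v \in VG_\ell$. Next I would classify the vertices of $P$ by their prefix length: if $\ell(P[s,v]) < p/2$ then $\dist_\ell(s,v) < p/2$, so $v \in V^s$; symmetrically, if $\ell(P[s,v]) > p/2$ then $v \in V^t$. The interesting case is $\ell(P[s,v]) = p/2$: I claim $v$ must be central. Assume otherwise, so $\pi(v) < p/2$ is attained at some terminal $r$. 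If $r = s$ (resp.\ $r = t$), the existence of a strictly shorter $s$-$v$ walk (resp.\ $v$-$t$ walk) contradicts the fact that $P$ itself realizes $\dist_\ell(s,t) = p$; if $r \ne s, t$ then $\dist_\ell(s, r) \le p/2 + \pi(v) < p$, contradicting the minimality of $p$ among distinct terminal pairs. Since $\ell > 0$, the quantity $\ell(P[s,v])$ is strictly monotone along $P$, so at most one central vertex can appear on $P$. This already produces the dichotomy (Case 1 or Case 2) of the lemma.

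I would then verify the edge structure. For any $u \in VP \cap V^s$, the fact that $\dist_\ell(s,s') \ge p$ for all $s' \in T \setminus \{s\}$ forces $\pi(u) = \dist_\ell(u,s)$, and then the ``no-shortcut'' argument gives $\pi(u) = \ell(P[s,u])$. Hence along the $V^s$-portion of $P$ the potential $\pi$ is strictly increasing with $\pi(v_{i+1}) - \pi(v_i) = \ell(v_i v_{i+1})$, placing each such edge in $EG_\ell$ via its first clause. The $V^t$-portion is symmetric. For the one or two ``middle'' edges (either the direct $V^s$-$V^t$ edge in Case~1 or the two edges incident to the central node in Case~2), I substitute the formulas $\pi(u) = \ell(P[s,u])$ and $\pi(v) = \ell(P[v,t])$ into $\ell(P) = p$ to verify the defining identities $\pi(u) + \pi(v) + \ell(uv) = p$ or $\abs{\pi(u) - \pi(v)} = \ell(uv)$, respectively. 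The converse direction is then a straightforward summation: given a $T$-path with the described structure in $G_\ell$, adding up edge lengths using the $EG_\ell$-identities gives total length exactly $p$.

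The main obstacle I anticipate is the middle case $\ell(P[s,v]) = p/2$: showing $v$ must be central rather than lying in some $V^{s'}$ for $s' \ne s,t$ is what actually uses the global minimality of $p$, as opposed to mere $s$-$t$ shortness of $P$. Once that is settled, the rest is bookkeeping with the prefix-length function and the two clauses defining $EG_\ell$.
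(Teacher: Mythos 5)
Your proof is correct, and it is precisely the ``straightforward examination'' the paper alludes to when it cites Claims~1--3 of \cite{kar-94} rather than giving a proof of its own: you classify each vertex $v$ of $P$ by the prefix length $\ell(P[s,v])$, show this equals $\pi(v)$ on the $V^s$-portion (and the suffix length equals $\pi$ on the $V^t$-portion) via the no-shortcut and global-minimality-of-$p$ arguments, invoke strict positivity of $\ell$ for monotonicity and uniqueness of the central vertex, and check each edge against the two defining clauses of $EG_\ell$. The key point you correctly single out as the crux --- that a vertex with prefix length exactly $p/2$ cannot lie in any zone $V^{s'}$, which genuinely needs $p = \min_{s'\ne t'}\dist_\ell(s',t')$ rather than mere $s$--$t$ shortness of $P$ --- is handled properly by the three-way split on the nearest terminal $r$, and the converse telescoping sum is right. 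No gaps.
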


\section{Primal half-integrality}

\subsection{Auxiliary bidirected graph}
\label{ssec:aux_graph}

In this subsection we introduce an auxiliary bidirected graph, which will be
the cornerstone of our approach both for proving half-integrality results and
for providing a polynomial-time algorithm.

Given $G$, $T$, $c$, $a$ and $\lambda$ as above, let $l$ be an optimal solution
to (\dualpprob). Form the edge lengths $\ell := \bar l + \bar a$, the potential
$\pi$, the subgraph $G_\ell$, and the sets $V^s$ ($s \in T$) and $V^\natural$,
as in \refssec{geodesics}. One may assume that $p: = \min
\setst{\dist_\ell(s,t)}{s, t \in T, s \ne t} = \lambda$ (since $p\ge \lambda$,
by~\refeq{dual_dist}, and if $p>\lambda$ then $F = 0$ is an optimal solution
to~(\ncpprob), by~\refeq{cs_geodesic}).

For further needs, we reset $c:=2c$, making all node capacities even integers.
Now our goal is to prove the existence of an \emph{integer} optimal multiflow
$F$ in problem (\ncpprob) (which is equivalent to proving the half-integrality
w.r.t. the initial $c$).
\medskip

Recall that in a \emph{bidirected} graph (or a \emph{BD-graph} for short) edges
of three types are allowed: a usual directed edge, or an \emph{arc}, that
leaves one node and enters another one; an edge directed \emph{from both} of
its ends; and an edge directed \emph{to both} of its ends
(cf.~\cite{EJ-70,sch-03}). When both ends of an edge coincide, the edge becomes
a loop. For our purposes we admit no loop entering and leaving its end node
simultaneously. Sometimes, to specify the direction of an edge $e=uv$ at one or
both of its ends, we will draw arrows above the corresponding node characters.
For example, we may write $\vec{uv}$ if $e$ is directed from $u$ to $v$ (a
usual arc), ${\vec u}\bvec{v}$ if $e$ leaves both $u,v$, $\bvec{u}\vec{v}$ if
$e$ enters both $u,v$, and $\vec{u}v$ if $e$ leaves $u$ (and either leaves or
enters $v$).

A \emph{walk} in a BD-graph is an alternating sequence
$$
    P = (s = v_0, e_1, v_1, \ldots, e_k, v_k = t)
$$
of nodes and edges such that each edge $e_i$ connects nodes $v_{i-1}$ and
$v_i$, and for $i = 1, \ldots, k-1$, the edges $e_i,e_{i+1}$ form a
\emph{transit pair} at $v_i$, which means that one of $e_i,e_{i+1}$ enters and
the other leaves~$v_i$. As before, an edge-simple walk is referred to as a
\emph{path}.

Now we associate to $G_\ell$ a BD-graph $H$ with edge capacities $c \colon EH
\to \Z_+$, as follows (see \reffig{aux_graph} for an illustration). Each
noncentral node $v \in VG_\ell - V^\natural$ generates two nodes $v^1,v^2$ in
$H$. They are connected by edge (arc) $e_v$ going from $v^1$ to $v^2$ and
having the capacity equal to $c(v)$. We say that $e_v$ \emph{inherits the
capacity} of the node~$v$. For $s \in T$, the set $\bar V^s:=\setst{v^1, v^2}{v
\in V^s}$ in $H$ is called the \emph{zone} of $s$, similar to $V^s$ in~$G$.

Consider an edge $e=uv \in EG_\ell$. Let $u, v \in V^s$ for some $s \in T$ and
assume for definiteness that $\pi(u) < \pi(v)$ (note that $\ell(uv)>0$ implies
$\pi(u)\ne\pi(v)$; this is where the strict positivity of the cost function $a$
is important). Then $e$ generates in $H$ an edge (arc) going from $u^2$ to
$v^1$, and we assign infinite capacity to it. (By ``infinite capacity'' we mean
a sufficiently large positive integer.) Now let $u \in V^s$ and $v \in V^t$ for
distinct $s, t \in T$. Then $e$ generates an infinite capacity edge
$\vec{u}^2\bvec{v}^2$ (leaving both $u^2$ and~$v^2$).

The transformation of central nodes is less straightforward. Each $w \in
V^\natural$ generates in $H$ a so-called \emph{gadget}, denoted by $\Gamma_w$.
It consists of $\abs{T} + 1$ nodes; they correspond to $w$ and the elements of
$T$ and are denoted as $\theta_w$ and $\theta_{w,s}$, $s \in T$. The edges of
$\Gamma_w$ are: a loop $e_w$ leaving $\theta_w$ (twice) and, for each $s \in
T$, an edge $e_{w,s}$ going from $\theta_{w,s}$ to $\theta_w$, called the
$s$-\emph{leg} in the gadget. Each edge in $\Gamma_w$ is endowed with the
capacity equal to $c(w)$.

Each gadget $\Gamma_w$ is connected to the remaining part of $H$ as follows.
For each edge of the form $vw$ in $G_\ell$, we know that $v \in V^s$ for some
$s \in T$ (by the construction of $G_\ell$). Then $vw$ generates an infinite
capacity edge (arc) going from $v^2$ to $\theta_{w,s}$.

Finally, we add to $H$ an extra node $q$, regarding it as the
\emph{source}, and for each $s \in T$, draw an infinite capacity edge (arc)
from $q$ to $s^1$.

\begin{figure}[tb]
    \centering
    \subfigure[Graph $G_\ell$.]{
      \includegraphics{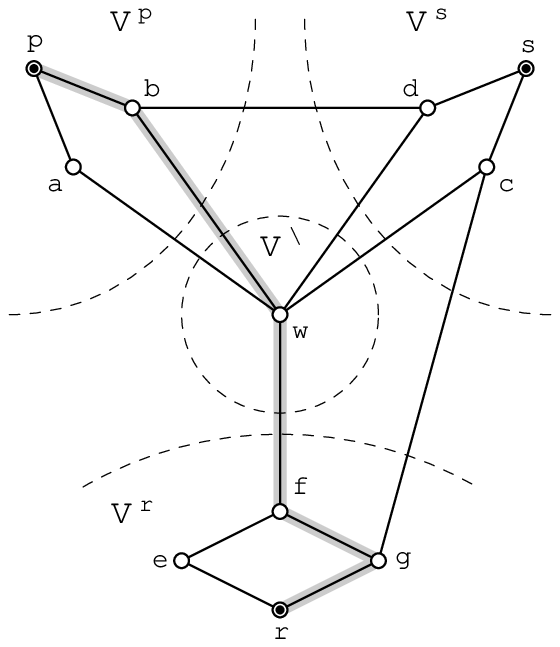}%
    }
    \hspace{1cm}%
    \subfigure[Bidirected graph $H$.]{
      \includegraphics{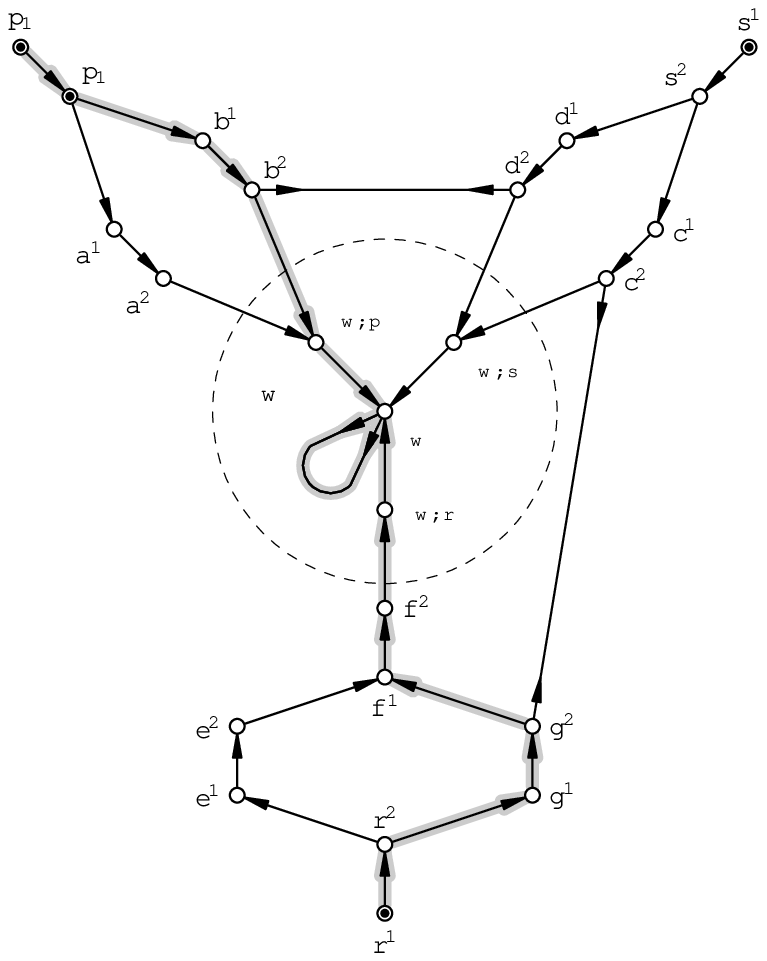}%
    }
    \caption{
        Constructing graph $H$. Here $T = \set{p,s,r}$, $V^p = \set{p,a,b}$,
        $V^s = \set{s,c,d}$, $V^r = \set{r,e,f,g}$, $V^\natural = \set{w}$.
        (The source $q$ is not shown.)
        Bidirected edges leaving one endpoint and entering the other are
        indicated by ordinary directed arcs.
        Marked are one $\ell$-geodesic $P$ and its image $\bar P$.
    }
    \label{fig:aux_graph}
\end{figure}

The obtained BD-graph $H$ captures information about the $\ell$-geodesics
in~$G$. Namely, each $\ell$-geodesic $P$ going from $s$ to $t$ induces a unique
closed $q$--$q$ walk $\bar P$ in $H$. The first and the last edges of $\bar P$
are $\vec{qs}^1$ and $\bvec{t}^1\bvec{q}$, respectively. For a noncentral node
$v$ in $P$, ~$\bar P$ traverses the edge $\vec{v}^1\vec{v}^2$. An edge $uv\in
EP$ with $\pi(u)<\pi(v)$ inside a zone induces the edge $\vec{u}^2\vec{v}^1$ in
$\bar P$. An edge $uv \in EP$ connecting different zones (if any) induces the
edge $\vec{u}^2\bvec{v}^2$ in $\bar P$. Finally, suppose $P$ traverses a
central node $w \in V^\natural$ and let $uw, wv \in EG_\ell$ be the edges of
$P$ incident to $w$. By \reflm{geodesics}, $u \in V^s$ and $v \in V^t$ for some
$s\ne t$. Then the sequence of nodes $u, w, v$ in $P$ generates the subpath in
$\bar P$ with the sequence of edges
$u^2\theta_{w,s},e_{w,s},e_w,e_{w,t},\theta_{w,t}v^2$.

The resulting walk $\bar P$ is edge-simple, so it is a closed path. Conversely,
let $Q$ be a (nontrivial) $q$--$q$ walk in $H$. One can see that $Q$ with $q$
removed is concatenated as $Q_1\circ Q'\circ Q_2$, where $Q_1$ is a directed
path within a zone $\bar V^s$, $Q_2$ is reverse to a directed path within a
zone $\bar V^t$ (with possibly $s=t$), and $Q'$ either~(i) is formed by an edge
$\vec{u}^2\bvec{v}^2$ connecting these zones (in which case $s\ne t$), or~(ii)
is the walk with the sequence of edges $e_{w,s},e_w,e_{w,t}$, for some central
node $w$ of $G_\ell$. Moreover, the image in $G$ of each of $Q_1,Q_2$ is an
$\ell$-shortest path. When $s=t$ happens in case~(ii), $Q$ traverses the edge
$e_{w,s}$ twice. In all other cases, $Q$ is edge-simple and its image in $G$ is
an $\ell$-shortest $T$-path (a $\lambda$-geodesic).

These observations show that there is a natural bijection between the
$\ell$-geodesics in $G$ and the (nontrivial) $q$--$q$ paths in $H$.
\medskip

We will refer to the BD-graph $H$ described above as the \emph{compact}
BD-graph related to $G_\ell$; it will be essentially used to devise an
efficient algorithm for solving~(\ncpprob) in \refsec{primal_alg}. Besides, in
the proof of the primal integrality (with $c$ even) in \refsec{prim_h_int}, we
will deal with a modified BD-graph. It is obtained from $H$ as above by
replicating each gadget $\Gamma_w$ into $c(w)$ copies $\Gamma_{w^i}$,
$i=1,\ldots,c(w)$, called the \emph{1-gadgets} generated by $w$. More
precisely, to construct $\Gamma_{w^i}$, we make $i$-th copy $\theta_{w^i}$ of
the node $\theta_w$, ~$i$-th copy $e_{w^i}$ of the loop $e_w$ leaving
$\theta_{w^i}$ (twice), and $i$-th copy $e_{w^i,s}$ of each leg $e_{w,s}$,
$s\in T$, where $e_{w^i,s}$ goes from $\theta_{w,s}$ to $\theta_{w^i}$ (so
$\theta_{w,s}$, $s\in T$, are the common nodes of the created 1-gadgets). All
edges in these 1-gadgets are endowed with \emph{unit} capacities.

We keep notation $H$ for the constructed graph and call it the
\emph{expensive} BD-graph related to $G_\ell$. Also we keep notation $c$ for
the edge capacities in $H$. There is a natural relationship between the
$q$--$q$ walks (paths) in both versions of $H$. The 1-gadgets created from
the same central node $w$ of $G_\ell$ are isomorphic, and for any
$i,j=1,\ldots,c(w)$, there is an automorphism of $H$ which swaps
$\theta_{w^i}$ and $\theta_{w^j}$ and is invariant on the other nodes.

\subsection{Bidirected flows}
\label{ssec:bidir_flows}

Let $\Gamma$ be a bidirected graph. Like in usual digraphs, $\deltain(v)$ and
$\deltaout(v)$ denote the sets of edges in $\Gamma$ entering and leaving $v \in
V\Gamma$, respectively. A loop $e$ at $v$, if any, is counted twice in
$\deltain(v)$ if $e$ enters $v$, and twice in $\deltaout(v)$ if $e$ leaves $v$;
hence $\deltain(v)$ and $\deltaout(v)$ are actually multisets. (Recall that we
do not allow a loop which simultaneously enters and leaves a node.)

Let $q$ be a distinguished node with $\deltain(q)=\emptyset$ in~$\Gamma$ (the
\emph{source}) and let the edges of~$\Gamma$ have integer capacities $c \colon
E\Gamma \to \Z_+$. A \emph{bidirected $q$-flow}, or a \emph{BD-flow} for short,
is a function $f \colon E\Gamma \to \Q_+$ satisfying $\div_f(v) = 0$ for all
nodes $v \in V\Gamma - \set{q}$; and the \emph{value} of $f$ is defined to be
$\div_f(q)$ (cf.~\cite{GK-04}). Here
\begin{equation}
\label{eq:div}
    \div_{f}(v) := f(\deltaout(v)) - f(\deltain(v))
\end{equation}
is the \emph{divergence} of $f$ at $v$. Note that if $e$  is a loop at $v$ then
$e$ contributes $\pm 2f(e)$ in $\div_{f}(v)$. If $f(e) \le c(e)$ for all $e \in
E\Gamma$ then $f$ is called \emph{feasible}. In addition, if $f$ is
integer-valued on all edges then we refer to $f$ as an integer bidirected
$q$-flow, or an \emph{IBD-flow}. One can see that finding a fractional (resp.
integer) BD-flow of the maximum value is equivalent to constructing a maximum
fractional (resp. integer) packing of closed $q$--$q$ walks (they leave $q$
twice).

\medskip

Return to an optimal solution $l$ to (\dualpprob), and let $\ell := \bar a +
\bar l$. Consider the (expensive or compact) BD-graph $H$ related to $G_\ell$,
and the capacity function $c$ on the edges of $H$ (constructed from the node
capacities $c$ of $G$). The above correspondence between $\ell$-geodesics in
$G$ and $q$--$q$ paths in $H$ is extended to $\ell$-geodesic multiflows in $G$
and certain $q$-flows in $H$ (where $q$ is the source in $H$ as before). More
precisely, let $F$ be a (fractional) $\ell$-geodesic multiflow in $G$
represented by a collection of $\ell$-geodesics $P_i$ and weights $\alpha_i :=
F(P_i)$, $i = 1, \ldots, m$ (cf.~\refeq{mf_coll}). Then each $P_i$ determines a
$q$--$q$ path $\bar P_i$ in $H$, and $f := \alpha_1\chi^{E\bar P_1} + \ldots +
\alpha_m\chi^{E\bar P_m}$ is a BD-flow in $H$; we say that $f$ is
\emph{generated} by $F$ (note that $\val{f}=2\val{F}$). Furthermore, $f$ is
feasible if $F$ is such, and for each central node $w\in V^\natural$, the
following relations hold:
   \begin{equation}
   \label{eq:good}
   \sum\nolimits_{s\in T} f(e_{w,s})=2f(e_w)\quad \mbox{and} \quad
   f(e_{w,s})\le f(e_w)\;\; \mbox{for each $s\in T$}.
   \end{equation}

Considering an arbitrary BD-flow $f$ in $H$, we say that $f$ is \emph{good} if
it satisfies~\refeq{good} for all $w\in V^\natural$ (here the second relation
in~\refeq{good} is important, while the first one obviously holds for any
BD-flow). The following assertion is of use.
  \begin{lemma} \label{lm:f-F}
Let $f$ be a good BD-flow in $H$. Then $f$ is generated by an
$\ell$-geodesic multiflow $F$ in $G$. Moreover, if $f$ is integral, then it is
generated by an integer $\ell$-geodesic multiflow $F$. In both cases, $F$ can
be found in $O(|EH|)$ time.
  \end{lemma}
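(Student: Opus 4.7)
The plan is to obtain $F$ from $f$ by a flow decomposition in $H$ and invoke the bijection established in \refssec{aux_graph} between nontrivial closed $q$--$q$ paths in $H$ and $\ell$-geodesics in $G$. The only real obstacle is the possibility that during decomposition a walk enters a gadget $\Gamma_w$ via some leg $e_{w,s}$ and then exits via the \emph{same} leg; such a walk is not edge-simple and its image in $G$ fails to be a $T$-path. The second inequality in~\refeq{good}, combined with $\sum_s f(e_{w,s}) = 2 f(e_w)$, is exactly what lets us avoid this.

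To control the gadgets, for each central node $w$ I would first compute a \emph{crossing pattern}: a nonnegative symmetric array $(\alpha_{s,t})_{s,t \in T,\, s \ne t}$ with $\sum_{t \ne s} \alpha_{s,t} = f(e_{w,s})$ for every $s \in T$. Interpreting $\alpha_{s,t}$ as the amount of flow routed through $\Gamma_w$ along the subwalk $e_{w,s}, e_w, e_{w,t}$, the existence of such an array is a $b$-matching feasibility problem on the complete graph on $T$ with degree prescriptions $b(s) := f(e_{w,s})$; the classical feasibility conditions (even total degree, no single degree exceeding half of the total) are precisely the two inequalities of~\refeq{good}. A greedy construction that repeatedly pairs the two legs of largest residual demand produces such an array in $O(\abs{T})$ time per gadget, and the array is integer whenever $f$ is.

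With the crossing patterns fixed I would run a straightforward closed-walk extraction on $f$: starting at $q$, repeatedly follow edges with positive residual flow, respecting the transit-pair rule at every intermediate node and, whenever a gadget is entered via a leg $e_{w,s}$, exiting via a leg $e_{w,t}$ with residual $\alpha_{s,t} > 0$ (which exists, by construction, as long as $f(e_{w,s}) > 0$); subtract the minimum residual $\alpha_i$ along the extracted walk from $f$ and from the crossing residuals, and repeat until $f \equiv 0$. By design the extracted walks are edge-simple and use two distinct legs at every visited gadget, so by the correspondence recalled just before the lemma they are precisely the images $\bar P_i$ of $\ell$-geodesics $P_i$ in $G$; the collection $\set{(\alpha_i, P_i)}$ is then the multiflow $F$ generating $f$, integer whenever $f$ is. By the same amortization used in standard flow decomposition (each iteration saturates at least one edge and prefixes of the extracted walk can be reused) the whole construction runs in $O(\abs{EH})$ time.
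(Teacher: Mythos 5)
Your proposal is correct, and it takes a genuinely different route from the paper's. The paper does not precompute crossing patterns: it handles the gadgets on the fly. While some central node $w$ still has $f(e_w)>0$, the paper picks distinct terminals $s,t$ with $f(e_{w,s}),f(e_{w,t})>0$ so that no third terminal $p$ \emph{dominates} (attains $f(e_{w,p})=f(e_w)$), builds one $q$--$q$ path through the loop $e_w$ and the $s$- and $t$-zones, and assigns it the \emph{maximum} weight $\alpha$ for which the updated flow remains both feasible and good. The running-time bound then comes from a counting argument: each weight choice is determined either by edge saturation, which strictly shrinks the support of $f$, or by the goodness constraint, which creates a fresh dominating terminal and can happen at most once per gadget; integrality of the extracted weights when $f$ is integral needs a separate small parity observation.

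Your crossing-pattern idea replaces that online bookkeeping with an offline, per-gadget computation, which is cleaner and more modular. Identifying the constraints in~\refeq{good} with the Erd\H{o}s--Gallai conditions for realizability of a degree sequence by a loopless multigraph on the vertex set $T$ is exactly the right observation (the paper's first relation in~\refeq{good} is the trivially satisfied even-sum condition, and the second is the ``no dominant degree'' condition), and it makes the integrality claim transparent instead of requiring the paper's parity argument. The one spot that needs sharpening is the greedy: ``pair the two legs of largest residual demand'' is ambiguous about the decrement. Unit decrements cost $\Omega\bigl(\sum_s f(e_{w,s})\bigr)$ per gadget, while greedily decrementing by the full second-largest value can break feasibility (for residuals $(2,2,2)$, pairing the first two by $2$ leaves $(0,0,2)$). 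The right amount to peel off when pairing the two largest residuals $b_1\ge b_2$ is $\min\{b_2,\ m-b_3\}$, where $m$ is the current half-sum and $b_3$ the third largest residual; one checks that this terminates in $O(\abs{T})$ steps and produces a pattern with $O(\abs{T})$ support, which is also what the amortized $O(\abs{EH})$ extraction requires. With that fixed, the extraction phase, the correspondence with $\ell$-geodesics, and the conclusion all go through, so your proof stands as a valid alternative to the paper's.
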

  \begin{proof}
Suppose there is a central node $w\in V^\natural$ such that $f(e_w)>0$. Let us
say that $p\in T$ \emph{dominates} at $w$ (w.r.t. $f$) if $f(e_{w,p})=f(e_w)$.
From~\refeq{good} it follows that there exist distinct $s,t\in T$ such that
$f(e_{w,s}),f(e_{w,t})>0$ and none of $p\in T-\{s,t\}$ dominates at $w$. Choose
such $s,t$. Build in $H$ a maximal walk $Q$ starting with
$\theta_w,e_{w,s},\theta_{w,s},\ldots$ and such that $f(e)>0$ for all edges $e$
of $Q$. It is easily seen from the construction of $H$ that $Q$ is
edge-simple, terminates at $q$, and have all vertices in $\bar V^s$, except for
$\theta_w,\theta_{w,s}$. Build a similar walk (path) $Q'$ starting with
$\theta_w,e_{w,t},\theta_{w,t}$. Then the concatenation of the reverse to $Q$,
the loop $e_w$ and the path $Q'$ is a $q$--$q$ path and its image $P$ in $G$ is
an $\ell$-geodesic (from $s$ to $t$).

Assign the weight of $P$ to be the maximum number $\alpha$ subject to two
conditions: (i) $\alpha\le f(e)$ for each $e\in E\bar P$, and (ii) the flow
$f':=f-\alpha\chi^{\bar P}$ is still good. If $\alpha$ is determined by~(i), we
have $|\supp{f'}|<|\supp{f}|$ (where $\supp{\phi} := \setst{x}{\phi(x) \ne 0}$),
whereas if $\alpha$ is determined by~(ii), there appears $p\in T$ dominating at
$w$ (w.r.t. $f'$). If $f'(e_w)>0$, repeat the procedure for $f'$ and $w$,
otherwise apply the procedure to $f'$ and another $w'\in V^\natural$, and so
on. (Note that if, in the process of handling $w$, a current weight $\alpha$ is
determined by~(ii), then the weights of all subsequent paths through $e_w$ are
already determined by~(i); this will provide the desired complexity.)

Eventually, we come to a good flow $\tilde f$ with $\tilde f(e_w)=0$ for
all $w\in V^\natural$. This $\tilde f$ is decomposed into a sum of flows along
$q$--$q$ paths in a straightforward way, in $O(|EH|)$ time (like for usual
flows in digraphs). Taking together the images in $G$ of the constructed
weighted $q$--$q$ paths, we obtain a required $\ell$-geodesic multiflow $F$.
The running time of the whole process is $O(|EH|)$, and if $f$ is
integral, then the weights $\alpha$ of all paths are integral as well.
(Integrality of a current weight $\alpha$ subject to integrality of a current
flow $f$ is obvious when $\alpha$ is determined by~(i), and follows from
the fact, implied by~\refeq{good}, that for any $p\in T$, ~$\sum_{s\ne
p}f(e_{w,s})- f(e_{w,p})$ is even, when $\alpha$ is determined by~(ii).)
  \end{proof}

\begin{remark} \label{rem:compact_vs_expensive}
    In case of an expensive BD-graph $H$, any feasible IBD-flow $f$ is good.
    Indeed, for any 1-gadget $\Gamma_{w^i}$ in $H$, we have $c(e_{w^i})=1$ and, therefore, $f(e_w) \in \set{0,1}$.
    The second relation in~\refeq{good} is trivial when $f(e_w) = 0$, and follows from the
    constraints $f(e_{w,s}) \le c(e_{w,s}) = 1$ ($s\in T$) when $f(e_w) = 1$.
\end{remark}

Define the following subset of edges in $H$:
\begin{equation} \label{eq:E0}
    E_0 := \setst{e_v}{v \in VG, \;\; l(v) > 0}.
\end{equation}
For an optimal (possibly fractional) solution $F$ to~(\ncpprob) and a node
$v\in VG$ with $l(v)>0$, we have $\hat F(v) = c(v)$ (by \refeq{cs_saturated});
so the edge $e_v$ of $H$ corresponding to $v$ is
saturated by the BD-flow $f$ generated by $F$, i.e. $f(e_v) = c(e_v)$. We call the
edges in $E_0$ \emph{locked}.

Thus, the graph $H$ admits a (fractional) good feasible BD-flow
saturating the locked edges. The following strengthening is crucial.
\begin{prop}
\label{prop:aux_bidir_flow}
    There exists a good feasible IBD-flow in $H$ that saturates all locked edges.
\end{prop}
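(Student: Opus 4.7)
The plan is to reduce the integrality question to classical network-flow integrality via a skew-symmetric double cover of $H$, and then perform a half-integer-to-integer rounding that exploits the specific structure of the expensive BD-graph.

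First, by the correspondence established immediately before the proposition, any fractional optimal solution $F$ to~(\ncpprob) yields a fractional good feasible BD-flow $f_0$ in $H$ which, by \refeq{cs_saturated}, saturates every $e_v \in E_0$. Combined with \refrem{compact_vs_expensive}, this reduces the task to producing a feasible IBD-flow $f$ in $H$ with $f(e_v) = c(e_v)$ for every $e_v \in E_0$; goodness is then automatic.

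Next, I would pass to the skew-symmetric double cover $\tilde H$ of $H$: split each node $v$ into $v^+, v^-$; replace each regular arc $uv$ by the symmetric pair $u^+v^+$, $v^-u^-$; replace each bidirected edge leaving both endpoints by $u^+v^-$, $v^+u^-$; and replace each unit-capacity loop $e_{w^i}$ by a single unit-capacity arc from $\theta_{w^i}^+$ to $\theta_{w^i}^-$. BD-flows in $H$ correspond bijectively to skew-symmetric flows in $\tilde H$, with capacity and saturation constraints carrying over; in particular, the lift of $f_0$ is fractional feasible. Classical max-flow integrality in $\tilde H$ together with a skew-symmetrization step (averaging an integer feasible flow with its involuted copy) then produces a \emph{half-integer} feasible BD-flow $f$ in $H$ that saturates $E_0$.

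The main obstacle --- the step truly specific to the expensive graph --- is upgrading $f$ from half-integer to integer. Three features of the expensive construction should help here: (i) the arcs $e_v$ have even capacity, since the paper reset $c := 2c$; (ii) the 1-gadget edges have unit capacity, so $f$ takes values in $\{0, 1/2, 1\}$ on them; and (iii) each central $w$ comes with $c(w)$ pairwise isomorphic replicas $\Gamma_{w^i}$, permutable by an automorphism of $H$. A parity argument at each $\theta_{w,s}$ should show that the number of replicas with $f(e_{w^i, s}) = 1/2$ is even, so these half-loaded 1-gadgets can be paired up and, using the permuting automorphism, rounded so that one replica in each pair becomes fully loaded and the other empty, making every gadget edge integer-valued. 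The leftover half-integer values on non-gadget $e_v$ arcs are then rounded away by augmenting along half-integer cycles; locked edges, already saturated at their (even integer) capacity, are untouched, so saturation is preserved and the resulting IBD-flow is as required.
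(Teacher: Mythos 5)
Your opening reduction is correct and matches the paper's setup: work in the expensive form of $H$ so that goodness is automatic (Remark~\ref{rem:compact_vs_expensive}), and then the only remaining task is to produce a feasible IBD-flow saturating $E_0$. Your next step---lifting to the skew-symmetric double cover, obtaining an integer (but not skew-symmetric) max flow by classical integrality, and averaging with the involuted copy to get a \emph{half}-integer BD-flow saturating $E_0$---is also sound and is precisely the ``easy half'' of the argument.

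The genuine gap is the half-integer-to-integer rounding, which is exactly where all the difficulty lies and where the paper does something very different. Your parity claim at $\theta_{w,s}$---that the number of legs $e_{w^i,s}$ carrying flow $1/2$ is even---does not follow from conservation, because the incoming arcs $v^2\theta_{w,s}$ have infinite capacity and nothing forces them to be integer-valued; conservation only says $\sum_i f(e_{w^i,s})$ equals a half-integer, not an integer. More fundamentally, pairing up two ``half-loaded'' 1-gadgets $\Gamma_{w^i},\Gamma_{w^j}$ and concentrating the flow on one of them only yields integer values if the two gadgets have the \emph{same} fractional leg pattern, which is not guaranteed: if $\Gamma_{w^i}$ has legs $s,t$ at $1/2$ and $\Gamma_{w^j}$ has legs $s,u$ at $1/2$, the merge leaves legs $t$ and $u$ at $1/2$. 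Finally, the closing remark that the remaining half-integer values on $e_v$ arcs can be ``rounded away by augmenting along half-integer cycles'' is precisely the kind of step that fails in bidirected graphs in general: odd barriers are the obstruction, and an alternating-cycle argument that ignores them cannot work without some control over their structure.

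The paper's actual route avoids rounding entirely. It passes to an auxiliary graph $H^1$ (with a new source $z$, so that saturating $E_0$ becomes a max-value question rather than a lower-bound constraint), invokes the Max IBD-Flow Min Odd Barrier theorem (Theorem~\ref{th:max_ibd_flow_min_odd_barrier}, i.e.\ Tutte's theorem translated to bidirected language) so that integrality is built in, and then shows via Lemma~\ref{lm:b_sets} that each component $B_p$ of the \emph{canonical} odd barrier is a single node $\theta_{w^i}$ with a very restricted incidence structure. The replica automorphisms and the evenness of $c$ are indeed used there---so your instinct about which structural features matter is right---but they are used to pin down the barrier, not to round a half-integer flow. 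With the barrier structure in hand, the goodness inequality $f(e_{w^i,s})\le f(e_{w^i})$ gives $f[\vec A,B_p]-f[\bvec A,B_p]\le c[\vec A,B_p]-1$, which is the term-by-term estimate needed to conclude $\val f\le\val g$. If you want to salvage your plan, you would essentially have to prove the same barrier lemma, so it is cleaner to adopt the paper's framework directly rather than to attempt an ad hoc rounding.
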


A proof of this proposition involves an additional graph-theoretic machinery
and will be given in \refsec{prim_h_int}. Assuming its validity, we immediately
obtain \refth{primal_halfint} from \reflm{f-F}.

\section{Solving (\ncpprob) in strongly polynomial time}
\label{sec:primal_alg}

In this section we devise a strongly polynomial algorithm for solving the
primal parametric problem~(\ncpprob). As before, we assume that $a$ and
$\lambda$ are integral and that the node capacities $c$ are even, so our goal
is to find an integer optimal multiflow.

The algorithm starts with computing a (fractional) optimal dual solution $l$
and constructing the BD-graph $H$ w.r.t. the length function $\ell:=\bar
a+\bar l$. Then it finds a good IBD-flow $f$ in $H$ saturating the locked
edges (assuming validity of \refprop{aux_bidir_flow}). Applying the efficient
procedure as in the proof of \reflm{f-F} to decompose $f$ into a collection of
paths with integer weights, we will obtain an integer optimal solution
to~(\ncpprob).

To provide the desired complexity, we shall work with $H$ given in the
\emph{compact} form (defined in \refssec{aux_graph}). The core of our method
consists in finding the load function of some integer optimal multiflow $F$ in
$G$ (without explicitly computing $F$ itself). This function will just generate
the desired IBD-flow in $H$. We describe the stages of the algorithm in the
subsections below.

\subsection{Constructing an optimal dual solution}
\label{ssec:dual_algo}

Problem (\dualpprob) straightforwardly reduces to a ``compact'' linear program,
as follows. Besides variables $l(v)\in\Q_+$ ($v\in VG$), assign a variable
$\phi_s(v)\in\Q$ to each terminal $s \in T$ and node $v$ (a sort of
``distance'' of $v$ from $s$). Consider the following problem (where $\bar l$
and $\bar a$ are defined according to~\refeq{wbar}):
\begin{numitem}
\label{eq:compact_dualpprob}
    Minimize $c \cdot l$ subject to the following constraints:
    \begin{align*}
        \phi_{s}(u) - \phi_{s}(v) \le \bar a(e) + \bar l(e)
        &
        \
        \\
        \phi_{s}(v) - \phi_{s}(u) \le \bar a(e) + \bar l(e)
        &
        \quad \mbox{for each $e = uv \in EG$;} \\
        \phi_s(t)-\phi_s(s) \ge \lambda
        &
        \quad \mbox{for all $s, t \in T$, $s \ne t$.}
    \end{align*}
\end{numitem}

 \begin{lemma}
Programs~(\dualpprob) and~\refeq{compact_dualpprob} are equivalent.
\end{lemma}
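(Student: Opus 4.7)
The plan is to show that the two programs have the same optimal value (and indeed the same set of feasible values of $l$) by exhibiting a natural correspondence between feasible solutions. The key point is that the (exponentially many) path constraints of (\dualpprob) can be encoded, for each fixed source terminal $s\in T$, by the existence of a function $\phi_s\colon VG\to\Q$ behaving like an $\ell$-distance from $s$, where $\ell:=\bar a+\bar l$.

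For the forward direction, I would take any $l\colon VG\to\Q_+$ feasible for (\dualpprob), form $\ell:=\bar a+\bar l$, and define $\phi_s(v):=\dist_\ell(s,v)$ for each $s\in T$ and $v\in VG$. The edge constraints of \refeq{compact_dualpprob} then follow from the triangle inequality for $\dist_\ell$ applied to an edge $e=uv$: both $\phi_s(v)\le\phi_s(u)+\ell(uv)$ and $\phi_s(u)\le\phi_s(v)+\ell(uv)$ hold, and $\ell(uv)=\bar a(e)+\bar l(e)$. The terminal constraints hold because $\phi_s(s)=0$ and $\phi_s(t)=\dist_\ell(s,t)\ge\lambda$ by \refeq{dual_dist}. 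Feasibility of $(l,\phi)$ for \refeq{compact_dualpprob} with the same objective value $c\cdot l$ is then immediate.

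For the reverse direction, I would take any feasible $(l,\phi)$ for \refeq{compact_dualpprob} and show that $l$ satisfies $l(P)\ge\lambda-a(P)$ for every $T$-path $P$. Fix such a path $P$ from $s$ to $t$ with node sequence $s=v_0,v_1,\dots,v_k=t$ and edges $e_i=v_{i-1}v_i$. Telescoping and using the edge constraints for the source $s$ gives
\[
\phi_s(t)-\phi_s(s)\;=\;\sum_{i=1}^k\bigl(\phi_s(v_i)-\phi_s(v_{i-1})\bigr)\;\le\;\sum_{i=1}^k\bigl(\bar a(e_i)+\bar l(e_i)\bigr)\;=\;\bar a(P)+\bar l(P).
\]
By \refeq{node_to_edge} (applied to the $T$-path $P$) we have $\bar a(P)=a(P)$ and $\bar l(P)=l(P)$, and by the terminal constraint $\phi_s(t)-\phi_s(s)\ge\lambda$. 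Hence $l(P)\ge\lambda-a(P)$, so $l$ is feasible for (\dualpprob) with the same objective value.

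I do not anticipate any real obstacle here; the only minor point to be careful about is the translation between node lengths and edge lengths via \refeq{wbar}--\refeq{node_to_edge}, which is used both to interpret the edge constraints in \refeq{compact_dualpprob} as $|\phi_s(u)-\phi_s(v)|\le\ell(uv)$ and to convert the resulting path bound back into the node-weighted form required by (\dualpprob). Since both directions preserve the objective $c\cdot l$ and impose no extra restriction on $l$, the two programs have the same optimal value and in fact the same set of feasible $l$.
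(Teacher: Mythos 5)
Your proof is correct and follows essentially the same route as the paper: the forward direction sets $\phi_s(v):=\dist_\ell(s,v)$ with $\ell:=\bar a+\bar l$ and checks the triangle and terminal constraints, and the reverse direction telescopes the edge inequalities along a $T$-path and applies \refeq{node_to_edge} to recover \refeq{dual_ineq}. The paper states this more tersely but the underlying argument is the same.
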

Indeed, if $(l,\phi)$ is a feasible solution to \refeq{compact_dualpprob} then,
obviously, $l$ is a feasible solution to~(\dualpprob). Conversely, let $l$ be a
feasible solution to~(\dualpprob). For $v\in VG$ and $s\in T$, define $\phi_s(v)
:= \dist_\ell(s,v)$, where $\ell := \bar l + \bar a$. It is easy to check that
$(l,\phi)$ is a feasible solution to \refeq{compact_dualpprob}.

  \Xcomment{
  \begin{proof}
    We first show that \refeq{dual_ineq} holds for $l$.
    Indeed, put $\ell := \bar l + \bar a$ and consider an arbitrary $T$-path~$P$
    connecting $s$ and $t$ ($s, t \in T$, $s \ne t$).
    Summing the $\ell$-lengths of edges along~$P$,
    one gets $\ell(P) \le \phi_s(t) - \phi_s(s) \le \lambda$
    (by the inequalities in~\refeq{compact_dualpprob}).
    Since $\ell(P) = l(P) + a(P)$ this implies \refeq{dual_ineq}, as required.
    Therefore, the minimum in \refeq{compact_dualpprob} does not exceed
    the minimum in (\dualpprob).

    For the other direction, let $l$ by an optimal solution to (\dualpprob).
    One has to construct $\phi$ obeying the inequalities in \refeq{compact_dualpprob}.
    To this aim, put $\phi_s(v) := \dist_\ell(s,v)$ where $\ell := \bar l + \bar a$,
    as earlier. Then, the first two conditions in \refeq{compact_dualpprob}
    follow from the triangle inequality and then the third one
    is implied by $\phi_s(t) - \phi_s(s) = \dist_\ell(s,t) - 0 = l(P) + a(P) \ge \lambda$
    (see~\refeq{dual_ineq}).
\end{proof}
  }

  \medskip
The size of the constraint matrix in \refeq{compact_dualpprob} (written in binary
notation) is polynomial in $\abs{VG}$. Therefore, (\dualpprob) is solvable in
strongly polynomial time by use of Tardos's version of the ellipsoid method.
(This remains valid when $a$ and $\lambda$ are nonnegative rational numbers.)

\subsection{Computing the load function of an optimal multiflow}
\label{ssec:load_funct}

The following fact is of importance.
\begin{lemma}
\label{lm:node_load}
    One can find, in strongly polynomial time, a function $g \colon VG \to \Z_+$
    such that $g(v) = \hat F(v)$ for all $v \in VG$, where $F$ is some integer optimal multiflow in~(\ncpprob).
\end{lemma}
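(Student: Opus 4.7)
The plan is to compute $g$ by solving a polynomial-size linear program on the compact bidirected graph $H$ (from \refssec{aux_graph}) via Tardos's strongly polynomial LP method, then round the fractional solution to integer load values using a small number of additional LP calls.

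First I would formulate the LP whose variables $f(e) \in \Q_+$ range over $e \in EH$ and whose constraints express: (a) flow conservation $\div_f(v) = 0$ at all $v \ne q$; (b) capacity $f(e) \le c(e)$; (c) goodness (both relations in~\refeq{good}) at every central node $w \in V^\natural$; and (d) saturation $f(e_v) = c(e_v)$ of each locked edge from $E_0$. The objective is to maximize $\div_f(q)$. By \reflm{f-F} and \refprop{aux_bidir_flow}, any integer optimum $f$ of this LP corresponds to an integer optimal multiflow $F$ of~(\ncpprob) via $\hat F(v) = f(e_v)$ (noncentral $v$) and $\hat F(w) = f(e_w)$ (central $w$); so obtaining $g$ amounts to finding integer values of $f$ on the ``node'' edges $e_v$ and central loops $e_w$. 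Since $\abs{EH}$ is polynomial in $\abs{VG}$, the LP is of polynomial size.

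Second, I would invoke Tardos's strongly polynomial algorithm on this LP to obtain a (possibly fractional) optimal solution $f^*$ of value $\mu$. To make the load coordinates integer, I would then process the load coordinates in some order. At each step the residual polytope consists of optimal $f$'s with certain load coordinates already pinned to integers; applying~\refprop{aux_bidir_flow} to the residual problem (which is again of the same form, with each pinned edge treated as locked with adjusted capacity) ensures that an integer optimum still exists. I would re-solve for $f^*$, pick the next unfixed load coordinate $f(e_v)$ (or $f(e_w)$), and fix it to one of the two candidates $\lfloor f^*(e_v)\rfloor$ or $\lceil f^*(e_v)\rceil$, choosing the one for which the augmented LP still attains value $\mu$. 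After $O(\abs{VG})$ iterations every load coordinate is integer, and they together determine~$g$.

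The main obstacle will be showing that at each rounding step at least one of the two candidates is valid. The plan is to argue that the set $\setst{f(e_v)}{f \text{ is an integer optimum of the residual problem}}$ is a consecutive interval of integers, so that any fractional $f^*(e_v)$ in its convex hull lies between two achievable integers and at least one of $\lfloor f^*(e_v)\rfloor$ and $\lceil f^*(e_v)\rceil$ belongs to this interval. I would prove consecutiveness by an augmenting-walk argument in $H$ (swapping portions of two integer optima along a good closed $q$--$q$ walk to change $f(e_v)$ by $\pm 1$ while preserving goodness, feasibility, and saturation), adapting standard techniques from bidirected flow theory. Once this structural lemma is in place, each iteration needs $O(1)$ Tardos calls, so the overall procedure runs in strongly polynomial time.
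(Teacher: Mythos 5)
Your plan takes a genuinely different route from the paper. You set up a polynomial-size LP on the compact bidirected graph $H$ and then iteratively round a fractional optimum to integer load values by pinning coordinates one at a time. The paper instead uses a perturbation trick: it solves the dual program (\dualpprob) twice, once with the integer costs $a$ and once with perturbed costs $a_\eps := a + \eps$, where $\eps(v_i) := 1/U^{i+1}$ and $U := \max_i c(v_i) + 2$. Since $\Phi(F,a,\lambda)$ is an integer for integer $F$ and the perturbation lowers the objective by less than $1$, any integer optimum for $a_\eps$ is still optimal for $a$, and the quantity $r := c\cdot l - c\cdot l_\eps$ (a difference of two dual optima) equals $\sum_i \hat F(v_i)\eps(v_i)$. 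Reading off the base-$U$ digits of $rU^{n+1}$ recovers all $\hat F(v_i)$ simultaneously, with only two dual solves and no rounding loop.

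The genuine gap in your proposal is the structural claim that at each step one of $\lfloor f^*(e_v)\rfloor$, $\lceil f^*(e_v)\rceil$ can be pinned. You reduce this to the assertion that the integer-achievable values of $f(e_v)$ form a consecutive interval, but even granting that, it does not suffice: $f^*(e_v)$ lies in the projection of the LP polytope onto coordinate $e_v$, which may strictly contain the convex hull of the integer-achievable values, since the polytope cut out by flow conservation, capacities, goodness, and saturation equalities need not be integral (its vertices are a priori only half-integral). The consecutiveness assertion itself is also left unproved and is not the standard arc-flow result, because the goodness constraints $f(e_{w,s}) \le f(e_w)$ from \refeq{good} couple several coordinates and lie outside the usual bidirected-flow system; an augmenting swap must preserve goodness, feasibility and saturation while changing $f(e_v)$ by exactly $\pm 1$, which is far from obvious. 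Finally, you invoke \refprop{aux_bidir_flow} on the pinned residual instance, but that proposition is established for the specific $H$ arising from a dual optimum of (\ncpprob) together with the locked edges determined by complementary slackness; its proof starts from the existence of a good \emph{fractional} BD-flow saturating those locked edges, obtained from a fractional optimal multiflow. After pinning $f(e_v)$ to an arbitrary integer, the modified instance no longer has that provenance and the hypothesis of \refprop{aux_bidir_flow} is not available. These points need to be repaired before the rounding loop is guaranteed to terminate successfully.
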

\begin{proof}
    We explain that in order to construct the desired $g$, it suffices to compare
    two optimal objective values: one for the original (integer) costs $a$ and the
    other for certain perturbed costs $a_\eps$. These values are computed by
    solving the corresponding dual problems by the method described in the previous
    subsection.

    More precisely, let $v_1,\ldots,v_n$ be the nodes of $G$. Let $U:=\max_i c(v_i)+2$,
    define $\eps(v_i):=1/U^{i+1}$, $i=1,\ldots,n$, and define the cost
    function $a_\eps$ on $VG$ to be $a+\eps$. Then for any integer feasible
    multiflow $F$, we have
    $$
        0 \le \Phi(F,a,\lambda) - \Phi(F,a_\eps,\lambda) = \sum_i \hat F(v_i) \eps(v_i) <
        U^{-1} + U^{-2} + \ldots + U^{-n} < 1.
    $$
    This and the fact that $\Phi(F,a,\lambda)$ is an integer (as $F,a,\lambda$ are
    integral) imply that if $F$ is optimal for $a_\eps$, then $F$ is optimal for
    $a$ as well. (An integer optimal multiflow for even capacities $c$ exists by
    \refth{primal_halfint}.) Moreover, for such an optimal $F$, the number
    $r := \sum_i \hat F(v_i) \eps(v_i)$ is computed in strongly polynomial time, since it is
    equal to $c\cdot l - c\cdot l_\eps$, where $l$ and $l_\eps$ are optimal dual
    solutions for $a$ and $a_\eps$, respectively. Here we use the LP duality
    equalities $\Phi(F,a,\lambda)=c\cdot l$ and $\Phi(F,a_\eps,\lambda) = c\cdot l_\eps$.
    Also the size of binary encoding of $a_\eps$ is bounded by that of $a$
    times a polynomial in $n$, so the dual problem with $a_\eps$ is solved in
    strongly polynomial time w.r.t. the original data.

    Hence, we have $r U^{n+1} = \sum_i \hat F(v_i) U^{n-i}$.
    The number $rU^{n+1}$ is an integer and, in view of $\hat F(v_i) \le c(v_i) < U$
    for each $i$, the $n$ coefficients in its base $U$ decomposition (the
    representation via degrees of $U$) are just $\hat F(v_1), \ldots, \hat F(v_n)$, thus giving~$g$.
\end{proof}

Recall that together with a node load function each multiflow $F$ also induces
its edge counterpart (see~\refeq{edge_load}). \reflm{node_load} can be
strengthened as follows.
\begin{lemma}
    \label{lm:full_load}
    One can find, in strongly polynomial time, a function $g \colon VG \cup EG \to \Z_+$
    such that $g(v) = \hat F(v)$ for all $v \in VG$ and $g(e) = \hat F(e)$ for all $e \in EG$,
    where $F$ is some integer optimal multiflow $F$ in~(\ncpprob).
\end{lemma}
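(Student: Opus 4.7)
I would prove \reflm{full_load} by reducing to \reflm{node_load} through an edge-subdivision construction: the edge loads in $G$ become node loads at newly inserted nodes in an auxiliary graph $G^+$, so that a single call to \reflm{node_load} on $G^+$ recovers both kinds of loads simultaneously.

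Concretely, the plan is to construct $G^+$ from $G$ by inserting, for each $e = uv \in EG$, a new node $m_e$ of degree two subdividing $e$. Extend the data by $c^+(x) := c(x)$, $a^+(x) := a(x)$ for $x \in VG$, and $c^+(m_e) := 2c(VG)$, $a^+(m_e) := 0$ for each $e \in EG$; keep the parameter $\lambda$ unchanged. The map
\[
P = (v_0, e_1, v_1, \ldots, e_k, v_k) \;\longmapsto\; P^+ := (v_0, m_{e_1}, v_1, \ldots, m_{e_k}, v_k)
\]
is a bijection between $T$-paths of $G$ and $G^+$, inducing a bijection $F \leftrightarrow F^+$ between multiflows. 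It preserves values, and since $a^+(m_e) = 0$ it also preserves costs, hence the objective $\Phi(\cdot,\cdot,\lambda)$. It preserves feasibility as well: the loads satisfy $\hat F^+(v) = \hat F(v)$ for $v \in VG$ and $\hat F^+(m_e) = \hat F(e)$ for $e \in EG$, and the bound $\hat F(e) \le \min(\hat F(u),\hat F(v)) \le c(VG)$ ensures that $c^+(m_e) = 2c(VG)$ is never binding; moreover $c^+$ is even-integer valued, as required. Consequently integer optimal multiflows in $(G, c, a, \lambda)$ correspond bijectively to integer optimal multiflows in $(G^+, c^+, a^+, \lambda)$.

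I would then apply \reflm{node_load} to $(G^+, T, c^+, a^+, \lambda)$. Since $|V^+| = |VG| + |EG|$ and the binary sizes of $c^+, a^+$ are polynomial in the input data, the algorithm runs in strongly polynomial time and returns $g^+ \colon V^+ \to \Z_+$ with $g^+(x) = \hat F^+(x)$ for some integer optimal $F^+$ in $G^+$. Letting $F$ denote the corresponding integer optimal multiflow in $G$ under the bijection, setting $g(v) := g^+(v)$ for $v \in VG$ and $g(e) := g^+(m_e)$ for $e \in EG$ yields the required function.

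The main technical obstacle is that \reflm{node_load} is stated under the paper's blanket assumption that node costs are strictly positive, whereas $a^+$ vanishes on the subdivision nodes $m_e$. As noted just after \refeq{cs_saturated}, this positivity is purely a simplifying convention; both \refth{primal_halfint} and the proof of \reflm{node_load} carry over verbatim to nonnegative integer costs, since the proof of \reflm{node_load} uses only integrality of $\Phi(F,a,\lambda)$ on integer data together with the existence of an integer optimum (which extends to nonnegative $a$ by the standard perturbation invoked in the paper). If one prefers to stay strictly within the positive-cost regime, an alternative is to set $a^+(m_e) := \delta$ for a small common positive integer $\delta$ and rescale $\lambda$, but the zero-cost extension is conceptually cleaner and has the same effect.
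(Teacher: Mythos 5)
Your proposal is correct and follows essentially the same route as the paper: subdivide each edge with a zero-cost node so that edge loads become node loads, then invoke \reflm{node_load}. The paper sets the new node's capacity to $\min\{c(u),c(v)\}$ rather than a uniform large value, but this is immaterial; your additional discussion of why the zero-cost subdivision nodes are harmless despite the paper's blanket positivity assumption is a reasonable clarification of a point the paper handles with a one-line remark.
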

\begin{proof}
    Split each edge $e = uv$ of $G$ into two edges $ux_e,x_ev$ in series and assign
    to each new node $x_e$ the capacity $c(x_e):= \min\set{c(u), c(v)}$ and the cost
    $a(x_e) := 0$. Clearly this transformation does not affect the problem in essence.
    The node load function, which can be found in strongly polynomial time by \reflm{node_load},
    yields the desired node and edge load functions in the original graph~$G$.
\end{proof}

\subsection{Constructing an optimal primal solution}

Now we explain how to find, in strongly polynomial time, an integer optimal
multiflow solving~(\ncpprob) (for a graph~$G$, even node capacites~$c$,
rational node costs $a$, and an integer parameter~$\lambda$) by using an
optimal dual solution $l$ and a function $g$ as in \reflm{full_load}. For this
$g$, there exists an integer $\ell$-geodesic multiflow $F'$ in $G$ satisfying
$F'(v) = g(v)$ for all $v \in VG$ and $F'(e) = g(e)$ for all $e \in EG$, where
$\ell := \bar a + \bar l$. Our goal is to construct one of such multiflows
explicitly.

To do this, we consider the \emph{compact} BD-graph $H$ related to $G_\ell$
(see \refssec{aux_graph}) and put $f$ to be the function on $EH$ corresponding
to $g$. More precisely, let $E'$ be the subset of edges of $H$ neither incident
to $q$ nor contained in the gadgets $\Gamma_w$ ($w \in V^\natural$). By the
construction of $H$, there is a natural bijection $\gamma$ of $E'$ to the set
$(VG_\ell-V^\natural)\cup EG_\ell$. For each $e\in E'$, we set
$f(e):=g(\gamma(e))$. In their turn, the values of $f$ on the edges of a gadget
$\Gamma_w$ are assigned as follows: for the loop $e_w$ at $\theta_w$, set
$f(e_w) := g(w)$, and for each $s\in T$, set $f(e_{w,s}):=\sum(g(e)\colon e\in
E_{s,w})$, where $E_{s,w}$ is the set of edges in $G_\ell$ connecting $V^s$ and
$w$. Finally, for each $s\in T$, we set $f(qs):=h(s)$.

Using the fact that the function $g$ on $VG\cup EG$ is determined by some
integer optimal ($\ell$-geodesic) multiflow $F'$, it follows that the obtained
function $f$ on $EH$ is integer-valued and has zero divergency at all nodes
different from $q$. So $f$ is an IBD-flow in $H$. Moreover, $f$ is generated by
$F'$ as above; in particular, $f$ is good (i.e. satisfies~\refeq{good}). By
\reflm{f-F}, we can find, in strongly polynomial time, an integer
$\ell$-geodesic multiflow $F$ generating~$f$. Then $F$ and $F'$ have the
coinciding node and edge load functions, and the optimality of $F'$ implies
that $F$ is an integer optimal solution to~(\ncpprob) as well, as required.

  \section{Proof of \refprop{aux_bidir_flow}}
\label{sec:prim_h_int}

To complete the proof of \refth{primal_halfint} it remains to prove
\refprop{aux_bidir_flow}, which claims the existence of an IBD-flow saturating the
``locked'' edges. We eliminate the lower capacity constraints (induced by the locked edges)
by reducing the claim to the existence of an IBD-flow with a certain prescribed value.

\subsection{Maximum IBD-flows} \label{ssec:IBD-flows}

Let $\Gamma$ be a bidirected graph with a distinguished source~$q$ and edge
capacities $c \colon E\Gamma \to \Z_+$, as described in \refssec{bidir_flows}.
The classic max-flow min-cut theorem states that the maximum flow value is
equal to the minimum cut capacity. A bidirected version of this theorem
involves a somewhat more complicated object, called an \emph{odd barrier}. In
this subsection we give its definition and state the crucial properties (in
Theorems~\ref{th:max_ibd_flow_min_odd_barrier},\;\ref{th:bd_residual},\;\ref{th:bd_barrier_from_flow})
that will be used in the upcoming proof of \refprop{aux_bidir_flow}. These
properties are nothing else than translations, to the language of bidirected
graphs, of corresponding properties established for integer symmetric flows in
skew-symmetric graphs, as we will explain in the Appendix.

Let $X\subseteq V\Gamma-\{q\}$. The \emph{flip} at (the nodes of) $X$ modifies
$\Gamma$ as follows: for each node $v\in X$ and each edge $e$ incident to $v$,
we reverse the direction of $e$ at $v$ (while preserving the directions of
edges at nodes in $VG-X$). For example, if $e=\vec{u}\bvec{v}$ and $u,v\in X$
then $e$ becomes $\bvec{u}\vec{v}$, and if $e=\vec{uv}$ and $u\not\in X\ni v$
then $e$ becomes $\vec{u}\bvec{v}$. BD-graphs $\Gamma$ and $\Gamma'$ are called
\emph{equivalent} if one is obtained by a flip from the other. Note that flips
do not affect bidirected walks in $\Gamma$ in essence and do not change the
maximum value of an IBD-flow in it. We will essentially use flips to simplify
requirements in the definition of odd barriers below.

Next, we employ a special notation to designate certain subsets of edges. For
$X, Y \subseteq V\Gamma$, let $[X,Y]$ denote the set of edges with one endpoint
in $X$ and the other in $Y$. We will often add arrows above $X$ and/or $Y$ to
indicate the subset of edges in $[X,Y]$ directed in one or another way. For
example, $[\bvec X, \vec Y]$ denotes the set of edges that enter both $X$ and
$Y$, ~$[\vec X, Y]$ denotes the set of edges leaving $X$ and having the other
endpoint in~$Y$ (where the direction is arbitrary), and $[\vec X, \bvec X]$
denotes the set of edges that leave $X$ at both endpoints (including twice
leaving loops). When $Y = V\Gamma - X$, the second term in the brackets may be
omitted: $[X]$, $[\vec X]$, and $[\bvec X]$ stand for $[X, V\Gamma - X]$,
$[\vec X, V\Gamma - X]$, and $[\bvec X, V\Gamma - X]$, respectively. Finally,
for a function $\phi$ on the edges, we write $\phi[X,Y]$ (rather than
$\phi([X,Y])$) for $\sum_{e \in [X,Y]} \phi(e)$.

\medskip

\begin{figure}[tb]
    \centering
    \includegraphics{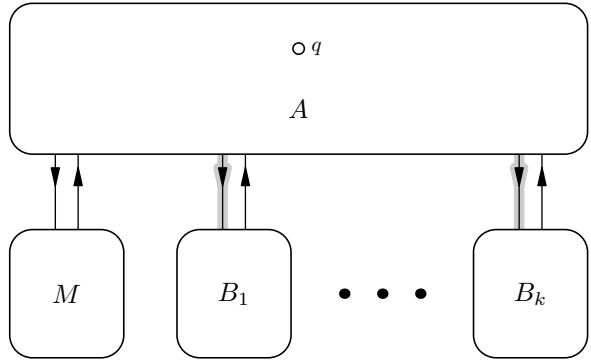}%
    \caption{
        A bidirected odd barrier.
        Grayed edges correspond to odd capacity constraints (w.r.t. $\Gamma'$).
    }
    \label{fig:bd_barrier}
\end{figure}

A tuple $\calB = (\Gamma'\,|\, A, M; B_1, \ldots, B_k)$, where $\Gamma'$ is
some BD-graph equivalent to $\Gamma$, is called an \emph{odd barrier} for
$\Gamma$ if the following conditions hold with respect to $\Gamma'$
(see~\reffig{bd_barrier}):
\begin{numitem}
\label{eq:bd_odd_barrier}
    \begin{itemize} \compact
        \item[(i)]
        $A, M, B_1, \ldots, B_k$ give a partition of $V\Gamma'=V\Gamma$, and $q \in A$.

        \item[(ii)]
        For each $i = 1, \ldots, k$, ~$c[\vec A,B_i]$ is odd.

        \item[(iii)]
        For distinct $i, j = 1, \ldots, k$, ~$c[B_i,B_j] = 0$.

        \item[(iv)]
        For each $i = 1, \ldots, k$, ~$c[B_i,M]= 0$.
    \end{itemize}
\end{numitem}
The \emph{capacity} of $\calB$ is defined to be
\begin{equation}
\label{eq:barrier_cap}
    c(\calB) := 2c[\vec A, \bvec A] + c[\vec A] - k.
\end{equation}
  \begin{theorem}[Max IBD-Flow Min Odd Barrier Theorem]
 \label{th:max_ibd_flow_min_odd_barrier}
For $\Gamma, c, q$ as above, the maximum IBD-flow value is equal to the minimum
odd barrier capacity. A (feasible) IBD-flow~$g$ and an odd barrier $\calB =
(\Gamma'\,|\, A, M; B_1, \ldots, B_k)$ for $\Gamma$ have maximum value and
minimum capacity, respectively, if and only if the following conditions hold
with respect to $\Gamma'$:
    \begin{itemize} \compact
        \item[\rm(i)]  $g[\vec A, \bvec A] = c[\vec A, \bvec A]$ and $g[\bvec A, \vec A] = 0$;
        \item[\rm(ii)] $g[\vec A,M] = c[\vec A,M]$ and $g[\bvec A,M] = 0$;
        \item[\rm(iii)] for each $i = 1, \ldots, k$, either $g[\vec A,B_i] = c[\vec A,B_i] - 1$ and
        $g[\bvec A, B_i] = 0$, or $g[\vec A,B_i] = c[\vec A,B_i]$ and $g[\bvec A,B_i] = 1$.
    \end{itemize}
\end{theorem}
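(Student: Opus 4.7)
The plan is to derive \refth{max_ibd_flow_min_odd_barrier} by translating to the corresponding min--max duality for integer symmetric flows in skew-symmetric graphs, as promised in the Appendix. The standard tool is the double-cover $\tilde\Gamma$: each node $v$ of $\Gamma$ splits into a $\sigma$-symmetric pair $\{v^+,v^-\}$, and each bidirected edge lifts to a $\sigma$-pair of ordinary arcs. Feasible IBD-flows of value $n$ in $\Gamma$ correspond to feasible integer $\sigma$-symmetric $q^+$--$q^-$ flows of value $2n$ in $\tilde\Gamma$; flips of $\Gamma$ correspond to local swaps $v^+\leftrightarrow v^-$ in $\tilde\Gamma$. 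Under this correspondence, odd barriers defined by~\refeq{bd_odd_barrier} are precisely what one obtains by projecting a $\sigma$-symmetric analogue of a ``T-cut-with-odd-components'' from $\tilde\Gamma$ down to $\Gamma$: the set $A$ comes from the $q^+$-side of the cut, and each $B_i$ comes from a $\sigma$-symmetric residual strongly connected component with odd boundary capacity.

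For the weak direction $\val{g}\le c(\calB)$, one can either appeal to the corresponding inequality in the skew-symmetric setting or argue directly in $\Gamma'$. In the direct approach, $q\in A$ together with $\div_g(v)=0$ for $v\ne q$ give, after summing divergences over $A$, the cut identity
\begin{equation*}
    \val{g} = 2g[\vec A, \bvec A] + g[\vec A] - 2g[\bvec A, \vec A] - g[\bvec A];
\end{equation*}
dropping the non-positive $\bvec A$-terms and applying $g\le c$ yields $\val{g}\le 2c[\vec A,\bvec A]+c[\vec A]$. The ``$-k$'' is then recovered by a parity argument for each $B_i$: by~\refeq{bd_odd_barrier}(iii)--(iv), every edge incident to $B_i$ has its other endpoint in $A$, and the relation $\sum_{v\in B_i}\div_g(v)=0$ combined with the oddness of $c[\vec A,B_i]$ and integrality of $g$ forces a slack of at least one unit in the bound contributed by $B_i$.

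For the hard direction I would start from a maximum integer $\sigma$-symmetric $q^+$--$q^-$ flow $\tilde g$ in $\tilde\Gamma$ and form the residual graph $\calR$, which inherits a $\sigma$-action. Let $\tilde A$ be the set of nodes reachable from $q^+$ in $\calR$; then $\sigma(\tilde A)$ is the set of nodes from which $q^-$ is reachable, and $\tilde A\cap\sigma(\tilde A)=\emptyset$, for otherwise an augmenting $q^+$--$q^-$ walk would exist. The nodes outside $\tilde A\cup\sigma(\tilde A)$ decompose into $\sigma$-symmetric residual strongly connected components, and an Edmonds--Gallai-style dichotomy classifies each such component as odd (contributing to some $B_i$) or even (contributing to $M$), according to the parity of the total residual capacity on its boundary with $\tilde A$. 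Projecting this data back to $\Gamma$ gives the partition $(A, M, B_1, \ldots, B_k)$, and a suitable flip $\Gamma\mapsto\Gamma'$ is chosen so that all $A$-boundary edges appear as ``leaving $A$'' at their $A$-endpoint, matching~\refeq{bd_odd_barrier}(ii). Conditions (i)--(iii) of the theorem are then recognized as the exact complementary slackness relations between $\tilde g$ and the cut defined by $\tilde A$.

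I expect the main obstacle to lie on the skew-symmetric side: one needs the correct residual-graph definition respecting $\sigma$ (reverse arcs must come in $\sigma$-pairs), and a rigorous Edmonds--Gallai-type decomposition of the unreachable part into odd versus even symmetric components. Once this machinery is in place, the oddness of $c[\vec A,B_i]$ and the two alternatives listed in condition~(iii) of \refth{max_ibd_flow_min_odd_barrier} drop out naturally from the parity classification, and verifying that (i)--(iii) are equivalent to joint optimality of $g$ and $\calB$ becomes a routine comparison against the chain of inequalities used in the weak-duality argument.
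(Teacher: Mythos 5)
Your overall plan — pass to the skew-symmetric double cover $\tilde\Gamma$, argue weak duality via a divergence/cut identity plus a per-$B_i$ parity slack, and extract a minimum odd barrier from residual reachability for the hard direction — is precisely the route the paper takes in its Appendix, where the theorem is deduced from Tutte's Max ISK-Flow Min Odd Barrier theorem via the BD/SK correspondence. The weak direction you sketch (the cut identity $\val{g} = 2g[\vec A,\bvec A]+g[\vec A]-2g[\bvec A,\vec A]-g[\bvec A]$, dropping $\bvec A$-terms, then a parity slack of $1$ per $B_i$) is sound.

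However, your sketch of the hard direction contains a genuine error that breaks the argument. You let $\tilde A$ be the set of nodes reachable from $q^+$ in the residual graph and claim $\tilde A \cap \sigma(\tilde A)=\emptyset$ ``for otherwise an augmenting $q^+$--$q^-$ walk would exist.'' That is false for skew-symmetric flows: the maximality criterion is \emph{not} ordinary residual reachability, but \emph{$c_f$-regular} reachability, where a residual path may use a pair of arc mates $a,a'$ only if $c_f(a)=c_f(a')\ge 2$. A maximum ISK-flow can, and typically does, leave $q^-$ ordinarily reachable from $q^+$ in the residual graph; what is excluded is a $c_f$-regular $q^+$--$q^-$ path (\refth{sk_residual}). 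Consequently the set $R$ of $c_f$-regularly reachable nodes and its mate $R'$ can intersect, and it is exactly this intersection $R\cap R'$ that supplies the odd components $B_1,\dots,B_k$ (\refth{sk_barrier_from_flow} / \refth{bd_barrier_from_flow}); the unreachable region is $M$, not the source of the $B_i$'s as you propose. Your ``Edmonds--Gallai-style dichotomy of the unreachable part by boundary parity'' would therefore be looking in the wrong place and would not produce the barrier. You flagged the residual definition as the main obstacle, but the concrete claim you committed to is the wrong resolution of it. (As a minor point, the value correspondence is $\val{g}=\div_{f}(q^+)$, not $2\val{g}$; with the paper's conventions the factor of two is already absorbed in the definition of divergence.)

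Fixing the gap means either invoking Tutte's theorem for ISK-flows directly, as the paper does, or, if you want a self-contained argument, working with $c_f$-regular reachability throughout, letting $R$ and $R'$ overlap, taking $A:=R-R'$, $M:=V-(R\cup R')$, and defining the $B_i$ as the weakly connected components of the residual graph induced on $R\cap R'$, then verifying oddness and the complementary slackness conditions from the $c_f$-regularity constraint.
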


Note that there may exist many minimum capacity odd barriers for $\Gamma, c,
q$. It is well-known that in a usual edge-capacitated digraph with a source $s$
and a sink $t$, the set of nodes reachable by paths from $s$ in the residual
digraph of a maximum $s$--$t$ flow $f$ determines a minimum capacity $s$--$t$
cut. Moreover, this minimum cut does not depend on the choice of $f$ and,
therefore, may be regarded as the \emph{canonical} one.

A similar phenomenon takes place for maximum IBD-flows and minimum odd barriers
(and we will essentially use this in the proof of \refprop{aux_bidir_flow}). To
describe this, consider an IBD-flow $g$ in $\Gamma$. The \emph{residual}
BD-graph $\Gamma_g$ endowed with the \emph{residual} capacity $c_g \colon
E\Gamma_g \to \Z_+$ is constructed in a similar way as for usual flows. More
precisely, $V\Gamma_g = V\Gamma$ and the edges of $\Gamma_g$ are as follows:
\begin{numitem}
\label{eq:residual_edges}
    \begin{itemize} \compact
        \item[(i)] each edge $e \in E\Gamma$ with $g(e) < c(e)$
        whose residual capacity is defined to be $c_g(e) := c(e) - g(e)$, and
        \item[(ii)]
        the \emph{reverse} edge $e^R$ to each edge $e \in E\Gamma$ with $g(e) > 0$;
        the directions of $e^R$ at the endpoints are reverse to those of~$e$ and
        the residual capacity is $c_g(e^R) := g(e)$.
    \end{itemize}
\end{numitem}

A bidirected walk $P$ in $\Gamma_g$ is called \emph{$c_g$-simple} if $P$ passes
each edge $e$ at most $c_g(e)$ times. If $P$ is a $c_g$-simple closed $q$--$q$
walk leaving its end $q$ twice, we can increase the value of $g$ in $\Gamma$
by~2, by sending one unit of flow along~$P$. So the existence (in $\Gamma_g$)
of such a walk $P$, which is called \emph{$(\Gamma,g)$-residual}, implies that
$g$ is not maximum. A converse property holds as well.
   \begin{theorem}
\label{th:bd_residual}
    An IBD-flow $g$ in $\Gamma$ is maximum if and only if there is no $(\Gamma,g)$-residual walk.
\end{theorem}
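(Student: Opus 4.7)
The ``only if'' direction is immediate from the discussion preceding the theorem: given a $(\Gamma,g)$-residual walk $P$, adding one unit of flow along $P$ (with reverse edges of $P$ interpreted back in $\Gamma$ via~\refeq{residual_edges}) produces an IBD-flow of value $\val{g}+2$; the $c_g$-simplicity of $P$ guarantees feasibility after augmentation, and the fact that $P$ leaves $q$ twice gives the divergence increase of~$2$. Hence $g$ is not maximum.

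For the nontrivial ``if'' direction, my plan is to reduce the claim to the classical augmenting-path theorem for integer symmetric flows in skew-symmetric graphs, which the section already announces as its underlying framework. I would form the double cover $\hat\Gamma$ of $\Gamma$ with involution $\sigma$: split each node $v \in V\Gamma$ into a $\sigma$-paired pair $v^+, v^-$; replace each edge $e$ of $\Gamma$ by a $\sigma$-symmetric pair of arcs whose orientations at the copies of the endpoints encode the bidirections of $e$; and split $q$ into a source/sink pair $q^+, q^-$. Under this construction, IBD-flows in $\Gamma$ biject with integer $\sigma$-symmetric $q^+$--$q^-$ flows in $\hat\Gamma$, the bijection preserving capacities and doubling values; moreover, the residual BD-graph $(\Gamma_g, c_g)$ corresponds to the residual digraph of the associated symmetric flow, and $(\Gamma,g)$-residual walks correspond precisely to \emph{regular} (symmetry-respecting) $q^+$--$q^-$ augmenting paths, with ``leaving $q$ twice'' matching the endpoint condition and $c_g$-simplicity matching the standard simplicity/regularity requirement. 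The theorem then follows from the classical fact that an integer symmetric flow in a skew-symmetric graph is maximum if and only if no regular augmenting path exists.

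The main obstacle is not conceptual but technical: one must verify faithfully that the residual construction~\refeq{residual_edges} commutes with the passage to the double cover, and that a closed $c_g$-simple $q$--$q$ walk in $\Gamma_g$ leaving $q$ twice matches a regular $q^+$--$q^-$ augmenting path in the lifted residual digraph. A naive attempt at a direct proof---taking a maximum IBD-flow $g^*$ and decomposing $g^* - g$ into bidirected augmenting walks---breaks down because of divergence sign cancellations at edges that are bidirected at a common endpoint, cancellations which the skew-symmetric lift resolves cleanly. Spelling out this translation is precisely the role of the Appendix; it simultaneously supplies~\refth{max_ibd_flow_min_odd_barrier} and the present theorem.
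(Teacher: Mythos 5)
Your approach matches the paper's: the Appendix constructs exactly the double cover you describe (the SK-graph $G$ with $G^*=\Gamma$), establishes the walk and flow correspondences, identifies $(\Gamma,g)$-residual walks with $c_f$-regular $s$--$s'$ paths in the residual SK-graph, and then cites Tutte's augmenting-path theorem for ISK-flows (Theorem A.3), from which \refth{bd_residual} follows immediately. One small slip: the bijection between IBD-flows and symmetric ISK-flows preserves the value rather than doubling it, since each edge of $\Gamma$ at $q$ lifts to one arc leaving $s$ together with its mate entering $s'$ (and a loop at $q$ lifts to two parallel arcs leaving $s$, matching the double-counting of loops in $\div_{f^*}(q)$); the doubling you have in mind occurs between multiflow values and BD-flow values in \refssec{bidir_flows}, not here. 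This does not affect the validity of the reduction.
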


When we are given a maximum IBD-flow $g$, a certain minimum odd barrier can be
constructed by considering the residual graph $\Gamma_g$. Namely, let $\vec
R_{\Gamma,g}$ (resp. $\bvec R_{\Gamma,g}$) be the set of nodes $v$ that are
reachable by a $(\Gamma,g)$-residual $q$--$v$ walk that leaves $q$ and
enters~$v$ (resp. leaves both $q$ and $v$). Then $q \notin \bvec R_{\Gamma,g}$,
by the maximality of~$g$.

 \begin{theorem}
 \label{th:bd_barrier_from_flow}
Let $g$ be a maximum IBD-flow for $\Gamma,c,q$.  Define $A := (\vec R- \bvec R)
\cup (\bvec R- \vec R)$ and $M := V\Gamma - (\vec R\cup \bvec R)$, where $\vec
R:=\vec R_{\Gamma,g}$ and $\bvec R:=\bvec R_{\Gamma,g}$. Let $B_1, \ldots, B_k$
be the node sets of weakly connected components of the underlying undirected
subgraph of $\Gamma_g$ induced by $\vec R \cap \bvec R$. Define $\Gamma'$ to be
the BD-graph obtained from $\Gamma$ by flipping the set $\bvec R - \vec R$
(contained in $A$). Then $\calB_g := (\Gamma'\,|\, A, M; B_1, \ldots, B_k)$ is
a minimum odd barrier.
  \end{theorem}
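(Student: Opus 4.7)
The plan is twofold: (a) verify directly that $\calB_g = (\Gamma' \mid A, M; B_1, \ldots, B_k)$ meets the four conditions of the odd-barrier definition~\refeq{bd_odd_barrier}, and (b) confirm that $g$ together with $\calB_g$ satisfies the primal-dual optimality conditions (i)--(iii) of \refth{max_ibd_flow_min_odd_barrier}. Since $g$ is a maximum IBD-flow, (b) forces $c(\calB_g) = \val{g}$, and \refth{max_ibd_flow_min_odd_barrier} then yields the minimality of $\calB_g$.

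The initial set-theoretic checks are straightforward: $A$, $M$, and $\vec R \cap \bvec R = B_1 \cup \ldots \cup B_k$ partition $V\Gamma$; and $q \in A$ because $q \in \vec R$ trivially while $q \notin \bvec R$ by the maximality of $g$ via \refth{bd_residual}. The main conceptual device is a \emph{reachability audit in $\Gamma'$}. Since flipping at a node preserves transit pairs, every residual walk in $\Gamma$ remains a residual walk in $\Gamma'$; combined with the flip of $\bvec R - \vec R$, this yields that in $\Gamma'$ residual reachability from $q$ is ``entry only'' at nodes of $A$, ``both parities'' at nodes of $\cup_i B_i$, and ``none'' at nodes of $M$.

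The structural odd-barrier conditions (iii) and (iv) of~\refeq{bd_odd_barrier} follow by contradiction. If an edge $e$ of positive capacity joined $B_i$ to $M$, or two distinct $B_i$ and $B_j$, then at least one of $e$ and $e^R$ would be residual in $\Gamma'$, and extending a suitable residual walk reaching $B_i$ by this edge would make a node of $M$ or of $B_j$ reachable -- contradicting either $M \cap (\vec R \cup \bvec R) = \emptyset$ or the choice of $B_1, \ldots, B_k$ as the weakly connected components of $\Gamma_g[\vec R \cap \bvec R]$. The same reachability audit yields the saturation conditions (i) and (ii) of \refth{max_ibd_flow_min_odd_barrier}: any edge of $[\vec A, \bvec A] \cup [\vec A, M]$ with $g(e) < c(e)$, or any edge of $[\bvec A, \vec A] \cup [\bvec A, M]$ with $g(e) > 0$, would admit a residual edge in $\Gamma'$ leaving $A$, propagating exit-reachability to a node of $A \cup M$ -- impossible by the previous paragraph.

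The hard part is the combined parity statement: condition (ii) of~\refeq{bd_odd_barrier} (oddness of $c[\vec A, B_i]$ in $\Gamma'$) together with the dichotomy in condition (iii) of \refth{max_ibd_flow_min_odd_barrier}, namely $g[\vec A, B_i] \in \{c[\vec A, B_i],\ c[\vec A, B_i] - 1\}$ matched by $g[\bvec A, B_i] \in \{1, 0\}$. I would derive this by summing $\div_g(v) = 0$ over $v \in B_i$: by conditions~(iii) and~(iv) already established, only edges of $[A, B_i]$ and edges internal to $B_i$ contribute to the sum, and the latter (including loops, which are counted with a factor of $2$) contribute an even integer. The resulting parity relation between $g[\vec A, B_i]$ and $g[\bvec A, B_i]$, combined with the saturation deduced from reachability and with the weak connectedness of $B_i$ in $\Gamma_g[\vec R \cap \bvec R]$, pins down the stated dichotomy and forces $c[\vec A, B_i]$ to be odd. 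Summing via~\refeq{barrier_cap} then verifies $c(\calB_g) = \val{g}$, closing the argument. The bookkeeping I expect to be most delicate concerns edges lying entirely within $A$ and the treatment of loops under the flip, since a flip may reverse the direction at one end, at both ends, or at neither, while the symbols $\vec A, \bvec A$ must be interpreted consistently in $\Gamma'$ rather than in $\Gamma$.
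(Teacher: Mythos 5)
The paper does not prove this theorem directly; it translates it into the skew-symmetric setting (Theorem~\ref{th:sk_barrier_from_flow}) and then cites Theorem~3.5 of~\cite{GK-04} and Lemma~2.2 of~\cite{GK-96} for the proof. Your proposal is a direct bidirected proof -- a genuinely different route. The high-level plan (verify the odd-barrier axioms~\refeq{bd_odd_barrier} and the optimality characterization of Theorem~\ref{th:max_ibd_flow_min_odd_barrier}, deducing minimality of $\calB_g$) is sound, and the divergence-parity computation over $B_i$ correctly gives that $g[\vec A, B_i] + g[\bvec A, B_i]$ is even.

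However, the sketch glosses over precisely the places where the work lies, and these are not mere bookkeeping.

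First, the ``reachability audit'' repeatedly extends a residual walk reaching $u\in A$ by one residual edge $e$ and declares the target reachable with a forbidden parity. But $\vec R$ and $\bvec R$ are defined via $c_g$-simple walks (each edge used at most $c_g(e)$ times), and a residual walk to $u$ may already use $e$ up to its residual capacity, so the extension need not be $c_g$-simple. This obstacle is real in bidirected/skew-symmetric flow theory, and overcoming it (by rerouting or truncating so as to respect parity at the visited node) is exactly what Lemma~2.2 of~\cite{GK-96} supplies. Your sketch asserts the extensions without addressing this.

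Second, the oddness of $c[\vec A, B_i]$ is not a consequence of the parity relation alone. The parity gives evenness of $g[A,B_i]$; to get oddness of $c[\vec A, B_i]$ you must first establish the dichotomy $g[\vec A, B_i] \in \set{c[\vec A, B_i], c[\vec A, B_i]-1}$ with $g[\bvec A, B_i]\in\set{1,0}$, i.e.\ the total slack on the $A$--$B_i$ boundary is exactly $1$. Ruling out slack $0$ (else $B_i$ would be unreachable, contradicting $B_i\subseteq\vec R\cap\bvec R$) and slack $\ge 2$ (else one could route a $c_g$-simple augmenting $q$--$q$ walk through $B_i$ using its residual connectivity, contradicting maximality of $g$ via Theorem~\ref{th:bd_residual}) is the heart of the argument, and your sketch waves at it with ``combined with \ldots\ pins down the stated dichotomy'' without giving the routing argument. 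In particular, the claim that two slack units can be combined into an augmenting walk inside a single weakly connected $B_i$ is where the combinatorics really lives, and it again requires $c_g$-simplicity to be maintained. In short: correct target, correct skeleton, but the two load-bearing lemmas are left unproved.
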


An important fact is that the minimum odd barrier $\calB_f$ does not depend on
$g$, and we refer to it as the \emph{canonical} odd barrier for $\Gamma,c,q$.
  \begin{theorem}
  \label{th:canon_bd_barrier}
The sets $\bvec R_{\Gamma,g}$ are same for all maximum IBD-flows $g$ in
$\Gamma$, and similarly for the sets $\vec R_{\Gamma,g}$, the minimum odd
barriers $\calB_f$, and the graphs $\Gamma'$ obtained from $\Gamma$ by flipping
$\bvec R_{\Gamma,g} - \vec R_{\Gamma,g}$.
\end{theorem}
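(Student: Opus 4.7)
The plan is to show that both $\vec R_{\Gamma,g}$ and $\bvec R_{\Gamma,g}$ are the same for every maximum IBD-flow $g$; once this is established, the sets $A$ and $M$ in the odd barrier $\calB_g$ and the subset $\bvec R-\vec R$ that is flipped to produce $\Gamma'$ all coincide by their explicit definitions in Theorem~\ref{th:bd_barrier_from_flow}. The remaining piece --- invariance of the partition $\{B_1,\ldots,B_k\}$ --- would follow once I also check that the weak connectivity of $\Gamma_g$ restricted to $\vec R\cap \bvec R$ does not depend on $g$; for this I would argue that any edge whose endpoints both lie in $\vec R\cap \bvec R$ is residual with respect to every maximum IBD-flow, or at least that the weak components it helps to form are unchanged.

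The natural route is through the skew-symmetric lift promised in the Appendix. A BD-graph $\Gamma$ with source $q$ lifts to a skew-symmetric digraph $\tilde\Gamma$ equipped with an involution $\sigma$, source $q_+$ and antipodal sink $q_-$; IBD-flows in $\Gamma$ correspond bijectively to $\sigma$-symmetric integer $q_+$--$q_-$ flows in $\tilde\Gamma$, and maximality is preserved by the correspondence. A $(\Gamma,g)$-residual walk that ends at $v$ by entering (resp.\ leaving) $v$ translates into an ordinary residual walk in $\tilde\Gamma_{\tilde g}$ from $q_+$ to $v_-$ (resp.\ to $v_+$). Under this dictionary, $\vec R_{\Gamma,g}$ is exactly the set of $v$ for which $v_-$ is reachable from $q_+$ in $\tilde\Gamma_{\tilde g}$, and $\bvec R_{\Gamma,g}$ is the analogous set for $v_+$, so the claim reduces to the classical statement that the set of nodes residually reachable from a source is common to all maximum flows in an ordinary capacitated digraph.

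For the reduction, take two maximum IBD-flows $g,g'$, pass to the corresponding symmetric flows $\tilde g,\tilde g'$, and view the difference $\tilde g-\tilde g'$ as a $\sigma$-symmetric circulation supported on $\tilde\Gamma_{\tilde g}$ (it avoids $q_\pm$ because both flows have the same value). Decomposing this circulation into directed cycles of $\tilde\Gamma_{\tilde g}$ and applying the standard cycle-exchange argument converts a residual $q_+$--$v_\pm$ walk in $\tilde\Gamma_{\tilde g}$ into one in $\tilde\Gamma_{\tilde g'}$, proving equality of the reachability sets. The main obstacle is the bookkeeping of the skew-symmetric translation: one has to verify that capacities, endpoint orientations, and the doubled contribution of loops to divergences are faithfully encoded in $\tilde\Gamma$, so that bidirected residual walks --- including those exploiting loops twice at $q$ or $v$ --- correspond cleanly to ordinary residual walks in $\tilde\Gamma_{\tilde g}$, and so that the classical digraph invariance transports back to the bidirected setting. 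Once this dictionary is in place, the weak components on $\vec R\cap \bvec R$ are determined by the $\sigma$-invariant residual structure that all maximum flows share, finishing the proof.
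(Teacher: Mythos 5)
Your lift to the skew-symmetric graph $\tilde\Gamma$ is indeed the bridge the paper uses (the proof of Theorem~\ref{th:canon_bd_barrier} is delegated to Theorem~\ref{th:canon_sk_barrier} via the dictionary developed in the Appendix), but the reduction you want to perform after the lift is not sound.

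The gap is in the sentence asserting that, once lifted, ``the claim reduces to the classical statement that the set of nodes residually reachable from a source is common to all maximum flows in an ordinary capacitated digraph.'' Two things go wrong. First, the reachability underlying $\vec R_{\Gamma,g}$ and $\bvec R_{\Gamma,g}$ does not translate to ordinary residual reachability in $\tilde\Gamma_{\tilde g}$: a $(\Gamma,g)$-residual walk is a $c_g$-simple bidirected walk, and its skew-symmetric image is a $c_{\tilde g}$-\emph{regular} path, i.e.\ a path that may use a pair of arc mates only when their common residual capacity is at least $2$. This is a genuine restriction; the regular-reachable set can be strictly smaller than the ordinary reachable set, so the classical invariance of residual reachability for ordinary max flows does not apply. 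Second, even for ordinary reachability, a maximum ISK-flow is a maximum element only among \emph{symmetric} flows in $\tilde\Gamma$, not among all $s$--$s'$ flows; it need not be a maximum ordinary flow, so the classical theorem is being invoked for the wrong class of flows. Your closing cycle-exchange sketch would then have to carry both the symmetry constraint and the regularity constraint through each exchange, and you give no argument for either.

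What the paper actually does (Theorem~\ref{th:canon_sk_barrier}) is more tailored and avoids cycle exchanges altogether: it picks a maximum ISK-flow $f$ whose regular-reachable set $R_f$ is inclusion-wise minimal, forms the barrier $\calB_f=(A,M;B_1,\dots,B_k)$ of Theorem~\ref{th:sk_barrier_from_flow}, and then, for an arbitrary maximum $g$, uses the tightness in the value equality $c(\calB_f)=\val g$ (together with Tutte's parity fact that $g\langle A,B_i\rangle$ is even) to pin down the residual picture of $g$ across the barrier: all arcs $(A,A'\cup M)$ saturated, all arcs $(A'\cup M,A)$ empty, and exactly one residual arc $a_i$ of capacity $1$ from $A$ into each $B_i$, with $a_i'$ the only residual arc out of $B_i$ into $A'$. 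The $c_g$-regularity is then exactly what prevents a path from exiting $B_i$ through $a_i'$ after having entered through $a_i$, so $R_g\subseteq A\cup B_1\cup\dots\cup B_k=R_f$, and minimality forces equality. You would need to supply an argument of this strength; the appeal to the classical max-flow fact does not substitute for it.
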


\subsection{Proof of \refprop{aux_bidir_flow}}

In fact, we have freedom of choosing any of the two forms (expensive or
compact) of~$H$ to prove \refprop{aux_bidir_flow} in full, as it is easy to see
that the claims in these cases are reduced to each other. We prefer to deal
with the expensive form, taking advantage from certain nice structural features
arising in this case. One reason for our choice is that any IBD-flow in the
expensive $H$ is automatically good, as explained in
Remark~\ref{rem:compact_vs_expensive}.

We know that there exists a good fractional bidirected $q$-flow $f$ in $H$ that
saturates the set $E_0$ of locked edges, and our goal is to show the existence
of an IBD-flow saturating $E_0$. Recall that any edge $e\in E_0$ is generated
by some node $v$ of $G$, i.e. $e=e_v$.

\medskip

It will be convenient for us to construct the desired IBD-flow without
explicitly imposing the ``lower capacities'' on the locked edges. For this purpose,
we modify $H$ as follows.

First, we add a loop $\bvec{q}\vec{q}$ with infinite capacity (entering $q$ twice).
Also we add to $H$ a node $z$, which is regarded as a new source.

Second, let $E_0$ contain an edge $e_v = \vec{v}^1 \vec{v}^2$ generated by a
vertex $v\in VG_\ell$ in some zone $V^s$, $s \in T$. We delete $e_v$ from $H$
and, instead, add two edges $\vec{v}^1\bvec{z}$ and $\vec{z}\vec{v}^2$ with
capacity $c(v)$ each.

Third, let $E_0$ contain the loops $e_{w^i}$ ($i=1,\ldots,c(w)$) for some
central node $w$ of $G_\ell$. We replace each $e_{w^i}$ (having unit capacity)
by edge $\vec{\vphantom{\theta} z}\bvec{\theta_{w^i}}$ with capacity 2; we call
it the \emph{root edge} at $\theta_{w^i}$.

We denote the resulting BD-graph by $H^1$ and keep the previous notation $c$
for its edge capacities. The above $q$-flow $f$ is transformed, in an obvious
way, into a feasible $z$-flow in $H^1$, denoted by $f$ as before. Note that
this $f$ saturates all edges created from those in $E_0$ (i.e. from $e_v$ and
$e_{w^i}$ as above); these edges leave $z$ and the value of $f$ is maximum
among the feasible $z$-flows in $H^1$ and is equal to $2c(E_0)$.

\medskip

Let $g$ be a maximum IBD-flow in $H^1$. We are going to prove that $\val{f} =
\val{g}$. This would imply that the corresponding IBD-flow in $H$ saturates
$E_0$ as required. To this aim, consider the \emph{canonical} odd barrier
$\calB = (H^2\,|\, A,M; B_1,\ldots,B_k)$ for $H^1,c,z$ (see
\refth{canon_bd_barrier}). Here $H^2$ is the BD-graph (with the source $z$)
obtained from $H^1$ according to \refth{bd_barrier_from_flow} (i.e.
$H^2:=\Gamma'$ for $\Gamma:=H^1$). From now on, speaking of edge directions,
the capacities $c$ and the flow $g$, we mean those in $H^2$, unless explicitly
stated otherwise.

We have (cf.~\refeq{barrier_cap})
  \begin{equation}
\label{eq:val_g}
    \val{g} = c(\calB) = 2c[\vec{A}, \bvec{A}] + c[\vec{A}] - k.
\end{equation}

The following assertion is crucial.
\begin{lemma}
\label{lm:b_sets}
    For each $p=1,\ldots,k$:
    \begin{itemize}
\item[\rm(i)] $B_p = \set{\theta_{w^i}}$ for some $w\in V^\natural$ and $i\in\{1, \ldots, c(w)\}$;
\item[\rm(ii)] $e_{w^i}$ is not locked (so $H^1$ contains the loop $e_{w^i}$ but not the root edge
at $\theta_{w^i}$);
\item[\rm(iii)] among the edges (legs) connecting $A$ and $B_p$, one edge leaves~$A$
and the other edges enter~$A$.
  \end{itemize}
\end{lemma}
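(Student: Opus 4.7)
The plan is to apply the characterization of the canonical odd barrier from \refth{bd_barrier_from_flow} and \refth{canon_bd_barrier}, which describes $\calB$ in terms of the residual graph $H^1_g$ of any maximum IBD-flow $g$ in $H^1$. I would prove the three parts in the order (ii), (i), (iii), since (ii) isolates a degenerate configuration that restricts the form of $B_p$, after which (i) and (iii) follow from a local analysis around the nodes $\theta_{w^i}$. Throughout I write $\vec R := \vec R_{H^1,g}$ and $\bvec R := \bvec R_{H^1,g}$.

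For (ii), the key observation is that when $e_{w^i}$ is locked, every edge of $H^1$ incident to $\theta_{w^i}$ (namely the root edge $\vec z \bvec{\theta_{w^i}}$ of capacity $2$ and the $|T|$ legs $e_{w^i,s}$ of capacity $1$) enters $\theta_{w^i}$. The conservation constraint $\div_g(\theta_{w^i})=0$ then forces $g \equiv 0$ on all these edges. Consequently $H^1_g$ has no reverse edges at $\theta_{w^i}$, and every residual edge at $\theta_{w^i}$ still enters it; no residual walk can leave $\theta_{w^i}$, so $\theta_{w^i}\notin\bvec R$, giving $\theta_{w^i}\in \vec R-\bvec R\subseteq A$, contradicting $\theta_{w^i}\in B_p$.

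For (i), I would first note that every edge of $H^1$ has even capacity except the loops $e_{w^i}$ and the legs $e_{w^i,s}$, each of unit capacity (the ``$\infty$'' values can be chosen even, root edges have capacity $2$, and the inherited capacities $c(v)$ are even since we reset $c:=2c$). Loops do not contribute to $c[\vec A, B_p]$, so the oddness condition~(ii) of \refeq{bd_odd_barrier} forces at least one leg $e_{w^i,s}$ in $[A, B_p]$, placing $\theta_{w^i}$ or $\theta_{w,s}$ in $B_p$. A propagation argument then rules out all ``bulk'' node types: if any of $q$, $v^1$, $v^2$, or $\theta_{w,s}$ lay in $B_p$, conditions~(iii) and (iv) of \refeq{bd_odd_barrier} would force each of its $\infty$-capacity neighbors into $B_p$ as well (else $c[B_p,M]$ or $c[B_p,B_{p'}]$ would be nonzero), propagating through the zone of $G_\ell$ and eventually absorbing the source $z$ into $B_p$, contradicting $z\in A$. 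Since distinct $\theta_{w^i}$'s share no edges (they meet in $H^1$ only through nodes $\theta_{w,s}$, which we just placed in $A$), each nonempty weakly connected component of $\vec R\cap\bvec R$ is a singleton $\{\theta_{w^i}\}$; combined with (ii) this gives (i). For (iii), $[A, B_p]$ then comprises exactly the $|T|$ unit-capacity legs at $\theta_{w^i}$; the flip into $H^2$ reverses such a leg at $\theta_{w,s}$ iff $\theta_{w,s}\in \bvec R-\vec R$, and only non-flipped legs contribute to $c[\vec A, B_p]$. Combining oddness of $c[\vec A, B_p]$ with the optimality constraint $g[\vec A, B_p]\in \{c[\vec A, B_p]-1,\; c[\vec A, B_p]\}$ from \refth{max_ibd_flow_min_odd_barrier} and the conservation identity $\sum_s g(e_{w^i,s}) = 2g(e_{w^i})\le 2$, I would conclude $c[\vec A, B_p]=1$, so exactly one leg leaves $A$.

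The main obstacle is the propagation argument in (i): tracing how $\infty$-capacity edges absorb the source $z$ once a ``bulk'' node is placed in $B_p$ requires a careful walk through the zone structure of $G_\ell$ and the gadget attachments. I plan to carry out this analysis entirely in $H^1$, using that the barrier conditions on capacity sums $c[\cdot,\cdot]$ are flip-invariant, which sidesteps the most awkward bidirected-direction bookkeeping.
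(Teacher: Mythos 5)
Your argument has serious gaps in all three parts, and in particular you misread the construction of $H^1$.

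\textbf{Part (ii).} The root edge is $\vec{\vphantom{\theta}z}\bvec{\theta_{w^i}}$, which \emph{leaves} both $z$ and $\theta_{w^i}$ (this is exactly what must happen, since the root edge simulates the locked loop $e_{w^i}$, which leaves $\theta_{w^i}$ twice). It does not enter $\theta_{w^i}$, so your premise ``every edge of $H^1$ incident to $\theta_{w^i}$ enters it'' is false. Conservation at $\theta_{w^i}$ gives $g(\text{root})=\sum_s g(e_{w^i,s})$, which certainly does not force $g\equiv 0$, and with $g(\text{root})>0$ the residual graph contains edges leaving $\theta_{w^i}$ (either the residual of the root edge or reverses of flow-carrying legs). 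The conclusion $\theta_{w^i}\notin\bvec R$ therefore does not follow. The paper instead takes residual walks $P$ (entering $\theta_{w^i}$) and $Q$ (leaving $\theta_{w^i}$), guaranteed to exist because $\theta_{w^i}\in\vec R\cap\bvec R$, classifies the possible last edges of $Q$ via the partition of legs into $E'$ (zero flow) and $E''$ (unit flow), and derives a contradiction with $v\in A=\vec R-\bvec R$ in $H^2$.

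\textbf{Part (i).} Your propagation argument does not work. Conditions \refeq{bd_odd_barrier}(iii) and (iv) only force the $\infty$-neighbors of a node in $B_p$ to lie outside $M$ and outside every $B_j$ with $j\ne p$; they may perfectly well lie in $A$. (Indeed an $\infty$-edge from $B_p$ into $A$ that \emph{enters} $A$ contributes nothing to $c[\vec A]$ or $c[\vec A,B_p]$, so there is no parity or finiteness obstruction.) Hence there is no forced propagation and no contradiction with $z\in A$. What actually makes (i) work in the paper is a tool you did not use at all: the automorphisms $\pi_{w,i,j}$ of the expensive graph $H^1$ swapping the copies $\theta_{w^i}$ and $\theta_{w^j}$. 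Because the barrier $\calB$ is \emph{canonical} (\refth{canon_bd_barrier}), it is preserved by every such automorphism, which gives \refeq{theta_preserved}: all copies $\theta_{w^j}$ lie in $A$ simultaneously or in some $B$-set simultaneously, and are flipped simultaneously. From this, together with evenness of $c(w)$, one shows that the $\theta_{w^j}$ lie in pairwise distinct $B$-sets, and since $\pi_{w,i,j}$ fixes every other node while permuting the pairwise disjoint $B$-sets, each such $B$-set must be a singleton. That automorphism-invariance step is the heart of (i) and is absent from your proposal.

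\textbf{Part (iii).} Your inequalities only give $c[\vec A,B_p]\le 3$ (from $g[\vec A,B_p]\ge c[\vec A,B_p]-1$ and $g[\vec A,B_p]\le\sum_s g(e_{w^i,s})=2g(e_{w^i})\le 2$), hence $c[\vec A,B_p]\in\{1,3\}$. You assert ``I would conclude $c[\vec A,B_p]=1$'', but you have not ruled out $c[\vec A,B_p]=3$. In that case \refth{max_ibd_flow_min_odd_barrier}(iii) and the conservation constraint give exactly $g(e_{w^i})=1$, $g[\vec A,B_p]=2$, $g[\bvec A,B_p]=0$, which is perfectly consistent with everything you have used. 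The paper rules this case out by again invoking $\theta_{w^i}\in\bvec R$, taking a residual walk $Q$ leaving $\theta_{w^i}$, and showing its last edge can be neither the (saturated) loop nor a reverse leg without contradicting $v\in A$ in $H^2$. This residual-walk argument is essential, and you cannot ``sidestep the bidirected-direction bookkeeping'' — it is precisely the flip/direction analysis in $H^2$ that delivers the contradiction.
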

  \begin{proof}
By the constructions of $H$ and $H^1$, for any $w\in V^\natural$ and distinct
$i, j = 1, \ldots, c(w)$, there is an automorphism $\pi=\pi_{w,i,j}$ of $H^1$
that swaps $\theta_{w^i}$ and $\theta_{w^j}$ and is invariant on the other
nodes. Also $\pi$ respects the capacities in $H^1$, and the function $\tilde g$
induced by $g$ under $\pi$ (i.e. $\tilde g(e) := g(\pi(e))$) is again a maximum
IBD-flow in $H^1$. Since $\calB$ is canonical, it follows
from~\refth{canon_bd_barrier} that
    \begin{equation}
    \label{eq:r_preserved}
\vec R_{H^1,g} = \vec R_{H^1,\tilde g}, \quad \bvec R_{H^1,g} = \bvec
R_{H^1,\tilde g}, \quad
 \vec R_{H^1,g} \cup \bvec R_{H^1,g} = A \cup B_1 \cup \ldots \cup B_k.
    \end{equation}
The nodes in $\bvec R_{H^1,g} - \vec R_{H^1,g}$ are \emph{flipped} when
constructing~$H^2$ from $H^1$. Then \refeq{r_preserved} and
\refth{bd_barrier_from_flow} imply that for $i,j$ as above,
    \begin{numitem}
    \label{eq:theta_preserved}
        \begin{itemize}
            \item[(a)] $\theta_{w^i}$ is flipped if and only if $\theta_{w^j}$
is flipped;
            \item[(b)] $\theta_{w^i}\in A$ if and only if $\theta_{w^j} \in A$.
            \item[(c)] $\theta_{w^i}\in B_1 \cup \ldots \cup B_k$ if and only
if $\theta_{w^j} \in B_1 \cup \ldots \cup B_k$.
        \end{itemize}
    \end{numitem}

Let $p \in\{1, \ldots, k\}$. Since the capacity $c[\vec A, B_p]$ (in $H^2$) is
odd, the set $[\vec A, B_p]$ contains an edge~$e$ with $c(e)$ odd. Any edge in
$H^2$ having an odd capacity is either a loop $e_{w^i}$ or a leg $e_{w^i,s}$
(regarding ``infinite'' capacities as even ones).

Obviously, no loop can be ``responsible'' for the oddness of $c[\vec A, B_p]$.

So $e = e_{w^i,s} = \theta_{w^i}\theta_{w,s}$ for some $w\in V^\natural$, $1
\le i \le c(w)$ and $s\in T$. Let $\hat e$ denote the edge of $H^1$
corresponding to $e$. Then (see~\reffig{aux_graph}(b)) $\hat e$ leaves $\theta_{w,s}$ and
enters $\theta_{w^i}$. Due to flips, however, this may not be the case for $e$
in $H^2$.

Suppose $\theta_{w^i} \in A$ (and $\theta_{w,s}\in B_p$). Then $e$ leaves
$\theta_{w^i}$, whence $\theta_{w^i}$ is a flipped node in $A$. Now
\refeq{theta_preserved}(a,b) imply that all $\theta_{w^j}$ are flipped nodes
belonging to $A$ and that $e_{w^j,s} \in [\vec A, B_p]$ for all $j = 1, \ldots,
c(w)$. But then $e$ cannot be ``responsible'' for the oddness of $c[\vec A,
B_p]$ since $c(w)$ is even.

So we have $\theta_{w,s} \in A$ and $\theta_{w^i} \in B_p$. Then $e$ leaves
$\theta_{w,s}$. The edge $\hat e$ leaves $\theta_{w,s}$ as well. Hence
$\theta_{w,s}$ is not flipped. Since $c(w)$ is even, there must be $j\in\{1,
\ldots, c(w)\}$ such that the leg $e_{w^j,s} = \theta_{w,s}\theta_{w^j}$ is not
in $[\vec A, B_p]$ (for otherwise one may pick another pair $w',i'$). Then
$\theta_{w^j}$ is not in $B_p$. In view of~\refeq{theta_preserved}(c),
$\theta_{w^j}$ belongs to a $B$-set in $\calB$ different from $B_p$.
Considering the automorphisms $\pi = \pi_{w,i',j'}$ for all distinct $i', j' =
1, \ldots, c(w)$ and using the fact that the canonical barrier $\calB$
preserves under $\pi$ (in view of~\refeq{r_preserved}), we can conclude that
the nodes $\theta_{w^1}, \ldots, \theta_{w^{c(w)}}$ belong to different
$B$-sets in $\calB$. Since these $B$-sets are pairwise disjoint and each
automorphism $\pi$ swaps two copies of $\theta_{w},$ and do not move the
remaining nodes in $H^2$, each of these $B$-sets can contain only a single
node. Thus, $B_p = \set{\theta_{w^i}}$, yielding~(i) in the lemma.
\medskip

Next we show (ii). From the construction of $H^2$ it follows that
    \begin{equation}
    \label{eq:H2}
A=\vec R_{H^2,g}-\bvec R_{H^2,\tilde g} \quad \mbox{and} \quad
 B_1 \cup \ldots \cup B_k= \vec R_{H^2,g} \cap \bvec R_{H^2,g}.
    \end{equation}
By the first equality, any $(H^2,g)$-residual walk ending at a node $v\in A$
enters $v$, and by the second equality, there exist an $(H^2,g)$-residual walk
$P$ to $\theta:=\theta_{w^i}$ that enters $\theta$ and an $(H^2,g)$-residual
walk $Q$ to $\theta$ that leaves $\theta$. Recall that the residual walks leave
the source $z$. Let $a=u\vec{\theta}$ be the last edge of $P$, and
$b=v\bvec{\theta}$ the last edge of $Q$. Define $E'$ ($E''$) to be the set of
legs $e=e_{w^i,s}$ with $g(e)=0$ (resp. $g(e)=1$). Note that
(cf.~\refeq{residual_edges})
  \begin{numitem}
if $e\in E'$ then $e^R\notin EH^2_g$ and $e$ enters $\theta$ in $H^2_g$, and
if $e\in E''$ then $e\notin EH^2_g$ and $e^R$ leaves $\theta$ in $H^2_g$.
   \label{eq:EpEpp}
   \end{numitem}

Supposing the existence of the root edge $r=\vec z\bvec{\theta}$ (in $H^1$ and
$H^2$), we can come to a contradiction as follows. Since there is no loop at
$\theta$, both nodes $u,v$ are in $A$. Note that the edge $a$ is different from
$r$ (which leaves $\theta$) and from $r^R$ (which enters $z$).
Then~\refeq{EpEpp} implies that $a\in E'$. Furthermore, $a$ is of the form
$\vec{u}\vec{\theta}$. For if $a$ enters $u$ then the edge of $P$ preceding $a$
leaves $u$, whence the part of $P$ from $z$ to $u$ forms an $(H^2,g)$-residual
walk leaving $u$, which is impossible since $u\in A$.

So $a\in[\vec A,B_p]$ and $g(a)=0=c(a)-1$. Then $g[\bvec A,B_p]=0$, by
\refth{max_ibd_flow_min_odd_barrier}(iii). This implies that $E''\subseteq[\vec
A,B]$. But then the last edge $b=v\bvec\theta$ of the walk $Q$ as above cannot
be reverse to any edge in $E''$; for otherwise $b$ enters $v$, implying that
the part of $Q$ from $z$ to $v$ leaves $v$. Also $b$ is neither reverse to an
edge in $E'$ (cf.~\refeq{EpEpp}), nor equal to $r$. The latter is because
$r\in[\vec A,B_p]$, and therefore, $g(a)<c(a)$ implies $g(r)=c(r)$ (cf.
\refth{max_ibd_flow_min_odd_barrier}(iii)), whence $r\notin EH^2_g$. Thus, $Q$
does not exist. This contradiction yields~(ii).
\medskip

It remains to show (iii). By~(ii), we have $e_{w^i}\in EH^2$, and $[A,B_p]$ is
exactly the set of legs at $\theta:=\theta_{w^i}$. Suppose $d:=|[\vec
A,B_p]|\ne 1$. Then $d\ge 3$, since $c[\vec A,B_p]=d$ is odd. Hence $g[\vec
A,B_p]\ge d-1\ge 2$ (by \refth{max_ibd_flow_min_odd_barrier}(iii)). Also the
fact that all legs enter $\theta$ together with $\div_g(\theta)=0$ and
$g(e_{w^i})\le 1$ implies that the only possible case is when $g(e_{w^i})=1$,
$g[\vec A,B_p]=2$ and $g[\bvec A,B_p]=0$. Now take an $(H^2,g)$-residual walk
$Q$ to $\theta$ that leaves $\theta$, and let $b$ be its last edge. Then $b$ is
neither the loop $e_{w^i}$ (which is saturated), nor reverse to a leg
$e=v\vec\theta$ with $g(e)>0$. Indeed, if $b=e^R$ then $b$ enters $v$ (in view
of $v\in A$ and $e\in[\vec A,B_p]$), and hence the part of $Q$ from $z$ to $v$
leaves $v$, which is impossible since $v\in A$. This contradiction yields~(iii)
and completes the proof of the lemma.
  \end{proof}

\medskip

Based on Lemma~\ref{lm:b_sets}, we now finish the proof of
\refprop{aux_bidir_flow}. Consider $B_p = \set{\theta_{w^i}}$ and let
$e_{w^i,s}$ be the unique edge in $[\vec A, B_p]$. Then $[\bvec A, B_p] =
\setst{e_{w^i,t}}{t\in T-\{s\}}$. Consider the maximum fractional BD-flow $f$
as before. By the goodness of $f$ (see \refeq{good}), we have
  \begin{multline*}
    f[\vec A, B_p] - f[\bvec A, B_p] = f(e_{w^i,s}) -
                     \sum\nolimits_{t\in T-\{s\}} f(e_{w^i,t}) \\
    = f(e_{w^i,s}) - \left( 2f(e_{w^i}) - f(e_{w^i,s}) \right) =
  2 \left( f(e_{w^i,s}) - f(e_{w^i}) \right)  \le 0 = c[\vec A, B_p] - 1.
  \end{multline*}

Using this and \refeq{val_g}, we have
  \begin{multline*}
\val{f} = \div_f(z) = \sum\nolimits_{v \in A} \div_f(v) =
    \left( 2f[\vec A, \bvec A] - 2f[\bvec A, \vec A] \right) +
                \left( f[\vec A] - f[\bvec A] \right) \\
= \left( 2f[\vec A, \bvec A] - 2f[\bvec A, \vec A] \right) +
          \left( f[\vec A,M] - f[\bvec A, M] \right) +
         \sum\nolimits_{p=1}^k \left( f[\vec A, B_p] - f[\bvec A, B_p] \right) \\
  \le 2c[\vec A, \bvec A] + c[\vec A, M] + \sum\nolimits_{p=1}^k \left( c[\vec A,B_p] - 1 \right)
         = c[\vec A] + 2c[\vec A, \bvec A] - k = c(\calB) = \val{g}.
  \end{multline*}
Thus, we obtain the desired relation $\val{f} \le \val{g}$ (which, in fact,
holds with equality). This completes the proof of \refprop{aux_bidir_flow}.

\section{Dual half-integrality}

\subsection{Polyhedral approach}

\begin{theorem}
 \label{th:dual_halfint}
Let $a:VG\to\Z_+$ and $p \in \Z_+$. Then problem (\dualpprob) has a
half-integer optimal solution.
 \end{theorem}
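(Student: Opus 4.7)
The plan is to combine Theorem~\ref{th:primal_halfint} with LP duality. Doubling the integer capacities $c$ and applying Theorem~\ref{th:primal_halfint} produces an integer optimal primal multiflow $F^*$ for (\ncpprob); LP duality then forces the minimum of $c \cdot l$ over (\dualpprob) to be a half-integer. Hence it suffices to exhibit a feasible $l^*$ that is half-integer valued and achieves this value.

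My next step would be to work with the equivalent compact formulation \refeq{compact_dualpprob}, whose variables are $l(v) \in \Q_+$ and potentials $\phi_s(v) \in \Q$. Its constraint matrix has a convenient structure: each $l(v)$ appears with coefficient $\alpha_v \in \set{0, 1/2, 1}$ coming from the operator $\bar{\cdot}$ in \refeq{wbar}, while each $\phi_s(v)$ appears with coefficient in $\set{0, +1, -1}$. After the substitution $l' := 2l$ the system becomes integer-valued, and I would argue that an optimal vertex of the resulting LP is integer in $l'$, hence half-integer in $l$. The rough intuition is that once $l$ is fixed, the constraints on each $\phi_s$ form a network-matrix system of shortest-path differences, which is totally unimodular.

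The main obstacle is that $l$ and $\phi$ are coupled: the right-hand sides of the shortest-path constraints depend on $l$ through $\bar l$, so total unimodularity of the full system is not immediate. To handle this I would invoke complementary slackness with the integer primal $F^*$: by \refeq{cs_saturated}, $l^*(v)$ can be positive only on nodes saturated by $F^*$, and by \refeq{cs_geodesic} the $T$-paths with $F^*(P) > 0$ are $\ell$-geodesics whose structure is governed by Lemma~\ref{lm:geodesics}. Restricted to this support, the dual LP decomposes into ``local'' pieces around each zone $V^s$ and each central node in $V^\natural$, on which a direct vertex argument (or, alternatively, the bidirected-flow LP duality from Section~\ref{ssec:IBD-flows}, translated back through the correspondence between $H$ and $G$ from Section~\ref{ssec:aux_graph}) produces the desired half-integer $l^*$. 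The appearance of half-units rather than outright integrality reflects precisely the coefficient $\alpha_v = 1/2$ at the inner nodes of $G$.
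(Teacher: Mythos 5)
Your plan takes a genuinely different route from the paper's proof, and in its present form it has a gap.

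The paper proves \refth{dual_halfint} very briefly, by invoking a general polyhedral fact (\reflm{dual-primal}, cited from~\cite{kar-89}): if the program $D(c)$ has a $1/k$-integer optimum for \emph{every} nonnegative integral right-hand side vector $c$, then so does $P(c)$. Here $D(c)$ is (\ncpprob) and $P(c)$ is (\dualpprob), and the premise is exactly \refth{primal_halfint} applied \emph{uniformly over all integer capacity functions}, not just the single doubled $c$ that you work with. No complementary slackness, no geodesic structure, and no examination of a particular primal $F^*$ are needed; the entire content is a ``totally dual $1/k$-integrality implies totally primal $1/k$-integrality'' argument in the Edmonds--Giles spirit.

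Your argument instead tries to construct, for one fixed $c$, a half-integer optimal dual vertex directly, using a half-integer primal $F^*$ and complementary slackness. That is much closer to the paper's \emph{algorithmic} construction in Section~6.2 than to its proof of \refth{dual_halfint}. The difficulty is that you stop at precisely the point where the real work begins. You acknowledge the coupling between $l$ and $\phi$ and then assert that, restricted to the complementary-slackness support, the dual LP ``decomposes into local pieces'' on which ``a direct vertex argument or the bidirected-flow LP duality'' yields a half-integer $l^*$. No such decomposition is exhibited and no vertex argument is carried out. This is where the paper introduces the auxiliary system \refeq{rho_system}, whose constraints each involve at most two variables with coefficients $\pm 1$ and integer right-hand sides, and then appeals to the standard fact that such two-variable systems are totally dual half-integral. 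Without a concrete replacement for that system, the claim that the restricted LP is ``totally unimodular / half-integral'' is unsupported. There is also a circularity in your phrasing: the zones $V^s$ and central nodes $V^\natural$ are defined via $\ell := \bar a + \bar l$, which presupposes an optimal $l$ chosen in advance; the paper avoids this by starting from an arbitrary rational optimum $l$ and then constructing a new $\hat l$ satisfying \refeq{hat_l}, rather than proving half-integrality of a vertex of the original LP.

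Your opening observation (doubling $c$ and using LP duality shows the optimal \emph{value} of (\dualpprob) is a half-integer) is correct but only necessary, not sufficient, for the existence of a half-integer optimal \emph{solution}; you recognize this, but the step that would close the gap is the one you leave as a sketch.
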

 \begin{proof}
The proof follows easily from \refth{primal_halfint} and the general fact that
the ``totally dual $1/k$-integrality'' implies the ``totally primal
$1/k$-integrality'', which is a natural generalization of a well-known result
on TDI-systems due to Edmonds and Giles~\cite{EG-77}. More precisely, we
utilize the following simple fact (see, e.g., \cite[Statement 1.1]{kar-89}):
    \begin{lemma} \label{lm:dual-primal}
Let $A$ be a nonnegative $m \times n$-matrix, $b$ an integral $m$-vector, and
$k$ a positive integer. Suppose that the program $D(c) := \max \setst{yb}{y \in
\Q^m_+,~ yA \le c}$ has a $1/k$-integer optimal solution for every nonnegative
integral $n$-vector $c$ such that $D(c)$ has an optimal solution. Then for
every nonnegative integral $n$-vector $c$, the program $P(c) := \min
\setst{cx}{x \in \Q^n_+,~ Ax \ge b}$ has a $1/k$-integer optimal solution
whenever it has an optimal solution.
    \end{lemma}
In our case, we set $k := 2$ and take as $A$ (resp. $b$) the constraint matrix
(resp. the right hand side vector) of~(\dualpprob). Then $b$ is integral,
$D(c)$ becomes~(\ncpprob), $P(c)$ becomes~(\dualpprob), and the
half-integrality for the former implies that for the latter.
  \end{proof}

This proof is not ``constructive'' and does not lead directly to an efficient
method for finding a half-integer optimal solution $l$ to~(\dualpprob). Below
we devise a strongly polynomial algorithm.

\subsection{The algorithm}

It has as the input arbitrary (rational-valued) optimal solutions $l$ and $F$
to~(\dualpprob) and~(\ncpprob), respectively, and outputs a half-integer
optimal solution~$\hat l$ to~(\dualpprob). (Such $l$ and $F$ can be found in
strongly polynomial time as described in \refsec{primal_alg}.)

As before, we set $\ell:=\bar a+\bar l$, and in what follows, speaking of a
geodesic, we always mean an $\ell$-geodesic in $G$, i.e. a $T$-path $P$ with
$\ell(P) = a(P) + l(P) = \lambda$. Our goal is to construct
$\hat l \colon VG \to \frac12 \Z_+$ satisfying the following conditions:
  \begin{numitem}
 \label{eq:hat_l}
    \begin{itemize}
\item[\rm (i)] $a(P) + \hat l(P) \ge \lambda$ for any $T$-path~$P$ in $G$;

\item[\rm (ii)] $a(P) +\hat l(P) = \lambda$ for each geodesic~$P$;

\item[\rm (iii)] for $v \in VG$, if $l(v) = 0$ then $\hat l(v) = 0$.
    \end{itemize}
  \end{numitem}
Then~\refeq{hat_l} and the complementary slackness conditions
\refeq{cs_geodesic}--\refeq{cs_saturated} imply that the node length $\hat l$
forms an optimal solution to (\dualpprob).

\medskip

We construct an undirected graph $\Gamma$ and endow it with \emph{edge} lengths
$\mu \colon E\Gamma \to \Z_+$ as follows. We first include in $\Gamma$ the
terminal set $T$ and all nodes and edges of $G$ contained in geodesics. Also we
add to $\Gamma$ the edges of $G$ with both ends lying on geodesics or $T$. The
edges $e$ of the current $\Gamma$ are called \emph{regular} and we define
$\mu(e):= 0$.

Next we add to $\Gamma$ additional edges, which are related to constraints due
to parts of $G$ outside $\Gamma$. More precisely, we scan all pairs of nodes
$u, v \in V\Gamma$ not connected by a (regular) edge and such that there exists
a path~$Q$ in $G$ having all nodes in $VG - V\Gamma$ and whose first node is
adjacent to $u$, and the last node to $v$. We add to $\Gamma$ edge $e = uv$,
referring to it as a \emph{virtual} edge, and define its length $\mu(e)$ to be
the minimum value of $a(Q)$ among such paths $Q$. The construction of
$\Gamma,\mu$ reduces to a polynomial number of usual shortest paths problems in
$G$.

Note that $l(v) = 0$ holds for each node $v \in VG - V\Gamma$ (by
\refeq{cs_geodesic} and \refeq{cs_saturated}). We assign $\hat l(v) := 0$ for
these nodes $v$ and will further focus on finding values of $\hat l$ on the
nodes in $\Gamma$.

For a path $P$ in $\Gamma$, let $\Lambda(P)$ denote its \emph{full length}
$a(P) + l(P) + \mu(P)$. Clearly $\Lambda(P) \ge \lambda$ holds for any
$T$-path~$P$ in~$\Gamma$, and for each $T$-path~$Q$ in~$G$, there exists a
\emph{shortcut} path~$P$ in $\Gamma$ such that $\Lambda(P) \le a(Q) + l(Q)$.

The desired lengths $\hat l$ on $V\Gamma$ will be extracted from a system of
linear constraints described below. For a node $v \in V\Gamma$, let $T_v$
($\Pi_v$) denote the set of terminals $s \in T$ (resp. pairs $s,t\in T$) such
that $v$ belongs to a geodesic from $s$ (resp. connecting $s$ and $t$). When a
terminal $s$ belongs to no geodesic, we set by definition $T_s := \set{s}$. For each
$v \in V\Gamma$ and $s \in T_v$, we introduce two variables $\rho_s^-(v)$ and
$\rho_s^+(v)$ and impose the following constraints:
 \begin{numitem}
\label{eq:rho_system}
    \begin{itemize}
\item[\rm (i)] For each $s \in T$, ~$\rho_s^-(s)  = 0$.
\item[\rm (ii)] For each $v \in V\Gamma$ and $s \in T_v$,
        \begin{eqnarray*}
            \rho_s^+(v) - \rho_s^-(v) & = a(v) & \quad \mbox{if ~$l(v) = 0$}, \\
                                               & \ge a(v) & \quad \mbox{if ~$l(v) > 0$}.
        \end{eqnarray*}
\item[\rm (iii)] For each $v \in V\Gamma$ and $\{s, t\} \in \Pi_v$,
            ~$\rho_s^+(v) + \rho_t^-(v) = \lambda$ (and $\rho_t^+(v) + \rho_s^-(v)=\lambda$).
\item[\rm (iv)] If $e = uv \in E\Gamma$ and $s \in T_u \cap T_v$, then
        \begin{eqnarray*}
            \rho_s^-(v) - \rho_s^+(u) & \le \mu(e), \\
            \rho_s^-(u) - \rho_s^+(v) & \le \mu(e).
        \end{eqnarray*}
Moreover, if there exists a geodesic from $s$ containing both $u,v$ in this
order (resp. in the order $v,u$), then the former (resp. the latter) inequality
is replaced by equality. (Note that in this case $\mu(e)=0$.)
\item[\rm (v)] If $e = uv \in E\Gamma$, $s \in T_u$, $t \in T_v$, and $s \ne t$,
then $\rho_s^+(u) + \rho_t^+(v)\ge \lambda-\mu(e)$.
    \end{itemize}
\end{numitem}

The meaning of these variables becomes evident from the proof of the next
statement.
 \begin{lemma}
    System~\refeq{rho_system} has a solution.
\end{lemma}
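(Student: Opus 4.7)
The plan is to construct an explicit feasible solution via shortest-path distances in $\Gamma$ with respect to the full length $\Lambda$. For each $v \in V\Gamma$ and each $s \in T_v$, set $\rho_s^+(v) := \min\{\Lambda(R) : R$ is an $s$-$v$ path in $\Gamma\}$, with the convention that the trivial path $R=(s)$ is allowed when $v = s$ (so $\rho_s^+(s) = a(s)+l(s)$), and set $\rho_s^-(v) := \rho_s^+(v) - (a(v) + l(v))$. The assumption $s \in T_v$ guarantees that $s$ and $v$ lie on a common geodesic, whose $s$-$v$ subpath stays in $V\Gamma$; hence the minimum is attained.

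The crucial technical step is a length identity between $\Gamma$ and $G$. Every $s$-$v$ path $R$ in $\Gamma$ projects, by replacing each virtual edge with a cheapest underlying detour through $VG - V\Gamma$, to an $s$-$v$ walk $R^*$ in $G$; conversely, every $s$-$v$ walk in $G$ contracts to a walk in $\Gamma$ by collapsing maximal excursions into $VG - V\Gamma$ to the appropriate virtual edges. A direct accounting of the $\alpha$-factors in~\refeq{wbar} shows that these operations preserve lengths up to a single boundary correction: $\ell(R^*) = \Lambda(R) - (1-\alpha_v)(a(v) + l(v))$, with no correction at $s$ since $\alpha_s = 1$. Consequently, for $v \ne s$ one obtains $\rho_s^+(v) = \dist_\ell(s,v) + (1-\alpha_v)(a(v)+l(v))$, and the minimizer in $\Gamma$ can be chosen to be the image of any shortest $s$-$v$ path in $G$.

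With this identity, the conditions~\refeq{rho_system} follow. (i) is the trivial-path value. (ii) $\rho_s^+(v) - \rho_s^-(v) = a(v)+l(v)$, which equals $a(v)$ iff $l(v)=0$. For (iii), fix $\{s,t\} \in \Pi_v$ and a geodesic $P$ from $s$ to $t$ through $v$; its subpaths $P_{sv}$ and $P_{vt}$ are shortest in $G$ (else $P$ could be shortened, contradicting $\ell(P)=\lambda$), so the identity gives $\rho_s^+(v)+\rho_t^+(v) = \lambda+(a(v)+l(v))$, whence $\rho_s^+(v)+\rho_t^-(v) = \lambda$. For (iv), the triangle inequality in $\Gamma$ applied to $P_{su}\cdot(u,e,v)$ yields $\rho_s^-(v)-\rho_s^+(u) \le \mu(e)$; for the ``moreover'' equality, the hypothesis $\mu(e)=0$ together with the positivity $a>0$ forces $u$ and $v$ to be adjacent on a geodesic from $s$ (otherwise the edge $uv$ would give a strictly shorter $s$-$v$ path than the geodesic subpath, contradicting that the latter is a shortest $s$-$v$ path in $G$), and then the shortest $s$-$v$ path in $\Gamma$ uses exactly this edge, giving equality. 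For (v), the concatenation $P_{su}\cdot(u,e,v)\cdot P_{vt}$ is an $s$-$t$ walk in $\Gamma$ whose projection shortens to a $T$-path in $G$; the dual feasibility $\dist_\ell(s,t)\ge\lambda$ then gives $\rho_s^+(u)+\mu(e)+\rho_t^+(v)\ge\lambda$.

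The main obstacle is establishing the length identity, which requires careful bookkeeping of how the $\alpha$-factors in~\refeq{wbar} distribute each node's weight between its incident edges, and verifying that the virtual-edge construction faithfully captures the cost of detours through $VG - V\Gamma$. Once this identity is in place, the remaining verifications reduce to routine applications of the triangle inequality and of the fact that geodesic subpaths are themselves shortest paths in $G$.
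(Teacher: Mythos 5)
Your construction is the same as the paper's: you take $\rho_s^+(v)$ to be the minimum full length $\Lambda(R)$ over $s$--$v$ paths $R$ in $\Gamma$, and $\rho_s^-(v) = \rho_s^+(v) - (a(v)+l(v))$, which is exactly the paper's ``minimum pre-length'' of an $s$--$v$ path. The verifications of (6.3)(i)--(v) then go through just as the paper indicates, via triangle inequalities in $(\Gamma,\Lambda)$ and the pre-established facts that $\Lambda(P)\ge\lambda$ for every $T$-path $P$ in $\Gamma$ and that geodesic subpaths realize the minima; your detour-through-$G$ bookkeeping via $\dist_\ell$ and the $\alpha$-factors is a more explicit rendering of what the paper compresses into ``follow from the construction,'' but it is not a different route.
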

 \begin{proof}
For a $p$--$q$ path $P$ in $\Gamma$, define its \emph{pre-length} to be
$\Lambda(P) - (a(q) + l(q))$ (i.e. compared with the full length, we do not
count the last node). For $v \in V\Gamma$ and $s \in T_v$, define $\rho_s^-(v)$
(resp. $\rho_s^+(v)$) to be the minimum pre-length (resp. the minimum full
length) of an $s$--$v$ path in~$\Gamma$. Then~\refeq{rho_system}(i)--(iii)
follow from the construction. Condition~\refeq{rho_system}(iv) represents a
sort of triangle inequalities (giving one equality if $e$ belongs to a geodesic
from $s$). Finally, condition~\refeq{rho_system}(v) holds since the full length
of any $s$--$t$ path in $\Gamma$ is at least $\lambda$.
 \end{proof}

We observe that in linear system \refeq{rho_system}, each constraint contains
at most two variables, each occurring with the coefficient 1 or --1, and that
the R.H.S. in it is an integer. A well-known fact is that a linear system with
such features is totally dual half-integral; therefore, it has a half-integer
basis solution (whenever it has a solution at all), and such a solution can be
found in strongly polynomial time (cf., e.g.,~\cite{EJ-70,sch-03}).
\medskip

Given a half-integer solution $(\rho^-,\rho^+)$ to~\refeq{rho_system}, we
define half-integer node lengths $\hat l$ as follows:
  $$
    \hat l(v) := \rho_s^+(v) - \rho_s^-(v) - a(v) \quad
    \mbox{for all $v \in V\Gamma$ and $s \in T_v$}.
  $$

Now the desired algorithmic result is provided by the following

 \begin{lemma}
$\hat l$ is well-defined and satisfies~\refeq{hat_l}.
 \end{lemma}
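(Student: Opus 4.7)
Well-definedness of $\hat l$ requires showing $\rho_s^+(v) - \rho_s^-(v)$ is independent of $s \in T_v$. Whenever $(s,s') \in \Pi_v$, subtracting the two equalities in \refeq{rho_system}(iii) immediately gives $\rho_s^+(v) - \rho_s^-(v) = \rho_{s'}^+(v) - \rho_{s'}^-(v)$, so I need only verify that any two elements of $T_v$ form a pair in $\Pi_v$. For central $v \in V^\natural$, two half-geodesics through $v$ ending at $s$ and $s'$ have interiors in the disjoint zones $V^s, V^{s'}$ (by \reflm{geodesics}), so their concatenation is an $(s,s')$-geodesic through $v$; for non-central $v \in V^{s_0}$, every geodesic through $v$ has $s_0$ as an endpoint, so any $s \in T_v - \{s_0\}$ pairs with $s_0$ and the identity factors through $s_0$. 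Condition \refeq{hat_l}(iii) then follows immediately from \refeq{rho_system}(ii) for $v \in V\Gamma$ and by definition (using $l=0$ outside $V\Gamma$ via \refeq{cs_geodesic}--\refeq{cs_saturated}) otherwise. Condition \refeq{hat_l}(ii) is a telescoping: along a geodesic $P = (s = v_0, \ldots, v_k = t)$ every $v_i$ lies in $V\Gamma$, every edge is regular with $\mu = 0$, and \refeq{rho_system}(iv) holds with equality for subscript $s$, giving $\rho_s^-(v_{i+1}) = \rho_s^+(v_i)$; the sum $\sum_i (\rho_s^+(v_i) - \rho_s^-(v_i))$ then telescopes to $\rho_s^+(t) - \rho_s^-(s) = \lambda$, by \refeq{rho_system}(i) at $s$ and \refeq{rho_system}(iii) at $t$ for the pair $(s,t) \in \Pi_t$.

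For condition \refeq{hat_l}(i), given an arbitrary $T$-path $P$ from $s$ to $t$ in $G$, the plan is to form the shortcut $P' = (u_0 = s, e_0, \ldots, e_{m-1}, u_m = t)$ in $\Gamma$ by contracting each maximal subpath of $P$ in $VG - V\Gamma$ into a virtual edge; since $\hat l$ vanishes on such subpaths and their cumulative $a$-cost is at least $\mu$ of the corresponding virtual edge, one obtains $a(P) + \hat l(P) \ge \sum_j (a(u_j) + \hat l(u_j)) + \sum_j \mu(e_j)$. It then remains to show the right-hand side is $\ge \lambda$. Via LP duality for the shortest-path problem on the ``split'' graph (each $u_j$ replaced by two nodes $u_j^-, u_j^+$ joined by an edge of length $a(u_j) + \hat l(u_j)$, and each $e_j$ becoming a $u_j^+$-$u_{j+1}^-$ edge of length $\mu(e_j)$), this reduces to constructing potentials $\phi$ with $\phi(u_0^-) = 0$, $\phi(u_m^+) \ge \lambda$, and the expected triangle inequalities $\phi(u_j^+) - \phi(u_j^-) \le a(u_j) + \hat l(u_j)$ and $\phi(u_{j+1}^-) - \phi(u_j^+) \le \mu(e_j)$.

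At each $u_j$ the potential $\phi$ will be set in either a \emph{normal} mode $\phi(u_j^\pm) = \rho_\sigma^\pm(u_j)$ or a \emph{reversed} mode $\phi(u_j^\pm) = \lambda - \rho_\sigma^\mp(u_j)$ for some $\sigma \in T_{u_j}$; both yield $\phi(u_j^+) - \phi(u_j^-) = a(u_j) + \hat l(u_j)$ by the well-definedness above. Taking $u_0$ in normal mode with $\sigma = s$ and $u_m$ in reversed mode with $\sigma = t$ forces the boundary values $\phi(u_0^-) = 0$ and $\phi(u_m^+) = \lambda$ via \refeq{rho_system}(i). Within a segment of constant mode and subscript the edge inequalities are exactly \refeq{rho_system}(iv), and a switch from normal-$s$ to reversed-$t$ with $s \ne t$ at some edge $e_{j^*}$ is accommodated by \refeq{rho_system}(v). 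The hard part will be choosing $j^*$ so that $s \in T_{u_j}$ for all $j \le j^*$ and $t \in T_{u_j}$ for all $j \ge j^* + 1$; I expect this to rely on a structural analysis of how $P'$ traverses the zones $V^s, V^t, V^\natural$ and the virtual edges, using \reflm{geodesics} to track how $T_{u_j}$ evolves along the path, and possibly allowing multiple intermediate switches (or additional reversed-subscript transitions) when a single switch does not suffice.
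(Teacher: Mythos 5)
Your arguments for well-definedness, for \refeq{hat_l}(ii) (the telescoping along a geodesic), and for \refeq{hat_l}(iii) are correct, and the well-definedness argument is in fact a slightly more explicit variant of the paper's (you check directly that $(T_v,\Pi_v)$ is complete for central $v$ and a star for non-central $v$, whereas the paper only argues connectivity).

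The gap is in \refeq{hat_l}(i). You reduce to showing $\Delta(Q)=a(Q)+\hat l(Q)+\mu(Q)\ge\lambda$ for $T$-paths $Q$ in $\Gamma$ and set up a potential-function / LP-duality scheme with ``normal'' and ``reversed'' modes, but you explicitly leave the crux open: choosing a switch point $j^*$ so that the subscript $s$ is valid up to $j^*$ and $t$ is valid beyond. As you yourself anticipate, a single switch will not suffice in general --- a path $Q$ in $\Gamma$ may wander far from any geodesic from $p$ or from $q$, and the terminals in $T_{u_j}$ may have nothing to do with the endpoints of $Q$ for intermediate $j$. ``Allowing multiple intermediate switches'' is not worked out, and it is not obvious how to glue reversed-mode segments with changing subscripts while keeping the boundary conditions $\phi(u_0^-)=0$, $\phi(u_m^+)\ge\lambda$. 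The paper avoids this difficulty by induction on the length of the ``non-geodesic tail'' $Q''$ in a decomposition $Q=Q'\cdot Q''$ with $Q'$ a geodesic prefix: when the first vertex $v$ of $Q''$ lies on some $s$--$t$ geodesic $L$, constraints \refeq{rho_system}(iii) and (v) give $\Delta(Q)\ge\Delta(R)$, where $R$ replaces the $p$--$v$ prefix of $Q$ by the $t$--$v$ half of $L$. The key idea you are missing is that the induction is allowed (and in fact forced) to \emph{change the starting terminal}: $R$ starts at $t$, which need not be $p$ or $q$, so the argument makes progress even when neither endpoint of $Q$ is a terminal associated with intermediate vertices. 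To complete your proof you would need either to recover this inductive reduction or to formalize the multi-switch potential argument, which amounts to the same induction in disguise.
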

 \begin{proof}
We first show that for any $v \in V\Gamma$ and $s, t \in T_v$,
    \begin{equation}
    \label{eq:same_hat_l}
        \rho_s^+(v) - \rho_s^-(v) = \rho_t^+(v) - \rho_t^-(v).
    \end{equation}
This is trivial when $\Pi_v=\emptyset$ (since in this case $v\in T$ and
$T_v=\{v\}$). Let $\Pi_v\ne\emptyset$. If $\{s,t\}\in\Pi_v$,
then~\refeq{same_hat_l} follows from~\refeq{rho_system}(iii).
Now~\refeq{same_hat_l} with any two $s,t\in T_v$ is implied by the fact that
the graph whose nodes and edges are the elements of $T_v$ and $\Pi_v$,
respectively, is connected (as it is easy to see that for
$\{s,t\},\{p,q\}\in\Pi_v$, at least one of $\{s,p\},\{s,q\}$ is in $\Pi_v$ as
well). So $\hat l$ is well-defined.

Property~\refeq{hat_l}(iii) is immediate from~\refeq{rho_system}(ii).

To see~\refeq{hat_l}(ii), consider an $s$--$t$ geodesic $P$. Going along $P$
step by step and applying~\refeq{rho_system}(ii),(iv), we observe that for each
node $v$ on $P$, the $s$--$v$ part $P'$ of $P$ satisfies $a(P') + \hat l(P') =
\rho_s^+(v)-\rho_s^-(s)$. When reaching $t$, we obtain $a(P) + \hat l(P) =
\rho_s^+(t)-\rho_s^-(s)$, and now~\refeq{hat_l}(ii) follows
from~\refeq{rho_system}(iii) and $\rho_s^-(s)=\rho_t^-(t)=0$
(by~\refeq{rho_system}(i)).

Finally, consider an arbitrary $T$-path $Q$ in $\Gamma$, from $p$ to $q$ say.
To conclude with~\refeq{hat_l}(i), it suffices to show that
  \begin{equation} \label{eq:hatLambda}
\Delta(Q):=a(Q)+\hat l(Q)+\mu(Q)\ge \lambda.
  \end{equation}

Represent $Q$ as the concatenation $Q'\cdot Q''$, where $Q'$ is a part of a
geodesic from $p$. We prove~\refeq{hatLambda} by induction on the number
$|Q''|$ of edges in $Q''$. When $|Q''|=0$, ~$Q$ is a geodesic, and we are done.
Assuming this is not the case, take the first edge $e=uv$ of $Q''$, where $u$
is the end of $Q'$. By reasonings above, $\Delta(Q')=\rho_p^+(u)$. If $v\in T$
(and therefore, $v=q$), ~\refeq{hatLambda} immediately follows
from~\refeq{rho_system}(v) (with $s:=q$ and $t:=q$). And if $v\notin T$, then
$v$ belongs to some $s$--$t$ geodesic $L$. W.l.o.g., one may assume that $s\ne
p$ and $t\ne q$. Applying~\refeq{rho_system}(v) to $s,p,e$, we have
  $$
  \rho^+_p(u)+\rho^+_s(v)+\mu(e)\ge \lambda.
  $$
Comparing this with $\rho^-_s(v)+\rho^+_t(v)=\lambda$ and using
$\rho^+_s(v)-\rho^-_s(v)=a(v)+\hat l(v)$, one can conclude that
$\Delta(Q)\ge\Delta(R)$, where $R$ is the $t$--$q$ path being the concatenation
of the $t$--$v$ part of (the reverse of) $L$ and the $v$--$q$ part $R''$ of
$Q$. Since $|R''|=|Q''|-1$, we can apply induction and obtain
$\Delta(Q)\ge\Delta(R)\ge\lambda$, as required.
  \end{proof}


\section*{Appendix: Skew-symmetric graphs and flows}

\refstepcounter{section}

In this section we recall the notions of skew-symmetric graphs and integer
skew-symmetric flows, review known results on such graphs and flows, and then
use them to derive necessary results on bidirected graphs and flows to which we
appealed in \refsec{prim_h_int}.

\subsection{Skew-symmetric graphs}
\label{ssec:sk_graph}

A \emph{skew-symmetric graph}, or an \emph{SK-graph} for short, is a digraph
$G=(VG,EG)$, with possible parallel arcs, endowed with two bijections
$\sigma_V, \sigma_A$ such that: $\sigma_V$ is an involution on the nodes (i.e.
$\sigma_V(v)\ne v$ and $\sigma_V(\sigma_V(v)) = v$ for each node~$v$);
$\sigma_A$ is an involution on the arcs; and for each arc $a$ from $u$ to $v$,
$\sigma_A(a)$ is an arc from $\sigma_V(v)$ to $\sigma_V(u)$. For relevant
results on SK-graphs and a relationship between SK- and BD-graphs, see
\cite{tut-67,GK-96,GK-04,BK-07}. For brevity $\sigma_V$ and $\sigma_A$ are
combined into one mapping $\sigma$ on $VG\cup AG$, which is called the
\emph{symmetry} (or skew-symmetry, to be precise) of $G$. For a node (arc) $x$,
its symmetric node (arc) $\sigma(x)$ is also called the \emph{mate} of $x$, and
we usually use notation with primes for mates, denoting $\sigma(x)$ by $x'$.
Although $G$ is allowed to contain parallel arcs, when it is not confusing, an
arc from $u$ to $v$ may be denoted as $(u,v)$ or $\vec{uv}$.

Observe that if $G$ contains an arc $a$ from a node $v$ to its mate $v'$, then
$a'$ is also an arc from $v$ to $v'$ (i.e. $a'$ is parallel to $a$).

The symmetry $\sigma$ is extended in a natural way to walks, subgraphs and
other objects in~$G$. In particular, two walks are symmetric to each other if
the elements of one of them are symmetric to those of the other and go in the
reverse order: for a walk $P = (v_0, a_1, v_1, \ldots, a_k, v_k)$, the
symmetric walk $P'=\sigma(P)$ is $(v'_k, a'_k, v'_{k-1}, \ldots, a'_1, v'_0)$.

Next we explain a relationship between skew-symmetric and bidirected graphs.
Given an SK-graph $G$, choose an arbitrary partition $\pi = (V_1, V_2)$ of $VG$
such that $V_2=\sigma(V_1)$. Then $G$ and $\pi$ determine the BD-graph $G^*$
with $VG^* = V_1$ whose edges correspond to the pairs of symmetric arcs in $G$.
More precisely, arc mates $a,a'$ of $G$ generate one edge $e$ of $G^*$
connecting nodes $u, v \in V_1$ such that: (i) $e$ goes from $u$ to $v$ if one
of $a, a'$ goes from $u$ to $v$ (and the other goes from $v'$ to $u'$ in
$V_2$); (ii) $e$~leaves both $u,v$ if one of $a,a'$ goes from $u$ to $v'$ (and
the other from $v$ to $u'$); (iii) $e$~enters both $u, v$ if one of $a,a'$ goes
from $u'$ to $v$ (and the other from $v'$ to $u$). Note that $e$ becomes a loop
if $a, a'$ connect a pair of symmetric nodes.

Conversely, a BD-graph $G^*$ determines an SK-graph $G$ with symmetry $\sigma$
as follows. Make a copy $\sigma(v)$ of each element $v$ of $V^* := VG^*$,
forming the set $(V^*)' := \setst{\sigma(v)}{v\in V^*}$. Put $VG:= V^* \sqcup
(V^*)'$. For each edge $e$ of $G^*$ connecting nodes $u$ and $v$, assign two
``symmetric'' arcs $a, a'$ in $G$ so as to satisfy (i)--(iii) above (where $u'
= \sigma(u)$ and $v' = \sigma(v)$). An example is depicted in~\reffig{bd-sk}.

\begin{figure}[tb]
    \centering
    \subfigure[BD-graph~$G^*$.]{
      \includegraphics{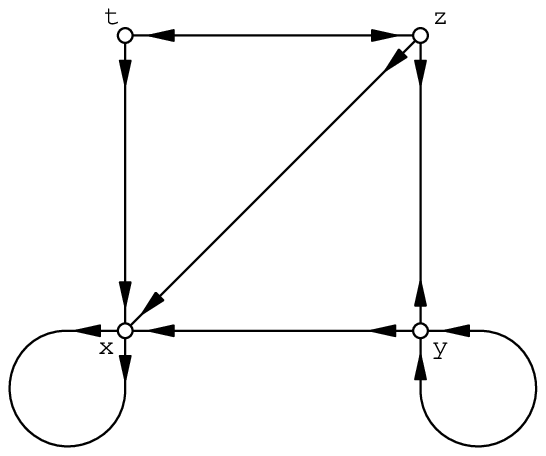}%
    }
    \hspace{1cm}%
    \subfigure[Corresponding SK-graph~$G$.]{
      \includegraphics{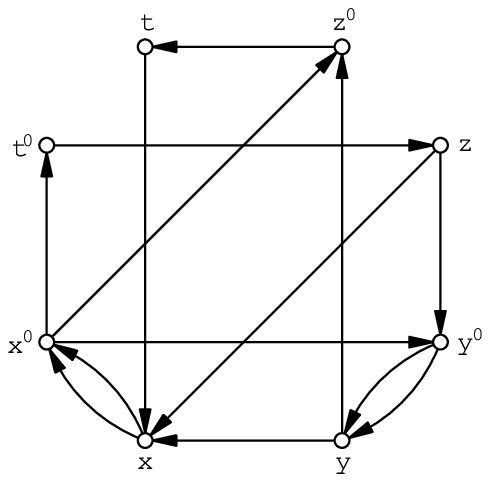}%
    }
    \caption{Related bidirected and skew-symmetric graphs.}
    \label{fig:bd-sk}
\end{figure}

 \begin{remark}
 \label{rem:bd_ss}
    Note that one BD-graph generates one SK-graph, by the second construction. On
    the other hand, one SK-graph generates a set of BD-graphs, depending on the
    partition $\pi$ of $V$, by the first construction. Namely, for each pair
    of symmetric mates $\set{v,v'}$ in $G$ one may distribute $v,v'$ between $V_1,V_2$
    so that either $v\in V_1$, $v'\in V_2$ or, reversely, $v \in V_2$, $v' \in V_1$.
The resulting BD-graphs are obtained from one other by making corresponding
flips (defined in \refssec{IBD-flows}).
 \end{remark}

There is essentially a one-to-one correspondence between the walks in $G^*$
and~$G$. More precisely, let $\tau$ be the natural mapping of $VG \cup AG$ to
$VG^* \cup EG^*$. Each walk $P = (v_0, a_1, v_1, \ldots, a_k, v_k)$ in
$G$ (where $a_i=(v_{i-1},v_i)$) induces the sequence
 $$
   \tau(P) := (\tau(v_0), \tau(a_1), \tau(v_1), \ldots,
 \tau(a_k), \tau(v_k))
$$
of nodes and edges in $G^*$. One can see that $\tau(P)$ is a walk in $G^*$
(i.e. $\tau(a_i),\tau(a_{i+1})$ form a transit pair at $\tau(v_i)$, for each
$i$) and that $\tau(P')$ is the walk reverse to $\tau(P)$. Moreover, for any
walk $P^*$ in $G^*$, there is exactly one walk $P$ in $G$ such that $\tau(P) =
P^*$ (considering $P$ up to replacing an arc $a\in AP$ by its mate $a'$ when
$a,a'$ are parallel, i.e. correspond to a loop in $G^*$).

\subsection{Skew-symmetric flows}
\label{ssec:sk_flow}

We call a function $\phi$ on the arcs of an SK-graph $G$
\emph{(self-)symmetric} if $\phi(a) = \phi(a')$ for all $a \in AG$. Let $s \in
VG$ be a designated \emph{source}; its mate $s'$ is regarded as the
\emph{sink}. An \emph{integer skew-symmetric $s$--$s'$ flow}, or an
\emph{ISK-flow} for short, is a symmetric function $f \colon AG \to \Z_+$ being
an $s$--$s'$ flow in a usual sense: $\div_f(v) = 0$ for all $v \in VG -
\set{s,s'}$, and $\div_f(s)\ge 0$. The \emph{value} of $f$ is $\val{f} :=
\div_f(s)$. Here $\div_f(v)$ denotes the usual divergence (given by
\refeq{div}, where $\deltain(v)$ and $\deltaout(v)$ are the sets of arcs
entering and leaving $v$, respectively).

For a capacity function $c \colon AG \to \Z_+$, a flow~$f$ is said to be
\emph{feasible} if $f(a) \le c(a)$ for all $a \in AG$. We refer to a feasible
ISK-flow of maximum possible value as a \emph{maximum ISK-flow}.

The above correspondence between BD- and SK-graphs is naturally extended to
flows. More precisely, if $f$ is a symmetric $s$--$s'$ flow in $G$, then
transferring the values of~$f$ from the pairs of arc mates of~$G$ to the edges
of the BD-graph~$G^*:=\tau(G)$, we obtain a $\tau(s)$-flow in~$G^*$, denoted as
$f^*$. The converse correspondence is evident as well.

For $X, Y \subseteq VG$, let $(X,Y)$ denote the set of arcs going from $X$ to
$Y$. Also (accommodating notation from \refsec{prim_h_int} to digraphs) we
denote by $[\vec X]$ the set of arcs leaving~$X$.

Let $c:AG\to\Z_+$ be a symmetric capacity function. Then a tuple $\calB = (A,
M; B_1, \ldots, B_k)$ of subsets of $VG$ is called a (skew-symmetric) \emph{odd
barrier} (w.r.t. the source~$s$) if the following conditions hold
(see~\reffig{odd_barrier}):
  \begin{numitem}
\label{eq:sk_odd_barrier}
    \begin{itemize} \compact
        \item[(i)]
        the sets $A, A'=\sigma(A), M, B_1, \ldots, B_k$ give a partition of $VG$,
        each $B_i$ is self-symmetric ($B_i' = B_i$),
        and $s \in A$;

        \item[(ii)]
        For each $i$, ~$c(A,B_i)$ is odd.

        \item[(iii)]
        For distinct $i,j$, ~$c(B_i,B_j) = 0$.

        \item[(iv)]
        For each $i$, ~$c(B_i,M) = c(M,B_i) = 0$.
    \end{itemize}
\end{numitem}
The \emph{capacity} of $\calB$ is defined to be
\begin{equation}
\label{eq:sk_barrier_cap}
    c(\calB) := c[\vec A] - k.
\end{equation}

\begin{figure}[tb]
    \centering
    \includegraphics{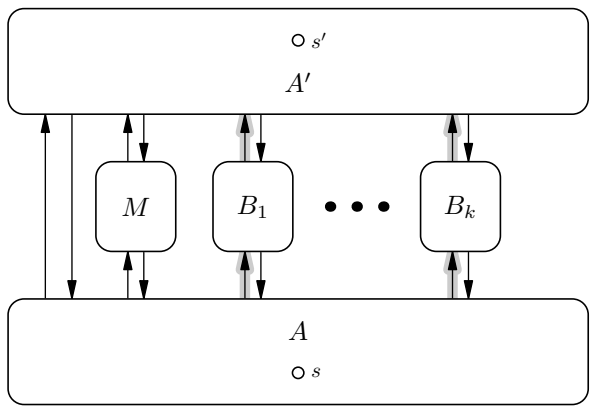}%
    \caption{
        A skew-symmetric odd-barrier.
        Grayed arcs correspond to odd capacity constraints.
    }
    \label{fig:odd_barrier}
\end{figure}

Odd barriers in skew-symmetric graphs are related to their bidirected
counterparts introduced in \refsec{prim_h_int}. Indeed, consider a BD-graph
$G^*$ with integer edge capacities $c \colon EG^* \to \Z_+$ and a source~$s$.
Construct the related SK-graph~$G$ with $VG=V\sqcup V'$, where $V:=VG^*$. Edge
capacities $c$ in $G^*$ induce symmetric arc capacities in~$G$, also denoted
by~$c$. The source $s$ in $G^*$ gives the source $s$ and the sink $s'$ in $G$.
Consider a skew-symmetric odd barrier $\calB = (A, M; B_1, \ldots, B_k)$ in
$G$.

This barrier gives rise to the following odd BD-barrier $\calB^*$ in $G^*$
obeying $c(\calB^*) = c(\calB)$. We first construct a new BD-graph from $G$ by
taking a bipartition $(V_1,V_2=\sigma(V_1))$ of $VG$ such that $A \subseteq
V_1$ and $V_1-(A\cup A')=V-(A\cup A')$; cf.~\refrem{bd_ss}. The resulting
BD-graph $H^*$ is equivalent to $G^*$. Moreover, $H^*$ is obtained from $G^*$
by flipping a subset of nodes within $A$.

The node subsets $M, B_1, \ldots, B_k$ in $G$ are self-symmetric and induce
subsets $M^*, B_1^*, \ldots, B_k^*$ in $G^*$ and $H^*$ in a natural way;
namely, $M^*:=M\cap V=M\cap V_1$ and similarly for $B_i^*$. Define $\calB^* :=
(H^* | A^*, M^*; B_1^*, \ldots, B_k^*)$, where $A^*:=(A\cup A')\cap V$. A
straightforward examination shows that the properties in~\refeq{sk_odd_barrier}
imply their bidirected counterparts in~\refeq{bd_odd_barrier}. To see that
$c(\calB^*) = c(\calB)$, define $Z := M \cup B_1 \cup \ldots \cup B_k$. Note
that $c[\vec A] = c(A, A') + c(A, Z)$. The capacity $c(A,A')$ is equal to
$2c[\vec {A^*},\bvec{A^*}]$ (in $H^*$) since $(A, A')$ consists of pairs of arc
mates, each pair corresponding to an edge in $[\vec {A^*},\bvec{A^*}]$. And the
capacity $c(A,Z)$ is equal to $c[\vec{A^\ast}]$ since the (symmetric) set~$Z$
corresponds to $M^* \cup B_1^* \ldots B_k^*$ in $H^*$.

\medskip

In light of these observations, \refth{max_ibd_flow_min_odd_barrier} is a
consequence of the following Tutte's theorem. (For shorter proofs of this and
next theorems, see also~\cite{GK-04}.)
\begin{theorem}[\rm Max ISK-Flow Min Odd Barrier Theorem \cite{tut-67}]
\label{th:max_isk_flow_min_odd_barrier}
    For $G,c,s$ as above, the maximum ISK-flow value is equal to the minimum odd
    barrier capacity. An ISK-flow $f$ and an odd barrier
    $\calB = (A, M; B_1, \ldots,B_k)$ have maximum value and minimum capacity, respectively, if and only
    if the following hold:
    \begin{itemize}
        \item[\rm(i)] $f(A,A'\cup M) = c(A, A'\cup M)$ and $f(A'\cup M, A) = 0$;
        \item[\rm(ii)] for each $i = 1, \ldots, k$, either $f(A, B_i) = c(A, B_i) - 1$ and
        $f(B_i, A) = 0$, or $f(A, B_i) = c(A, B_i)$ and $f(B_i, A) = 1$.
    \end{itemize}
\end{theorem}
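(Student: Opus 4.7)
The plan is to mimic the classical max-flow min-cut argument, splitting the theorem into weak duality (max flow $\le$ min barrier) and a constructive strong duality that simultaneously yields the optimality characterization (i)--(ii).

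\emph{Weak duality.} Let $f$ be any feasible ISK-flow and $\calB=(A, M; B_1, \ldots, B_k)$ any odd barrier. Since $s \in A$ and $\div_f(v)=0$ for every other node of $A$, we would write $\val{f} = \sum_{v \in A} \div_f(v)$, which, by the partition property \refeq{sk_odd_barrier}(i), decomposes as
$$
  [f(A,A') - f(A',A)] + [f(A,M) - f(M,A)] + \sum_{i=1}^k [f(A,B_i) - f(B_i,A)].
$$
The first two brackets are at most $c(A,A')$ and $c(A,M)$ directly. The key ingredient is a parity sharpening for each $B_i$: using self-symmetry of $B_i$ (so arcs in $(A,B_i)$ are mate-paired with arcs in $(B_i,A')$), flow conservation within $B_i$ combined with \refeq{sk_odd_barrier}(iii)--(iv) (zero capacity between $B_i,B_j$ and between $B_i, M$), and the oddness of $c(A, B_i)$, we obtain $f(A,B_i) - f(B_i,A) \le c(A,B_i) - 1$. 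Summing yields $\val{f} \le c[\vec A] - k = c(\calB)$. This is the Tutte--Berge ``odd component'' estimate, translated to the SK-flow setting.

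\emph{Strong duality and characterization.} Take a maximum ISK-flow $f$ and examine the residual digraph $G_f$. Let $R$ be the set of nodes reachable from $s$ in $G_f$, and set $R':=\sigma(R)$. Put $A := R-R'$, $A' = R'-R$, and $M := VG - (R \cup R')$. Maximality of $f$ forces $s' \notin R$ (else a residual $s$--$s'$ walk and its mate produce, after regularization, an augmentation of $\val{f}$ by $2$), and the standard cut argument yields the saturation conditions of~(i). The remaining self-symmetric set $R \cap R'$ decomposes into ``symmetric components'' $B_1,\ldots,B_k$ --- the node sets of the weakly connected components of the subgraph of $G_f$ induced by $R \cap R'$ --- which by construction are cut off from $M$ and from each other, giving \refeq{sk_odd_barrier}(iii)--(iv). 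The oddness \refeq{sk_odd_barrier}(ii) is forced: if $c(A, B_i)$ were even, the parity of the $B_i$-flow together with residual reachability would provide a regular pair of augmenting walks, contradicting maximality. Reading off equalities throughout the weak-duality chain produces (i)--(ii) and $\val{f}=c(\calB)$.

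The main obstacle, in both halves of the proof, is the parity step inside each $B_i$. In weak duality, one must justify the $-1$ sharpening from self-symmetry and oddness; in strong duality, one must verify that at a maximum flow this sharpening is realized (the ``equality case'') and that $c(A, B_i)$ is indeed odd. This parity bookkeeping is the non-classical ingredient distinguishing the theorem from its directed analogue, and it is what drives the original arguments of Tutte \cite{tut-67} and Gabow--Karzanov \cite{GK-04}; the bidirected counterpart \refth{max_ibd_flow_min_odd_barrier} then transfers via the correspondence in \refssec{sk_graph}.
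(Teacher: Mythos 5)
The paper does not prove this theorem; it states it as Tutte's result \cite{tut-67} (pointing to \cite{GK-04} for a shorter proof) and uses it as a black box, so your sketch must be assessed on its own terms.

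The outline has the right shape, but the two steps that carry the theorem's actual substance are not established, and one of them is wrong as written. In weak duality, the $-1$ sharpening rests on the parity fact that $f(A,B_i)-f(B_i,A)$ is even for each self-symmetric $B_i$; you name it but do not prove it, and the paper itself only ever invokes it by citation (in the proof of \refth{canon_sk_barrier}, to \cite[Cor.~3.2]{GK-04}). In strong duality, you take $R$ to be the set of nodes \emph{plainly} reachable from $s$ in $G_f$. The correct object (\refth{sk_barrier_from_flow}) is $R_f$, the set reachable by \emph{$c_f$-regular} paths, i.e.\ paths whose bidirected image is a $c_f$-simple walk. Maximality of $f$ excludes $c_f$-regular $s$--$s'$ paths (\refth{sk_residual}), but it does \emph{not} exclude plain reachability of $s'$: your aside ``after regularization'' fails precisely when the residual walk uses an arc $a$ and its mate $a'$ both with residual capacity~$1$ --- the skew-symmetric incarnation of a blossom --- and then no augmentation exists. (This already occurs for the SK-flow encoding of a triangle at a maximum matching of size one: $s'$ is plainly reachable in $G_f$, yet $f$ is maximum.) With plain $R$ one can therefore have $s'\in R$, hence $s\in R\cap R'$ and $s\notin A=R-R'$, so the tuple fails to be an odd barrier at all; and even when $s'\notin R$ happens to hold, the arcs leaving $A$ need not be saturated. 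Replacing plain by $c_f$-regular reachability, and proving that its failure at $s'$ yields a barrier of matching capacity, is the heart of Tutte's theorem and cannot be waved through. (A small aside: the reference is Goldberg--Karzanov, not Gabow--Karzanov.)
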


\medskip

Next we establish additional correspondences. Consider an ISK-flow $f$ in $G$.
The \emph{residual} SK-graph $G_f$ endowed with the \emph{residual} capacities
$c_f \colon AG \to \Z_+$ is constructed in a standard fashion: $VG_f = VG$, and
the arcs of $G_f$ are:
\begin{numitem}
\label{eq:residual_arcs}
    \begin{itemize} \compact
        \item[(i)] each arc $a \in AG$ with $f(a) < c(a)$
        whose residual capacity is defined to be $c_f(a) := c(a) - f(a)$, and
        \item[(ii)]
        the \emph{reverse} arc $a^R = (v,u)$ to each arc $a = (u,v) \in AG$ with $f(a) > 0$;
        its residual capacity is $c_f(a^R) := f(a)$
    \end{itemize}
\end{numitem}
(cf.~\refeq{residual_edges}). A path $P$ in $G_f$ is called
\emph{$c_f$-regular} if $c_f(a)=c_f(a') \ge 2$ holds for each pair of arc mates
$a, a'$ occurring in $P$. (In other words, the bidirected image of $P$ in
$G^*_f$ is a $c_f$-simple walk.) If $P$ is a $c_f$-regular $s$--$s'$ path, we
can increase the value of $f$ by~2 (by sending one unit of flow along $P$ and
one unit of flow along $P'$). So the existence of such a $P$ implies the
non-maximality of $f$. A converse property is valid as well.
 \begin{theorem}[\rm \cite{tut-67}]  \label{th:sk_residual}
An ISK-flow $f$ is maximum if and only if there is no $c_f$-regular $s$--$s'$
path in $G_f$.
 \end{theorem}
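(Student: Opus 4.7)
The plan is to prove both directions, with the easy one being a direct augmentation argument and the hard one reducing to \refth{max_isk_flow_min_odd_barrier} by constructing a skew-symmetric odd barrier of capacity $\val{f}$.

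For the ``regular path exists $\Rightarrow f$ is not maximum'' direction, let $P$ be a $c_f$-regular $s$--$s'$ path in $G_f$ and let $P' := \sigma(P)$ be its symmetric image, which is again an $s$--$s'$ path since $\sigma(s)=s'$. Define $\tilde f$ by letting $\tilde f(a) - f(a)$ equal the number of times the arc $a$ appears as a forward arc in $P\cup P'$ minus the number of times its reverse $a^R$ appears there. The $c_f$-regularity, requiring $c_f(a)=c_f(a')\ge 2$ whenever a mate pair occurs in $P$, is exactly what keeps $\tilde f$ nonnegative and bounded by $c$; self-symmetry of $\tilde f$ follows from $P'=\sigma(P)$; divergence conservation at internal nodes is the usual telescoping. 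The value increases by~$2$, so $f$ is not maximum.

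For the converse direction, assume no $c_f$-regular $s$--$s'$ path exists. I would run a Tutte-style alternating-tree search from $s$ in $G_f$, incrementally growing the set $R$ of nodes reachable from $s$ by a $c_f$-regular path. Whenever a node~$v$ becomes reachable in two ``parities'' (that is, both $v$ and $v'$ are pulled into the same search branch), one uses the corresponding symmetric pair of arcs to collapse $v$, $v'$, and their interconnecting nodes into a single self-symmetric ``blossom'' $B_i$. At termination I set $A := R - \sigma(R)$, $M := VG - (R\cup\sigma(R))$, and take $B_1, \ldots, B_k$ to be the produced blossoms. Conditions \refeq{sk_odd_barrier}(i), (iii), (iv) are forced by the termination of the search: no $c_f$-positive arc may escape a maximal reachable structure. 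Condition \refeq{sk_odd_barrier}(ii), that $c(A,B_i)$ is odd, is a parity count on the arcs incident to $B_i$, and is precisely where non-existence of a $c_f$-regular $s$--$s'$ path enters: an even count would allow two tight boundary arcs, together with the internal regular connectivity of the blossom, to synthesize such a path, contradicting the hypothesis. The equality $c(\calB) = \val{f}$ then follows by a divergence calculation at~$A$ using the tight equalities that \refth{max_isk_flow_min_odd_barrier}(i),(ii) predict for a maximum-flow/minimum-barrier pair and which are built into the construction.

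The main obstacle is the blossom construction and the verification of the oddness condition in \refeq{sk_odd_barrier}(ii); this is the heart of Tutte's argument and is the skew-symmetric analogue of the blossom step in the Edmonds--Gallai decomposition. Once the barrier is secured, the remaining checks---validity of the augmented flow in the first direction, and the divergence accounting for $c(\calB) = \val{f}$ in the second---reduce to routine residual-graph bookkeeping.
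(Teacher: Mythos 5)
The paper does not prove this statement: it is cited verbatim from Tutte~\cite{tut-67}, with \cite{GK-04} pointed to for shorter proofs, so there is no in-paper argument to compare against. Your ``easy'' direction (augment simultaneously along $P$ and $P'=\sigma(P)$) is correct and mirrors the informal remark the paper makes immediately before the theorem; in particular you correctly identify that the condition $c_f(a)=c_f(a')\ge 2$ whenever both mates occur in $P$ is exactly the feasibility requirement for the double augmentation, and symmetry of the updated flow follows from $P'=\sigma(P)$.

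The converse direction, however, is a plan rather than a proof, and you say as much. You propose to construct an odd barrier of capacity $\val{f}$ via a Tutte-style blossom search from $s$, but the two essential steps are left unexecuted: (a) that the tuple $(A,M;B_1,\ldots,B_k)$ obtained from the search actually satisfies conditions (i), (iii), (iv) of \refeq{sk_odd_barrier}, i.e.\ that each $B_i$ is self-symmetric and that no positive-residual-capacity arcs leave the reachable structure other than those you account for; and (b) that $c(A,B_i)$ is odd. Your one-sentence gesture at (b) --- ``an even count would allow two tight boundary arcs \ldots to synthesize such a path'' --- is an intuition, not an argument; it is precisely the delicate parity/blossom reasoning at the heart of Tutte's theorem, which you explicitly flag as the main obstacle. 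There is also a mismatch to resolve: your blossoms arise dynamically during a search, whereas the companion result \refth{sk_barrier_from_flow} defines $B_1,\ldots,B_k$ statically as weakly connected components of the subgraph of $G$ (not $G_f$) induced by $R\cap R'$, and you would need to show these coincide or adapt the verification. As it stands, the hard direction is a genuine gap; the appropriate resolution, mirroring the paper, is to cite the result rather than attempt to reprove it.
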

This implies \refth{bd_residual} for IBD-flows.

Given a maximum ISK-flow $f$, a certain minimum odd barrier can be constructed
by considering the residual graph $G_f$. The construction described in the
proof of Theorem~3.5 in~\cite{GK-04} (relying on Lemma~2.2 in~\cite{GK-96}) is
as follows.
 \begin{theorem} \label{th:sk_barrier_from_flow}
Let $f$ be a maximum ISK-flow. Let $R = R_f$ be the set of nodes reachable from
$s$ by $c_f$-regular paths in $G_f$. Define $A := R - R'$ and $M := VG - (R
\cup R')$. Let $B_1, \ldots, B_k$ be the node sets of weakly connected
components of the subgraph $G$ induced by $R \cap R'$. Then $\calB_f := (A, M;
B_1, \ldots, B_k)$ is a minimum odd barrier. \qed
   \end{theorem}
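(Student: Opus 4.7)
The plan is to verify that $\calB_f = (A, M; B_1, \ldots, B_k)$ satisfies the four defining conditions of an odd barrier listed in~\refeq{sk_odd_barrier}, and then to show that its capacity equals $\val{f}$; the conclusion will then follow at once from \refth{max_isk_flow_min_odd_barrier}.

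First, I would check the partition condition~\refeq{sk_odd_barrier}(i). Since $R'' = R$, the sets $A = R - R'$, $\sigma(A) = R' - R$, $R \cap R'$, and $M = VG - (R \cup R')$ clearly partition $VG$. The trivial one-node path $(s)$ is $c_f$-regular, so $s \in R$; and $s \in R'$ would yield a $c_f$-regular $s$--$s'$ path, contradicting the maximality of $f$ by \refth{sk_residual}. Hence $s \in A$. Since $R \cap R'$ is self-symmetric, $\sigma$ permutes the weakly connected components of $G[R \cap R']$; to conclude that $\sigma(B_i) = B_i$ for each $i$, it suffices to show that for every $v \in R \cap R'$ the nodes $v$ and $v'$ lie in a common component. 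Given $c_f$-regular paths $P$ from $s$ to $v$ and $Q$ from $s$ to $v'$, the symmetric walk $\sigma(Q)$ runs from $v$ to $s'$ and is $c_f$-regular by symmetry of $f$ and $c$. If the concatenation $P \cdot \sigma(Q)$ were itself a $c_f$-regular $s$--$s'$ path, it would augment $f$ and contradict \refth{sk_residual}; hence it must exhibit an arc-mate ``collision'', which a careful inspection converts into an arc-mate pair lying inside $G[R \cap R']$ and linking $v$ with $v'$. This is the step I expect to be the main obstacle, and it is essentially the content of Lemma~2.2 in~\cite{GK-96}.

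Next I would verify conditions (ii)--(iv) together with the saturation patterns of \refth{max_isk_flow_min_odd_barrier} on the cut between $A$ and $VG \setminus A$. The guiding principle is that for any arc $a = (u, v)$ with $u \in R$, the presence of $a$ in $G_f$ (i.e.\ $f(a) < c(a)$) forces $v \in R$ as well, because extending a $c_f$-regular $s$--$u$ path by $a$ keeps it $c_f$-regular; and symmetrically via the reverse arcs of~\refeq{residual_arcs}(ii). Applied with $u \in A$ and $v \in M$, this yields $f(a) = c(a)$ and $f(a^R) = 0$, so $f(A, M) = c(A, M)$ and $f(M, A) = 0$. Applied with $u \in A$ and $v \in B_i$, it leaves at most one unit of residual capacity across $(A, B_i) \cup (B_i, A)$, which combined with the skew-symmetry of $f$ forces the oddness of $c(A, B_i)$ in~\refeq{sk_odd_barrier}(ii) and supplies the two alternatives in \refth{max_isk_flow_min_odd_barrier}(ii). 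Finally, a positive-capacity arc from $B_i$ to $M$ or between distinct $B_i, B_j$ would allow a $c_f$-regular $s$--$u$ path to be extended across a component boundary of $G[R \cap R']$, a contradiction; hence $c(B_i, M) = c(M, B_i) = 0$ and $c(B_i, B_j) = 0$ for $i \ne j$, giving~\refeq{sk_odd_barrier}(iii),(iv).

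To conclude, a direct capacity count closes the argument. Writing $\val{f} = \div_f(s) = \sum_{v \in A} \div_f(v)$ and splitting arcs leaving $A$ according as their head lies in $\sigma(A)$, $M$, or some $B_i$, the saturation patterns just established evaluate $\val{f}$ to $c(A, \sigma(A)) + c(A, M) + \sum_{i=1}^{k}\bigl(c(A, B_i) - 1\bigr) = c[\vec A\,] - k$, which equals $c(\calB_f)$ by~\refeq{sk_barrier_cap}. Hence $\val{f}$ attains the minimum odd barrier capacity, and \refth{max_isk_flow_min_odd_barrier} certifies $\calB_f$ as a minimum odd barrier.
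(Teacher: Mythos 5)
Your overall plan -- verify the four conditions of~\refeq{sk_odd_barrier}, show $c(\calB_f) = \val{f}$, and invoke \refth{max_isk_flow_min_odd_barrier} -- is exactly the standard route, and the paper itself does nothing more than point to Theorem~3.5 of~\cite{GK-04} and Lemma~2.2 of~\cite{GK-96}, which is the same line you cite. You also correctly locate the genuinely hard step (self-symmetry of the $B_i$'s). However, the argument as you sketch it has a real gap in the middle.

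Your ``guiding principle'' --- that for $u\in R$ and $a=(u,v)\in AG_f$ one may extend a $c_f$-regular $s$--$u$ path by $a$ and stay $c_f$-regular --- is not literally true: it fails if $a'$ already lies on the path and $c_f(a)=c_f(a')=1$. You are saved in the $(A,M)$ application because $a'$ has a tail in $M$ and a head in $A'$, neither of which is in $R$, so $a'$ cannot occur on a path whose nodes all lie in $R$; but this needs to be said, since the blanket statement is false. More seriously, the principle simply does not produce the conclusion you want for the $A$--$B_i$ boundary. There $v\in B_i\subseteq R$ already, so ``$v\in R$'' is vacuous and you get no information about $f(a)$ versus $c(a)$. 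The claim that the reachability argument ``leaves at most one unit of residual capacity across $(A,B_i)\cup(B_i,A)$'' does not follow from anything you have established. What is actually required is a different argument: every $c_f$-regular walk that reaches a node of $B_i$ from $A$ must enter $B_i$ (so at least one residual arc from $A$ into $B_i$ exists), and if two \emph{independent} residual arcs from $A$ into $B_i$ existed, then, using the self-symmetry of $B_i$ and the arc mate of one of them (which is a residual arc from $B_i$ into $A'$), one could splice a $c_f$-regular $s$--$s'$ augmenting path through $B_i$, contradicting \refth{sk_residual}. That augmentation step, together with the parity fact that $f\langle A,B_i\rangle$ is even for self-symmetric $B_i$ (see~\cite[Corollary~3.2]{GK-04}, used in the proof of \refth{canon_sk_barrier}), is what yields the one-unit residual, the oddness of $c[\vec A,B_i]$, and the two alternatives in \refth{max_isk_flow_min_odd_barrier}(ii). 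Your phrase ``combined with the skew-symmetry of $f$'' gestures at the parity fact but does not replace the augmentation argument. Once that is supplied, the remainder (the $(A,M)$ cut, $(B_i,M)=(B_i,B_j)=\emptyset$, and the capacity count) is correct as you wrote it.
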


This subset $R$ of nodes in $G$ corresponds to two sets $\vec R=\vec
R_{G^*,f^*}$ and $\bvec R=\bvec R_{G^*,f^*}$ in $G^*$ (defined just
before~\refth{bd_barrier_from_flow}; here $\Gamma=G^*$ and $g=f^*$). More
precisely, assuming that each node $v \in VG^*$ corresponds to node mates $v,
v'$ in $G$ (cf. \refssec{sk_graph}), one can realize that $\vec R$ (resp.
$\bvec R$) is the set of nodes $v\in VG^*$ such that $v\in R$ (resp. $v'\in
R$).

Finally, the last theorem in~\refssec{IBD-flows} (\refth{canon_bd_barrier}) is
implied by the following assertion.
  \begin{theorem} \label{th:canon_sk_barrier}
The sets $R_f$ in \refth{sk_barrier_from_flow} are equal for all maximum
ISK-flows~$f$. Therefore, the minimum odd barriers $\calB_f$ are equal as well.
  \end{theorem}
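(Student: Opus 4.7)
The plan is to prove $R_f = R_g$ for any two maximum ISK-flows $f,g$; once this is done, the equality $\calB_f = \calB_g$ is automatic, because in \refth{sk_barrier_from_flow} the components $A, M, B_1, \ldots, B_k$ are defined purely from $R$, $R'$, and the subgraph of $G$ induced by $R \cap R'$.

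By the symmetric roles of $f$ and $g$, I would first reduce to the inclusion $R_f \subseteq R_g$. So I would take $v \in R_f$ together with a $c_f$-regular $s$-$v$ path $P$ in $G_f$ and aim to build a $c_g$-regular $s$-$v$ walk in $G_g$; extracting a simple path from that walk would give $v \in R_g$.

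The key device would be the function $\delta := f - g$ on $AG$. It is symmetric, and since $\val{f} = \val{g}$ it has zero divergence everywhere, so it is a symmetric integer circulation. I would decompose it into elementary cycle flows grouped into mate pairs $\{C_j, C_j'\}$. These cycles account exactly for the differences between the residual graphs $G_f$ and $G_g$: an arc $a$ present in $G_f$ but absent in $G_g$ satisfies $\delta(a) > 0$ and lies on some cycle $C_j$, whose remaining arcs all lie in $G_g$. Walking along $P$, I would splice in, for each problematic arc, a detour along the complementary arcs of the cycle (or its mate) containing it. The result would be an $s$-$v$ walk in $G_g$, from which a simple subwalk $P'$ can be extracted.

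The hard part will be to verify that $P'$ is $c_g$-regular, i.e.\ that $c_g(a) \ge 2$ whenever $P'$ contains both an arc $a$ and its mate $a'$. Here the skew-symmetric nature of every ingredient is essential: $\delta$ is symmetric, its cycle decomposition is organized into mate pairs, and the rerouting can be performed symmetrically on arc mates. Combining the $c_f$-regularity of $P$ with the symmetry of the residual capacities and the pointwise bound $c_g(a) \ge c_f(a) - |\delta(a)|$ should yield the required inequalities after a case analysis. This step parallels the classical argument for uniqueness of the canonical source-side minimum cut in ordinary max-flow theory, with additional bookkeeping to track mate pairs, and is where I expect the bulk of the technical work.
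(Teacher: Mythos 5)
Your approach is genuinely different from the paper's, and the difference matters. The paper does not attempt any rerouting or cycle-splicing argument. Instead it fixes a maximum ISK-flow $f$ with $R_f$ \emph{inclusion-wise minimal}, forms $\calB_f = (A, M; B_1,\ldots,B_k)$, and then, for an arbitrary maximum ISK-flow $g$, expands $\val{g} = \sum_{v\in A}\div_g(v)$ into terms $g\langle A,A'\rangle$, $g\langle A,M\rangle$, $g\langle A,B_i\rangle$. The decisive ingredient is a \emph{parity fact} (from Tutte / [GK-04, Cor.~3.2]) that $g\langle A,B_i\rangle$ is even while $c(A,B_i)$ is odd, forcing $g\langle A,B_i\rangle \le c(A,B_i)-1$. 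Comparing with $\val{g}=c(\calB)$ makes every inequality tight, which pins down $g$ exactly on all arcs meeting $A$ and hence leaves in $G_g$ only the $k$ arcs $a_i\colon A\to B_i$ (each of residual capacity $1$, each paired with its mate $a_i'\colon B_i\to A'$ also of residual capacity $1$). A $c_g$-regular path thus cannot cross from a $B_i$ into $A'$ or reach $M$, so $R_g\subseteq R_f$, and minimality finishes. No explicit construction of residual paths for $g$ is needed.

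Your proposal, by contrast, mimics the classical splice-along-$(f-g)$-cycles argument from ordinary max-flow theory, and you correctly flag the sticking point: verifying $c_g$-regularity of the spliced path $P'$. This is not a cosmetic issue. Regularity requires $c_g(a)=c_g(a')\ge 2$ whenever \emph{both} $a$ and $a'$ occur in $P'$, and the detours you splice in can introduce new mate pairs that the original $c_f$-regular $P$ never touched, or force $P'$ through an arc pair of residual capacity exactly $1$. The pointwise bound $c_g(a)\ge c_f(a)-|\delta(a)|$ goes in the right direction but is far from giving $c_g(a)\ge 2$ where needed, and the symmetric organization of the cycle decomposition into mate pairs $\{C_j,C_j'\}$ does not by itself prevent the spliced walk from using both $a_i$ and $a_i'$ from a capacity-$1$ pair. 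This is precisely the obstruction that the paper avoids by never building such a walk; it infers $R_g\subseteq R_f$ from the tightness of the barrier inequalities and the parity constraint. As written, your proof has a genuine gap at its most delicate step, and I do not see how to close it without importing something essentially equivalent to the parity argument.
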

This fact can be extracted from reasonings in~\cite{GK-04}, yet it is not
formulated there explicitly. For this reason, we give a direct proof.

Let $f$ be a maximum ISK-flow such that the set $R_f$ is inclusion-wise minimal
and let $\calB_f := (A, M; B_1, \ldots, B_k)$. Consider another maximum
ISK-flow $g$ (if any). Then
   \begin{multline} \label{eq:canon_value}
   c[\vec A] - k = c(\calB) = \val{g} = \sum\nolimits_{v \in A} \div_{g}(v) \\
   = g\langle A,A'\rangle + g\langle A,M\rangle + g\langle A,B_1\rangle +
   \ldots + g\langle A,B_k\rangle,
  \end{multline}
where for disjoint subsets $X,Y\subset VG$, ~$g\langle X,Y\rangle$ denotes
$g(X,Y) - g(Y,X)$. For $i = 1, \ldots, k$, we have: $g(A, B_i) \le c(A, B_i)$;
~$c(A, B_i)$ is odd; and $g\langle A,B_i\rangle$ is even (the latter is due to
a result in~\cite{tut-67}; see also~\cite[Corollary~3.2]{GK-04}). Therefore,
$c(A,B_i) - g\langle A,B_i\rangle \ge 1$. Also $g\langle A,A'\rangle \le c(A,
A')$ and $g\langle A,M\rangle \le c(A,M)$. Comparing these relations
with~\refeq{canon_value}, we conclude that:
\begin{itemize} \compact
    \item[(i)] all arcs in $(A, A'\cup M)$ are saturated by $g$, while all arcs $a$ in
    $(A'\cup M,A)$ are free of $g$ (i.e. $g(a)=0$);

    \item[(ii)] for each $i$, ~$g\langle A,B_i\rangle = c(A,B_i) - 1$.
\end{itemize}

In terms of the residual graph $G_g$, (i) and (ii) mean that the sets $A$ and
$VG - A$ are connected in $G_g$ by exactly $k$ arcs $a_1,\ldots,a_k$, each
$a_i$ going from $A$ to $B_i$ and having the residual capacity~1. By symmetry,
$A'$ and $VG-A'$ are connected by only the arcs $a'_1, \ldots, a'_k$ (each
$a'_i$ goes from $B_i$ to $A'$, and $c_g(a_i) = 1$). Also by
\refeq{sk_odd_barrier}(iii),(iv), no arc in $G_g$ connects different sets among
$M, B_1, \ldots, B_k$. Therefore, the set $R_g$ of nodes in $G_g$ reachable
from $s$ by $c_g$-regular paths is contained in $R_f$. By the minimality of
$R_f$, we have $R_f = R_g$, as required. \qed

\bigskip
\textbf{Acknowledgements.} We thank the anonymous referee for useful
suggestions.

\end{document}